\documentclass[11pt,a4paper]{amsart}
\usepackage[margin=3cm]{geometry}
\usepackage{amsmath,amssymb,amsthm,amsfonts}
\usepackage{enumitem}
\usepackage{xcolor}
\usepackage{comment}
\usepackage{todonotes}
\usepackage{microtype}
\usepackage{hyperref}
\usepackage{booktabs}
\usepackage{parskip}
\usepackage{graphicx}

\theoremstyle{plain}
\newtheorem{theorem}{Theorem}[section]
\newtheorem{lemma}[theorem]{Lemma}
\newtheorem{corollary}[theorem]{Corollary}
\newtheorem{proposition}[theorem]{Proposition}

\theoremstyle{definition}
\newtheorem{example}[theorem]{Example}
\newtheorem{definition}[theorem]{Definition}
\newtheorem{notation}[theorem]{Notation}

\theoremstyle{remark}
\newtheorem{remark}[theorem]{Remark}

\newcommand{\subsectionstart}{\leavevmode\vadjust{\vspace{0.5em}}\noindent}

\newcommand{\C}{\mathbb{C}}
\newcommand{\R}{\mathbb{R}}
\newcommand{\RP}{\mathbb{RP}}
\newcommand{\CP}{\mathbb{CP}}

\newcommand{\mA}{\mathcal{A}}
\newcommand{\mB}{\mathcal{B}}
\newcommand{\mG}{\mathcal{G}}
\newcommand{\mH}{\mathcal{H}}
\newcommand{\mF}{\mathcal{F}}

\newcommand{\mS}{\mathcal{S}}
\newcommand{\mT}{\mathcal{T}}
\newcommand{\mV}{\mathcal{V}}

\DeclareMathOperator{\Id}{Id}
\DeclareMathOperator{\tr}{tr}

\DeclareMathOperator{\GL}{GL}
\DeclareMathOperator{\Span}{span}

\DeclareMathOperator{\Sym}{Sym}
\DeclareMathOperator{\relin}{relin}

\newcommand{\inn}{(\cdot , \cdot)}

\newcommand{\msv}{\mathcal{M}_{[\mathcal{A}]}}

\title[\(\vee\)-systems and PK arrangements]{Euclidean $\vee$-systems and real PK arrangements}

\author{M. de Borbon}
\address{Loughborough University, Loughborough LE11 3TU, UK}
\email{m.de-borbon@lboro.ac.uk}

\author{D. Panov}
\address{King's College London,
Strand London WC2R 2LS, UK}
\email{dmitri.panov@kcl.ac.uk}

\author{A.P. Veselov}
\address{Loughborough University, Loughborough LE11 3TU, UK}
\email{A.P.Veselov@lboro.ac.uk}

\begin{document}

\begin{abstract}
We establish a correspondence between two structures arising in the geometry of hyperplane arrangements:
Euclidean \(\vee\)-systems and real polyhedral K\"ahler (PK) arrangements. 
We prove that every irreducible Euclidean \(\vee\)-system determines a real PK arrangement, and conversely that every real PK arrangement arises this way. 
As a consequence, we show that, up to equivalence, there are exactly three irreducible rank-three Euclidean \(\vee\)-systems whose vectors have equal length; their arrangements are the mirrors of the reflection groups of the regular tetrahedron, cube, and icosahedron.
The correspondence also yields a description of the moduli space of Euclidean \(\vee\)-systems in a fixed projective class: it is homeomorphic to the relative interior of a polytope.
We also give a direct proof that the hyperplane arrangement associated with a Euclidean \(\vee\)-system is simplicial. 
Among the currently known simplicial line arrangements, we identify precisely those that arise from \(\vee\)-systems.
As a consequence, we prove that the Schreiber--Veselov catalog is complete for irreducible rank-three Euclidean \(\vee\)-systems with at most \(27\) vectors.
\end{abstract}

\maketitle

\section{Introduction}

A major development in differential geometry of 1990s was the creation of the theory of Frobenius manifolds initiated by Dubrovin \cite{D1}. It was inspired by the work of theoretical physicists Witten--Dijkgraaf--Verlinde--Verlinde \cite{DVV} and unified many areas of mathematics and mathematical physics from topological quantum field theory to algebraic and enumerative geometry, singularity theory and integrable systems. 

The \emph{associativity equations} (known also as {\it WDVV equations}) play a central role in the theory of Frobenius manifolds. Remarkably, similar equations appeared also in Seiberg-Witten work on $N=2$ SUSY Yang-Mills theory, which led to the extension of the original Dubrovin's framework \cite{MMM}.

The notion of $\vee$-systems was introduced by one of the authors \cite{veselov1} in relation with a special class of solutions of the generalized WDVV equations. These systems are certain finite collections of covectors, which can be viewed as far-reaching generalizations of the Coxeter root systems (see Section \ref{sec:vsystems}). They were studied in more detail in \cite{CV,FV1,feigin-veselov-geometryVsystem,feigin-veselov-holonomyliealgebras,schreiberveselov}, 
but the classification of $\vee$-systems remained one of the most challenging problems in this area. In this paper we make an important progress in this direction, which we now explain.

We introduce a geometric approach to the classification problem for Euclidean \(\vee\)-systems. More precisely, we relate the real \(\vee\)-systems to the theory of polyhedral K\"ahler manifolds \cite{panov, dbp-cpn} by establishing a precise correspondence between irreducible Euclidean \(\vee\)-systems and real PK arrangements. The main conceptual input is the Hirzebruch quadratic form, studied first for line arrangements in \cite{hirzebruch,panov} and later defined for hyperplane arrangements in arbitrary dimension in \cite{dbp-miyaokayau}. We adapt the variational approach of \cite{dbp-hirzebruch}, based on moment maps and the Kempf--Ness theorem, to the real setting and relate the vanishing of the Hirzebruch quadratic form to the existence of \(\vee\)-systems.

As a result, we obtain a new existence and uniqueness mechanism for Euclidean \(\vee\)-systems. This method also describes all possible solutions in a fixed projective class: the moduli space is the intersection of the relative interior of the matroid base polytope with the kernel of the Hirzebruch quadratic form. Consequently, the existence and classification of Euclidean \(\vee\)-systems can be expressed in terms of combinatorial data attached to hyperplane arrangements.

We also prove a structural property of Euclidean \(\vee\)-systems: their associated hyperplane arrangements are simplicial. 
Using the correspondence with PK arrangements 
we show that the Schreiber--Veselov rank-three catalog \cite{schreiberveselov} is complete among all known simplicial line arrangements.
Combining this together with the classification of simplicial arrangements with at most \(27\) lines \cite{cuntz}, we prove that the Schreiber--Veselov catalog is complete for irreducible Euclidean \(\vee\)-systems with at most \(27\) vectors. 


\subsection{Main results}

Our first two main results provide a correspondence between Euclidean \(\vee\)-systems and real PK arrangements, up to natural equivalences. 

First, we show that an irreducible Euclidean \(\vee\)-system defines a real PK arrangement. 

\begin{theorem}\label{thm:fromVtoPK}
    Let \(\mA = \{\alpha_i\}_{i=1}^n\) be an irreducible Euclidean \(\vee\)-system in \(\R^d\) with \(d \geq 2\).
    Let \((\Delta_{\mA}, r_\mA)\) with \(\Delta_{\mA} = \{\Delta_1, \ldots, \Delta_n\}\) and \(r_{\mA} = (r_1, \ldots, r_n)\)  be the associated weighted arrangement consisting of hyperplanes \(\Delta_i = \alpha_i^{\perp}\) and weights  \(r_i = |\alpha_i|^2\).
    Then  \((\Delta_{\mA}, r_{\mA})\) is a real PK arrangement.
\end{theorem}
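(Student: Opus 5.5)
The plan is to check the defining conditions of a real PK arrangement one at a time, directly from the $\vee$-conditions. I take the definition recalled earlier to consist of: positivity of weights; a local "flat connection" condition at each flat, in particular at codimension-two flats; a global normalization; and the accompanying nondegeneracy and inequality conditions on the weights of flats. Throughout, write $\pi_i = \alpha_i\otimes\alpha_i/|\alpha_i|^2$ for the orthogonal projection onto $\Delta_i^{\perp}=\R\alpha_i$, so that $r_i\pi_i=\alpha_i\otimes\alpha_i$.

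\emph{Step 1 (global condition).} The Euclidean hypothesis says $G_{\mA}=\sum_i \alpha_i\otimes\alpha_i = c\,\Id$ with $c>0$. In the language of the arrangement this reads $\sum_i r_i\pi_i=c\,\Id$. This is the global normalization of the connection $\nabla = d-\frac1c\sum_i r_i\,\frac{d\alpha_i}{\alpha_i}\otimes\pi_i$. After rescaling the weights by $1/c$, or equivalently working projectively, it is the required global identity. Positivity $r_i>0$ is automatic.

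\emph{Step 2 (codimension two).} Let $L$ be a codimension-two flat and $\Pi=L^{\perp}$. The $\vee$-condition on $\Pi$ says that $\sum_{\alpha_i\in\Pi}\alpha_i\otimes\alpha_i$, restricted to $\Pi$, is proportional to the restriction of $G_\mA$, which is $c\,\Id_\Pi$. Hence
\[
\sum_{\Delta_i\supset L} r_i\pi_i = c_L\, \mathrm{pr}_{\Pi},
\]
where $\mathrm{pr}_\Pi$ is the orthogonal projection onto $\Pi$. This operator commutes with each $\pi_i$ for $\Delta_i\supset L$. That is exactly the local commutation (flatness/torsion-freeness) condition in the definition of a PK arrangement, and $c_L>0$ gives the positive weight of $L$. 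If $L$ lies in a single hyperplane only, the condition is vacuous.

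\emph{Step 3 (higher-codimension flats and inequalities).} For a general flat $L$ with $U=L^\perp$, I invoke the known fact (Feigin--Veselov) that the restriction $\mA\cap U$ of a $\vee$-system to a subspace spanned by its vectors is again a $\vee$-system. Its form is then a positive multiple of $\mathrm{pr}_U$ on each irreducible component. It remains to derive the strict inequalities, of the type "weight of $L$ is less than the appropriate fraction of the total" (equivalently, cone angles in the admissible range). For this I compare traces: $\operatorname{tr}\sum_{\Delta_i\supset L} r_i\pi_i$ versus $c\dim U$. Irreducibility of $\mA$ should force strict inequality for proper flats, since equality would split $\R^d=U\oplus U^\perp$ with $\mA\subset U\cup U^\perp$.

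I expect Step 3 to be the main obstacle: both getting exactly the normalization and strict inequalities demanded by the definition, and checking that irreducibility is what rules out the boundary cases. I would first try the trace argument above, using $\sum_{\alpha\notin U}\alpha\otimes\alpha$ restricted to $U$ together with the $\vee$-conditions on the 2-planes spanned by pairs in $U$ and outside $U$.
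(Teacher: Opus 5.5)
There is a genuine gap, and it comes from working with the wrong definition. In this paper a real PK arrangement is a pair $(\mH,w)$ with $w_i\in(0,1)$ satisfying three conditions:
\begin{itemize}
\item[(PK1)] $\sum_i w_i=d$;
\item[(PK2)] $\sum_{L\subset H_i}w_i<\operatorname{codim}L$ for every nonzero proper flat $L$;
\item[(PK3)] $Q(w)=0$, where $Q$ is the Hirzebruch quadratic form.
\end{itemize}
No connection or commutation condition appears.

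Your Steps 1 and 3 essentially reach (PK1) and (PK2), and there they match the paper's argument. By (V1) the constant is $c=1$. You may not rescale anyway, since the statement fixes the weights $r_{\mA}$. Taking traces then gives $\sum_i r_i=d$, which is (PK1). For (PK2), set $U=L^\perp$; the trace comparison needs only the tight-frame property:
\[
\dim U=\tr(P_U)=\sum_{\alpha_i\in U}r_i+\sum_{\alpha_j\notin U}|P_U\alpha_j|^2 .
\]
Equality in $\sum_{\alpha_i\in U}r_i\le\dim U$ therefore forces $\mA\subset U\cup U^\perp$, which irreducibility forbids. This is the paper's stability inequality (Proposition~\ref{prop:stabineq}). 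Applied with $L=\Delta_i$, it also gives $r_i<1$, which you did not address. Neither the Feigin--Veselov restriction theorem nor the $\vee$-conditions on $2$-planes is needed here, so Step 3 is not the real obstacle.

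The real obstacle is (PK3), which your plan never touches. Step 2 only restates (V2)--(V3) as a commutation property and says nothing about $Q(r_{\mA})$. The paper's key input is the following identity, valid for any normalized tight frame:
\[
-2\,Q(r_1,\ldots,r_n)=\sum_{(i,j)\in R}(\alpha_i,\alpha_j)^2+\sum_{\Pi\in\mF}\bigl|A_{\Pi}-\nu(\Pi)\Id_{\Pi}\bigr|^2 .
\]
It is obtained by splitting the frame-potential equality $\sum_{i,j}(\alpha_i,\alpha_j)^2=\frac1d\bigl(\sum_i r_i\bigr)^2$ over the $2$-planes spanned by pairs of vectors. Conditions (V2)--(V3) then make the right-hand side vanish.

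Alternatively, you can close the gap directly from your Step 2 data:
\begin{itemize}
\item Fix $i$ and expand $r_i=\sum_j(\alpha_i,\alpha_j)^2$ over the planes through $\alpha_i$.
\item Use $(\alpha_i,\alpha_j)=0$ for $(i,j)\in R$, and $\sum_{\alpha_j\in\Pi}(\alpha_i,\alpha_j)^2=\nu(\Pi)\,r_i$ with $\nu(\Pi)=\frac12\sum_{\alpha_j\in\Pi}r_j$.
\item This gives $\sum_{\Pi\ni\alpha_i}\nu(\Pi)=1+(\sigma_i-1)r_i$.
\item Combined with $\sum_j r_j=d$, it yields $(Br)_i=d-2$ for the matrix $B$ of Definition~\ref{def:Bmatrix}.
\item Hence $r\in\ker Q$ by \eqref{eq:Q-B}, and so $Q(r)=0$.
\end{itemize}
Some computation of this kind, linking the $\vee$-conditions to the Hirzebruch form, is the heart of the theorem, and it is missing from your plan.
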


Conversely, we show that every real PK arrangement comes from an irreducible Euclidean \(\vee\)-system.

\begin{theorem}\label{thm:fromPKtoV}
    Let \((\mH, w)\) be a real PK arrangement in \(\R^d\) with \(d \geq 2\). Then there exists an irreducible Euclidean \(\vee\)-system \(\mA\) such that the associated weighted arrangement \((\Delta_{\mA}, r_{\mA})\) satisfies the following properties: \textup{(i)} \(\Delta_{\mA}\) is linearly equivalent to \(\mH\); and \textup{(ii)} \(r_{\mA} = w\). Moreover, \(\mA\) is unique up to orthogonal projective equivalence.
\end{theorem}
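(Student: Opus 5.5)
The plan is to build the Euclidean structure directly. Fix linear forms \(\ell_H\) defining the hyperplanes \(H \in \mH\). We look for a positive definite inner product \(B\) on \(\R^d\) such that the covectors \(\alpha_H\), taken proportional to \(\ell_H\) and normalised by \(|\alpha_H|_B^2 = w_H\), form a \(\vee\)-system with respect to \(B\). Then \(\Delta_{\mA}\) is \(\mH\) by construction, which gives (i), and \(r_{\mA} = w\) holds by the normalisation, which gives (ii).

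The \(\vee\)-conditions split into two parts, and I would treat them separately.
\begin{itemize}[leftmargin=*]
\item \textbf{Global condition.} The form \(G_{\mA} = \sum_H \alpha_H \otimes \alpha_H\) must be proportional to \(B\). Writing \(\alpha_H = \sqrt{w_H}\,\ell_H/|\ell_H|_B\), this reads
\[
\sum_{H} w_H \frac{\ell_H \otimes \ell_H}{B^{-1}(\ell_H,\ell_H)} = c\, B .
\]
This is exactly the zero-level condition of a moment map for the \(SL(d,\R)\)-action on the weighted configuration of lines \([\ell_H]\).
\item \textbf{Local conditions.} On each two-dimensional subspace spanned by at least two of the \(\alpha\)'s, the restricted sum must be either zero or proportional to \(B\) there.
\end{itemize}

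For the global condition I would adapt the real Kempf--Ness argument the paper takes from \cite{dbp-hirzebruch}. Consider the functional
\[
F(B) = \sum_H w_H \log B^{-1}(\ell_H,\ell_H) - \tfrac{\sum w_H}{d}\log\det B^{-1}
\]
on the symmetric space of positive definite forms modulo scaling. It is geodesically convex, and its critical points are precisely the solutions of the global condition.

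A minimum exists if and only if \(F\) is proper. By the standard Kempf--Ness / Hilbert--Mumford criterion, properness holds if and only if, for every proper subspace \(L\), the weight of hyperplanes containing \(L\) is strictly below the proportional share \(\tfrac{\operatorname{codim} L}{d}\sum w_H\). In other words, the weight vector \(w\) must lie in the relative interior of the matroid base polytope, and this interior version also uses irreducibility of \(\mH\). I expect this stability inequality to be among the defining properties of a real PK arrangement, or to follow from them via positivity of the local weights at flats. Strict convexity along non-trivial geodesics, which again needs irreducibility, then makes the minimiser unique up to scaling. This uniqueness gives the final uniqueness assertion up to orthogonal projective equivalence.

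For the local conditions I would argue inductively over flats. A codimension-two flat \(L\) of \(\mH\) corresponds to the two-plane \(L^\perp\) spanned by the \(\alpha_H\) with \(H \supset L\). The PK condition at \(L\) says that the induced weighted arrangement of lines in \(\R^d/L\) is again PK, so it is either reducible or a stable weighted configuration of at least three lines.
\begin{itemize}[leftmargin=*]
\item \textbf{Reducible case.} It has exactly two lines, and the \(\vee\)-condition asks that the two \(\alpha\)'s be \(B\)-orthogonal.
\item \textbf{Irreducible case.} The restricted form \(\sum_{H\supset L}\alpha_H\otimes\alpha_H\) must be proportional to \(B|_{L^\perp}\).
\end{itemize}

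The main obstacle is that \(B\) comes from a global minimisation, and it is not a priori clear that it satisfies these local conditions. To get them, I would use the vanishing of the Hirzebruch quadratic form, which should be part of, or a consequence of, the PK condition. The idea is to evaluate the second variation, or a suitable identity, of \(F\) at the minimiser in the directions supported on \(L^\perp\). Summing the local defects over all codimension-two flats should reproduce the Hirzebruch quadratic form evaluated at \(w\), expressed as a sum of non-negative terms, each vanishing exactly when the local condition at \(L\) holds. Vanishing of the form would then force every local defect to be zero.

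Finally, irreducibility of the resulting \(\vee\)-system follows from irreducibility of \(\mH\).
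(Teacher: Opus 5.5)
Your overall strategy is the paper's. The global condition is Barthe's theorem; your $F$ is the Kempf--Ness form of the convex minimisation behind it. Its strict stability hypothesis on nonzero proper flats is literally \textup{(PK2)}, given \textup{(PK1)}. Strict convexity modulo scaling gives the uniqueness the paper quotes as Proposition~\ref{prop:uniqueness}. You assume irreducibility of $\mH$ without proof, but it follows in one line. If the normals split along a nontrivial $\R^d=P\oplus Q$, then $P^\perp$ and $Q^\perp$ are flats. Applying \textup{(PK2)} to them gives total weight $<\dim P+\dim Q=d$, contradicting \textup{(PK1)}.

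\textbf{The gap is the local step.} This is the only place \textup{(PK3)} enters, and you leave it as something that \emph{should} hold. The tool you suggest, the second variation of $F$, does not produce it. In a direction supported on a plane, the Hessian at the minimiser involves the projections of \emph{all} vectors onto that plane, not the frame operator of the vectors lying in it.

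What you need is a first-order identity valid for every normalized tight frame $\mA$ (in the $B$-Euclidean structure). Let $R$ be the ordered pairs $(i,j)$ for which $\alpha_i,\alpha_j$ are the only vectors of $\mA$ in their span. Let $\mF$ be the $2$-planes containing at least three vectors. Set $A_\Pi=\sum_{\alpha_i\in\Pi}\alpha_i\otimes\alpha_i$ and $\nu(\Pi)=\tfrac12\tr A_\Pi$. The identity is
\[
-2\,Q\bigl(|\alpha_1|^2,\ldots,|\alpha_n|^2\bigr)=\sum_{(i,j)\in R}(\alpha_i,\alpha_j)^2+\sum_{\Pi\in\mF}\bigl|A_\Pi-\nu(\Pi)\Id_\Pi\bigr|^2 .
\]
The proof goes as follows.
\begin{enumerate}
\item Tightness is exactly equality in the frame-potential bound, $\sum_{i,j}(\alpha_i,\alpha_j)^2=\tr(A^2)=\frac1d\bigl(\sum_i|\alpha_i|^2\bigr)^2$; this is the vanishing of the moment map.
\item Because the $\alpha_i$ are pairwise non-proportional, each pair $i\neq j$ spans a unique plane. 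So the left side splits as $\sum_{R}(\alpha_i,\alpha_j)^2+\sum_{\Pi\in\mF}\sum_{\alpha_i,\alpha_j\in\Pi}(\alpha_i,\alpha_j)^2-\sum_i(\sigma_i-1)|\alpha_i|^4$. Here $\sigma_i$ is the number of planes of $\mF$ containing $\alpha_i$, and the last sum corrects the diagonal terms, which the middle sum counts $\sigma_i$ times.
\item On each $\Pi\in\mF$, the scalar/trace-free splitting gives $\sum_{\alpha_i,\alpha_j\in\Pi}(\alpha_i,\alpha_j)^2=|A_\Pi-\nu(\Pi)\Id_\Pi|^2+2\nu(\Pi)^2$. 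Moreover $\nu(\Pi)=w_S$ for $S=\Pi^\perp\in\mG$.
\item Rearranging produces exactly the three terms of the Hirzebruch form.
\end{enumerate}
Then $Q(w)=0$ forces $(\alpha_i,\alpha_j)=0$ on $R$ and $A_\Pi=\nu(\Pi)\Id_\Pi$ on $\mF$, which are \textup{(V3)} and \textup{(V2)}. Without this computation, your proposal does not show that the Barthe frame is a $\vee$-system.

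A smaller point: the weighted arrangement induced at a codimension-two flat is not ``again PK''. By \textup{(PK2)} its weights sum to less than $2$, so \textup{(PK1)} fails in rank two. Local stability is a consequence of the theorem, not a hypothesis. You never use this claim, so drop it, together with the induction over flats, which is not needed.
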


The above two theorems combined together give us the following.

\begin{corollary}\label{cor:bijection}
    The map \(\mA \mapsto (\Delta_{\mA}, r_{\mA})\) defines a bijection between irreducible Euclidean \(\vee\)-systems up to orthogonal projective equivalence \textup{(Definition~\ref{def:orthogonally-proj-equiv})} and real PK arrangements up to linear equivalence \textup{(Definition~\ref{def:linear-equivalence})}.
\end{corollary}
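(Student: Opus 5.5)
The plan is to derive the corollary formally from Theorems~\ref{thm:fromVtoPK} and \ref{thm:fromPKtoV}. The work splits into three parts: the map is well defined on equivalence classes, it is surjective, and it is injective.

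\emph{Well-definedness.} Theorem~\ref{thm:fromVtoPK} shows that \((\Delta_{\mA}, r_{\mA})\) is a real PK arrangement. It then suffices to check that orthogonally projectively equivalent \(\vee\)-systems yield linearly equivalent weighted arrangements. This requires unwinding Definitions~\ref{def:orthogonally-proj-equiv} and \ref{def:linear-equivalence}. I expect orthogonal projective equivalence to be generated by three operations: an orthogonal transformation \(g\in O(d)\), sign changes \(\alpha_i\mapsto -\alpha_i\), and possibly a global rescaling of the underlying form or vectors.
\begin{itemize}
\item Under \(g\), the hyperplanes \(\alpha_i^\perp\) move to \(g(\alpha_i^\perp)\), which is a linear image, and the weights \(|g\alpha_i|^2=|\alpha_i|^2\) are unchanged.
\item Sign changes fix both the hyperplanes and the weights.
\item A global rescaling \(\alpha_i\mapsto c\,\alpha_i\) multiplies all weights by \(c^2\).
\end{itemize}
The last case must therefore be compatible with the definition of linear equivalence. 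I expect it is, since weights of PK arrangements are presumably only meaningful up to a common positive scale, or since the \(\vee\)-condition fixes the normalisation. If the definitions carry no scaling, this case simply does not occur.

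\emph{Surjectivity.} Given a real PK arrangement \((\mH,w)\), Theorem~\ref{thm:fromPKtoV} produces an irreducible Euclidean \(\vee\)-system \(\mA\) with \(\Delta_{\mA}\) linearly equivalent to \(\mH\) and \(r_{\mA}=w\). Hence \((\Delta_{\mA},r_{\mA})\) is linearly equivalent to \((\mH,w)\) as a weighted arrangement. The linear map matches the hyperplanes with the same indexing, so the weights correspond.

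\emph{Injectivity.} Suppose \(\mA\) and \(\mB\) are irreducible Euclidean \(\vee\)-systems with \((\Delta_{\mA},r_{\mA})\) linearly equivalent to \((\Delta_{\mB},r_{\mB})\), say via \(L\in\GL(d)\) with \(L(\Delta^{\mA}_i)=\Delta^{\mB}_i\) and \(r^{\mA}_i=r^{\mB}_i\). Set \((\mH,w)=(\Delta_{\mB},r_{\mB})\), which is a real PK arrangement by Theorem~\ref{thm:fromVtoPK}.
\begin{itemize}
\item \(\mB\) satisfies (i) and (ii) of Theorem~\ref{thm:fromPKtoV} trivially, taking the identity as the linear equivalence.
\item \(\mA\) also satisfies (i) and (ii), via \(L\) and the equality of weights.
\end{itemize}
The uniqueness clause of Theorem~\ref{thm:fromPKtoV} then gives that \(\mA\) and \(\mB\) are orthogonally projectively equivalent.

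The main obstacle is bookkeeping rather than mathematics. The uniqueness statement in Theorem~\ref{thm:fromPKtoV} must apply to \emph{any} \(\vee\)-system satisfying (i) and (ii), not only the one constructed. The equivalence relations must also be set up so that the first step works. In particular, linear equivalence has to respect the labelling of hyperplanes, or at least be compatible with a relabelling of \(\mA\), which orthogonal projective equivalence should likewise allow. I would check these compatibilities directly against the two definitions.
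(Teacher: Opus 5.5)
Your proposal is correct and follows the paper's proof: well-definedness comes from Theorem~\ref{thm:fromVtoPK} and the definitions, surjectivity from the existence part of Theorem~\ref{thm:fromPKtoV}, and injectivity from noting that both systems satisfy (i) and (ii) for the same PK arrangement and then applying the uniqueness clause, whose proof in the paper covers any \(\vee\)-system with those properties. The loose end you flag does not arise, because Definition~\ref{def:orthogonally-proj-equiv} only allows \(\beta_i=\pm G(\alpha_i)\) with \(G\in \mathrm{O}(d)\), so there is no global rescaling; your guess that PK weights are meaningful only up to scale is also wrong, since they are normalized by \ref{it:PK1}, just as \ref{it:V1} normalizes the \(\vee\)-system.
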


From the above correspondence, combined with \cite{panov-real-hir}, we obtain the following classification result for rank-three \(\vee\)-systems.

\begin{corollary}\label{cor:classification}
Up to equivalence, there exist exactly \(3\) irreducible Euclidean \(\vee\)-systems of rank \(3\) whose vectors have the same length. The corresponding arrangements are the mirrors of the reflection groups of the regular tetrahedron, cube, and icosahedron.
\end{corollary}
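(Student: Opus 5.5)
By Corollary~\ref{cor:bijection}, the plan is to turn the classification of equal-length rank-three \(\vee\)-systems into a classification of real PK line arrangements with constant weights. After that, the main input is the result of \cite{panov-real-hir} on real line arrangements.

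First we fix the meaning of ``equivalence'' as orthogonal projective equivalence. Let \(\mA\) be an irreducible Euclidean \(\vee\)-system in \(\R^3\) with \(|\alpha_i|^2=c\) for all \(i\). By Theorem~\ref{thm:fromVtoPK}, \((\Delta_\mA, r_\mA)\) is a real PK arrangement of planes in \(\R^3\), that is, a real line arrangement in \(\RP^2\), with all weights equal to \(c\). Scaling is part of orthogonal projective equivalence, so we may normalise the weights. The real PK condition should be insensitive to a common rescaling of the weights, or else it fixes the scale through a normalisation such as \(\sum r_i = d\); I would check this against the definition. In either case we reduce to PK arrangements of lines whose weights are all equal.

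Next comes the key step, and the main obstacle. Real PK line arrangements with equal weights are precisely those whose Hirzebruch quadratic form vanishes at the constant vector in a suitable sense, and their local conditions say that every multiple point is compatible with equal weights. I would invoke \cite{panov-real-hir}, which, as the paper indicates, classifies real line arrangements that are PK with equal weights. I expect its output to be exactly the three reflection arrangements \(A_3\), \(B_3\), \(H_3\): the mirrors of the tetrahedral, octahedral and icosahedral groups, with \(6\), \(9\) and \(15\) lines. The difficulty is to match the hypotheses of that reference with the paper's definition of real PK arrangement, for instance genericity at double points and irreducibility. If that reference instead gives a statement about arrangements satisfying the equality case of a Hirzebruch-type inequality, I would first show that equal-weight PK arrangements satisfy that equality.

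Conversely, each of the three Coxeter root systems \(A_3\), \(B_3\) with roots normalised to equal length, and \(H_3\) is an irreducible Euclidean \(\vee\)-system with equal lengths. For \(B_3\) we take the mirrors with appropriately rescaled roots, since \(\vee\)-systems allow arbitrary scaling of the individual covectors.

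Finally, uniqueness. By Theorem~\ref{thm:fromPKtoV}, a PK arrangement together with its weights determines \(\mA\) up to orthogonal projective equivalence. The three arrangements are pairwise non-equivalent because they have different numbers of lines. Hence there are exactly three classes.
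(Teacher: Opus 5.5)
Your route is the paper's: Theorem~\ref{thm:fromVtoPK} gives a real PK line arrangement with constant weights, \cite[Theorem~1.1]{panov-real-hir} identifies it, and the uniqueness in Theorem~\ref{thm:fromPKtoV} (equivalently Corollary~\ref{cor:bijection}) finishes.

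\textbf{The open bridge.} The step you call the main obstacle is left unresolved, and it is exactly the hypothesis of the cited theorem. That theorem concerns arrangements with the \emph{Hirzebruch property}: \(n\) lines, each meeting the others in exactly \(n/3+1\) points. The paper imports this from \cite[Section~5.4]{panov}, but with tools already in the paper it takes one line.

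\begin{itemize}
\item The common weight is not fixed by rescaling. Orthogonal projective equivalence \(\beta_i=\pm G(\alpha_i)\), \(G\in\mathrm{O}(3)\), involves no scaling. The weight is forced by (V1) and Lemma~\ref{lem:traceformula}, which give \(\sum_i r_i=3\), so \(r_i=3/n\).
\item Proposition~\ref{prop:pk3-kernel} and Lemma~\ref{lem:Q-B} then give \(Br=\mathbf{1}\). Its \(i\)-th row reads \(\frac{3}{n}(\sigma_i-1+\delta_i)=1\), where \(\delta_i\) is the number of double points on \(L_i\). Hence \(L_i\) meets the other lines in \(\sigma_i+\delta_i=n/3+1\) points.
\item Three lines in general position also have this property. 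They are excluded here because the weights would be \(1\notin(0,1)\); the system would be an orthonormal basis, which is reducible.
\end{itemize}

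\textbf{The \(B_3\) justification is false.} \(\vee\)-systems do not tolerate arbitrary rescaling of individual covectors. Suppose every rescaling \(\{\mu_i\alpha_i\}\) of a \(\vee\)-system were again one, in the \(\GL\)-invariant sense of Section~\ref{sec:vsystems}. By Theorem~\ref{thm:barthe}, every point of \(\relin(P(\mA))\) is the squared-length vector of a normalized tight frame \(F(\mu_i\alpha_i)\). Theorem~\ref{thm:modulispace} would then give all of \(\relin(P(\mA))\) rather than \(\relin(P(\mA))\cap\ker Q\). For \(H_3\) the latter is a single point (Table~\ref{tab:pk-simplicial-arrangements}).

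What is true is that a Coxeter configuration with \(W\)-invariant lengths is a \(\vee\)-system. For \(B_3\) the long and short roots form separate \(W\)-orbits, so equal lengths are allowed. You can also check this directly:
\begin{itemize}
\item \(\{e_i\}\cup\{(e_i\pm e_j)/\sqrt2\}\) has frame operator \(3\,\Id\).
\item Each \(2\)-plane containing at least three of these vectors contains either four of them at angles \(\pi/4\) or three at angles \(\pi/3\). These are tight frames by Lemma~\ref{lem:square-criterion}.
\item The remaining pairs \(e_k,(e_i\pm e_j)/\sqrt2\) with \(k\notin\{i,j\}\) are orthogonal.
\item Dividing by \(\sqrt3\) gives the Euclidean \(\vee\)-system.
\end{itemize}
Alternatively, \(w\equiv 1/3\) solves \(Bw=\mathbf{1}\) and satisfies the strict inequalities (\(4/3<2\) at quadruple points, \(1<2\) at triple points). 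Theorem~\ref{thm:fromPKtoV} then produces the system.

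With these two repairs your argument coincides with the paper's.
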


We also derive a description of the moduli space of Euclidean \(\vee\)-systems in a fixed projective equivalence class in terms of combinatorial data.

\begin{theorem}\label{thm:modulispace}
    The moduli space \(\msv\) of irreducible Euclidean \(\vee\)-systems within a fixed projective equivalence class \([\mA]\) up to orthogonal projective equivalence is homeomorphic to the interior of a polytope. More precisely, 
    \[
    \msv \cong \relin(P(\mA)) \cap \ker Q \,,
    \]
    where \(\relin(P(\mA))\) is the relative interior of the base polytope \(P(\mA)\) and
    \(\ker Q\) denotes the kernel of the Hirzebruch quadratic form \(Q\) of \(\Delta_{\mA}\).
\end{theorem}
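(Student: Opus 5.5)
The plan is to realize the moduli space as a space of weight vectors and then let the correspondence (Theorems~\ref{thm:fromVtoPK} and \ref{thm:fromPKtoV}) carry everything else.

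Fix the projective class \([\mA]\) and let \(\mH = \Delta_{\mA}\) be the common underlying arrangement, considered up to linear equivalence. Define
\[
\Phi \colon \msv \to \R^n,\qquad \mA' \mapsto r_{\mA'} = (|\alpha'_1|^2,\dots,|\alpha'_n|^2),
\]
normalised so that \(\sum_i r_i\) is fixed; this sum is \(d\) under the standard \(\vee\)-system normalisation \(\sum \alpha\otimes\alpha = \Id\). Corollary~\ref{cor:bijection} restricted to the class \([\mA]\) says that \(\Phi\) is a bijection from \(\msv\) onto the set \(W\) of weights \(w\) for which \((\mH,w)\) is a real PK arrangement. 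Injectivity is the uniqueness part of Theorem~\ref{thm:fromPKtoV}. Surjectivity onto \(W\) is the existence part of Theorem~\ref{thm:fromPKtoV}, together with Theorem~\ref{thm:fromVtoPK}, which shows that \(\Phi\) lands in \(W\). The statement then reduces to two claims:
\[
W = \relin(P(\mA)) \cap \ker Q,
\]
and \(\Phi\) is a homeomorphism onto its image. Since \(\relin(P(\mA))\cap\ker Q\) is a convex set, it is homeomorphic to the relative interior of a polytope, assuming \(\ker Q\) meets it along a linear slice. Here I read \(\ker Q\) as the zero set of the Hirzebruch form, and I expect the paper to show that this set is linear on the relevant region, for example because \(Q\) restricted there is semidefinite.

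To identify \(W\), I would unwind the definition of a real PK arrangement. It should consist of two conditions. The first is that the weights satisfy the stability/positivity inequalities: for every proper subspace \(L\), \(\sum_{\Delta_i\supset L} w_i\) is bounded by the appropriate fraction of the total weight. By the matroid description of the base polytope, these inequalities with strict inequality carve out exactly \(\relin(P(\mA))\). The second condition is that the Hirzebruch quadratic form \(Q(w)\) vanishes.

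The bridge to \(\vee\)-systems is the real Kempf--Ness/moment map argument adapted from \cite{dbp-hirzebruch}. For \(w\in\relin(P(\mA))\), the moment map equation \(\sum_i w_i\, \frac{\alpha_i\otimes\alpha_i}{|\alpha_i|^2} = \Id\) has a unique solution up to \(O(d)\) by Kempf--Ness; this is polystability, and it holds precisely in the relative interior. The extra \(\vee\)-conditions on each two-dimensional intersection then correspond to \(Q(w)=0\).

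For continuity, \(\Phi\) is clearly continuous, being given by squared lengths. For the inverse, I would use that the Kempf--Ness minimiser of the convex (geodesically convex on \(\GL_d/O(d)\)) functional depends continuously on \(w\) in the stable locus, since it is a strictly convex proper function with parameters varying continuously.

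The main obstacle is the precise identification of the PK conditions with \(\relin(P(\mA))\cap\ker Q\). In particular, I need to show that the vanishing of \(Q\) exactly captures the \(\vee\)-conditions once the moment map equation holds. I would try first to verify this on each codimension-two stratum, expressing \(Q\) as a sum of local contributions that are each nonnegative and vanish iff the local \(\vee\)-condition holds.
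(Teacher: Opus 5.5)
Most of your architecture works and repackages the paper's ingredients. Restricting Corollary~\ref{cor:bijection} to the class $[\mA]$ does give a bijection from $\msv$ onto the set $W$ of PK weights for $\Delta_{\mA}$. For irreducible $\mA$, the conditions $w_i\in(0,1)$, \ref{it:PK1} and \ref{it:PK2} cut out exactly $\relin(P(\mA))$. Continuity of the inverse via continuous dependence of the Barthe/Kempf--Ness minimiser on $w$ is essentially Proposition~\ref{prop:msptf} (the paper uses the implicit function theorem at the non-degenerate minimiser). The paper's order differs: it first proves $\mT_{[\mA]}\cong\relin(P(\mA))$ for tight frames, then cuts out $\msv$ by $Q=0$ via Proposition~\ref{prop:hirineq}.

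The genuine gap is the step you defer to ``the paper''. You must show that $\{w\in\relin(P(\mA)) : Q(w)=0\}$ equals $\relin(P(\mA))\cap\ker Q$, where $\ker Q$ is the radical $\{w : Qw=0\}$ of the form, not its zero set. Your reading of $\ker Q$ as the zero set makes the identification $W=\relin(P(\mA))\cap\{Q=0\}$ trivial. It also turns the polytope conclusion into an unproved assumption (``assuming $\ker Q$ meets it along a linear slice''). You also misplace the difficulty: that vanishing of $Q$ captures the $\vee$-conditions is already inside the theorems you cite, but the linearity is not. Semidefiniteness is not the mechanism either, since $Q$ is in general indefinite on $\R^n$. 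For four planes in general position in $\R^3$, $Q(w)=\tfrac12\sum w_i^2-\tfrac16(\sum w_i)^2$, which is positive at $e_1$ and negative at $(\tfrac34,\tfrac34,\tfrac34,\tfrac34)\in\relin(P)$.

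The missing argument has two steps.
\begin{enumerate}
\item $Q\le 0$ on all of $\relin(P(\mA))$. By Theorem~\ref{thm:barthe}, every such $w$ is the squared-length vector of a normalized tight frame in $[\mA]$. The identity \eqref{eq:hirzebruchnorm} (your ``local contributions'') then writes $-2Q(w)$ as a sum of squares.
\item Zeros of $Q$ there lie in the radical. Since $\mA$ is irreducible, $\relin(P(\mA))$ is open in the affine hyperplane $\{\sum_i w_i=d\}$, which misses the origin. By homogeneity, $Q\le 0$ on the open cone $\R_{>0}\cdot\relin(P(\mA))\subset\R^n$. So any zero $w$ of $Q$ in $\relin(P(\mA))$ is an interior local maximum, hence $\nabla Q(w)=2Qw=0$, i.e.\ $w\in\ker Q$. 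The converse is trivial. This is Lemma~\ref{lem:quadratic-zero-locus}; along your PK route you could instead invoke Proposition~\ref{prop:pk3-kernel}.
\end{enumerate}
With this in place, $\relin(P(\mA))\cap\ker Q=\relin(P(\mA)\cap\ker Q)$ when non-empty, and your homeomorphism argument completes the proof.
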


In particular, since the Hirzebruch quadratic form is entirely determined by the matroid polytope, it follows that the property of a vector configuration being projectively equivalent to a \(\vee\)-system (i.e. \(\msv\) being non-empty) depends only on the matroid polytope of \([\mA]\).

Next, we relate \(\vee\)-systems to simplicial arrangements.

\begin{theorem}\label{thm:simplicial}
    Let \(\mA\) be a Euclidean \(\vee\)-system. Then the associated hyperplane arrangement \(\Delta_{\mA}\) is simplicial.
\end{theorem}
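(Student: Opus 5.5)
We first reduce to the irreducible case. A Euclidean \(\vee\)-system decomposes orthogonally as \(\mA=\mA_1\sqcup\dots\sqcup\mA_k\), with \(\mA_j\) spanning mutually orthogonal subspaces \(V_j\) and \(\R^d=\bigoplus V_j\) (possibly with a trivial summand where no vectors lie; we may assume \(\mA\) spans). Each \(\mA_j\) is again a Euclidean \(\vee\)-system. The chambers of \(\Delta_{\mA}\) are products of chambers of the \(\Delta_{\mA_j}\) in \(V_j\). A product of simplicial cones is simplicial, because the wall normals of the factors remain linearly independent after taking the direct sum. So it suffices to treat irreducible \(\mA\). In rank \(d=1\) there is nothing to prove, so assume \(d\ge 2\).

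The plan for the irreducible case is an induction on \(d\), using restrictions to walls. Fix a chamber \(C\) and a wall \(F=\overline C\cap\alpha^\perp\). The restriction of a \(\vee\)-system to a hyperplane \(\alpha^\perp\) is again a \(\vee\)-system (a result of Feigin--Veselov). Concretely, we take the orthogonal projections \(\pi(\beta)\) of the \(\beta\notin\R\alpha\), collected along the lines of \(\alpha^\perp\) with the appropriate weights. The hyperplanes of this restricted system are exactly the traces \(\beta^\perp\cap\alpha^\perp\). Hence \(F\) is a chamber of a \(\vee\)-system of rank \(d-1\), and by induction (together with the reduction above) \(F\) is a simplicial cone. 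Every facet of \(C\) is then a simplicial cone of dimension \(d-1\). By itself this does not force \(C\) to be simplicial; an octahedral cone is a counterexample. So one more global input is needed.

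For the global input I would use the \(\vee\)-condition in its local residue form. For \(x\) generic in \(\alpha^\perp\),
\[
\sum_{\beta\notin\R\alpha}\frac{(\alpha,\beta)}{(\beta,x)}\,\alpha\wedge\beta=0,
\]
which is the flatness of the connection \(\nabla=d-\kappa\sum_\beta\frac{d\beta}{\beta}\,\beta\otimes\beta\) with \(\kappa\) determined by \(\sum\beta\otimes\beta=\Id\). Using \(\sum_\beta\beta\otimes\beta=\Id\), I would build the function \(\phi(x)=\sum_\beta r_\beta\log|(\beta,x)|\) restricted to the slice \(\{x\in C:(x,e)=1\}\). I would then show that this function is strictly concave and proper on the slice, and that near every codimension-two face the \(\vee\)-condition in the corresponding rank-two subsystem forces the two adjacent walls to meet at a dihedral angle strictly less than \(\pi/2\) in the metric \(\sum(\beta,x)^2\). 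This is the real shadow of the cone angle condition \(\sum_{\beta\ni L} r_\beta=2\,r\)-type identity for rank-two \(\vee\)-systems. A convex polyhedral cone in which all dihedral angles are acute (\(\le\pi/2\)) is simplicial, by the classical Coxeter-type lemma: the Gram matrix of unit outer normals has nonpositive off-diagonal entries, so the normals are linearly independent.

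The main obstacle is this last step: establishing acuteness of dihedral angles at codimension-two faces from the \(\vee\)-condition for the rank-two subsystem \(\mA\cap\Pi\). If it resists a direct proof, I would fall back on Theorem~\ref{thm:fromVtoPK}. That theorem makes \((\Delta_\mA,r_\mA)\) a real PK arrangement, so the flat cone metric on \(\C^d\) restricts on each real chamber to a Euclidean cone whose dihedral angles are \(\pi r\)-multiples bounded by \(\pi/2\). Simpliciality then follows from the same Gram-matrix lemma.
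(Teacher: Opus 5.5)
\textbf{The gap.} Your target is the right one, and it is the paper's: show that adjacent walls of every chamber meet at dihedral angle at most \(\pi/2\) in the metric \(\sum_\beta(\beta,x)^2\). By \ref{it:V1} this is just the standard metric \(|x|^2\). Then apply Coxeter's theorem on non-obtuse spherical polytopes, using that \(\Delta_{\mA}\) is essential because a tight frame spans. But you explicitly leave this angle estimate open, and it is essentially the whole proof. Neither of your two routes supplies it.

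Strict concavity and properness of \(\phi(x)=\sum_\beta r_\beta\log|(\beta,x)|\) on a slice of \(C\) only give a unique maximizer. They say nothing about the angle at a codimension-two face.

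The fallback via Theorem~\ref{thm:fromVtoPK} does not close the gap either. Conditions \ref{it:PK1}--\ref{it:PK3} are numerical, and producing an actual flat cone metric from them requires the deep existence results of \cite{panov} and \cite{dbp-cpn}. You would then still need a separate argument that the real chambers of that metric are non-obtuse; for line arrangements this is \cite[Theorem~4.1]{panov-real-hir}. Your description of the angles as ``\(\pi r\)-multiples'' is also inaccurate. At a codimension-two face lying on exactly two walls the angle is \(\pi/2\) whatever the weights. For the same reason your claim of a \emph{strictly} acute angle is false in that case; what holds, and what is needed, is \(\le\pi/2\). The reduction to irreducible systems and the induction over walls are harmless but play no role.

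\textbf{The missing step.} It is elementary and uses only \ref{it:V2} and \ref{it:V3}. For adjacent facets \(F_1,F_2\) of a chamber, let \(S\) be the span of \(F_1\cap F_2\) and \(\Pi=S^\perp\). The walls through \(S\) are exactly the \(\alpha^\perp\) with \(\alpha\in\mA\cap\Pi\). The dihedral angle equals a sector angle cut out in \(\Pi\) by the lines \(\alpha^\perp\cap\Pi\), equivalently (after rotating by \(\pi/2\)) by the lines \(\R\alpha\).

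If \(\mA\cap\Pi=\{\alpha_i,\alpha_j\}\), then \ref{it:V3} gives \((\alpha_i,\alpha_j)=0\), so the angle is \(\pi/2\).

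If \(|\mA\cap\Pi|\ge 3\), then \ref{it:V2} says \(\mA\cap\Pi\) is a tight frame for \(\Pi\). Identifying \(\Pi\cong\C\), this means \(\sum_j z_j^2=0\). A sector of angle \(\ge\pi/2\) would let you rotate all the lines into the closed first and third quadrants. Then all \(z_j^2\) lie in the closed upper half-plane with pairwise distinct arguments. So at least one lies strictly above the real axis, giving \(\mathrm{Im}\sum_j z_j^2>0\), a contradiction.

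\textbf{The Coxeter step.} Your one-line justification of the Coxeter-type lemma is incomplete. Non-obtuseness only controls the inner products of normals of \emph{adjacent} facets. Showing that the Gram matrix is non-positive off the diagonal for all pairs is the non-trivial content of \cite[Theorem~1]{coxeter-reflections}, which you should cite instead.
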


Finally, we derive the following classification result.

\begin{theorem}\label{thm:classification3d}
    Let \(\mA\) be an irreducible Euclidean \(\vee\)-system in \(\R^3\). 
    If \(|\mA| \leq 27\), then \(\mA\) is one of the \(\vee\)-systems listed in Schreiber-Veselov \cite{schreiberveselov}.
\end{theorem}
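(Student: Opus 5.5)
The plan is to reduce the statement to the classification of simplicial line arrangements with few lines, and then to test each candidate for the PK property. Let \(\mA\) be an irreducible Euclidean \(\vee\)-system in \(\R^3\) with \(n=|\mA|\le 27\) vectors.

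First, Theorem~\ref{thm:simplicial} shows that \(\Delta_{\mA}\) is a simplicial arrangement of \(n\) planes in \(\R^3\), which we view as a simplicial line arrangement in \(\RP^2\). Because \(\mA\) is irreducible, this arrangement is not a near pencil, or any other reducible configuration. If it were, the \(\vee\)-conditions would force \(\mA\) to split as an orthogonal direct sum. Cuntz's classification \cite{cuntz} of simplicial arrangements with at most \(27\) lines then puts \(\Delta_{\mA}\) in an explicit finite list, up to linear equivalence. This list consists of the finitely many sporadic arrangements together with the members of the infinite families \(\mathcal{R}(2k)\) and \(\mathcal{R}(4k+1)\) that have at most \(27\) lines.

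Second, Theorem~\ref{thm:fromVtoPK} shows that \((\Delta_{\mA}, r_{\mA})\) is a real PK arrangement. The problem therefore becomes deciding, for each simplicial arrangement \(\mH\) on the list, whether there are positive weights \(w\) making \((\mH,w)\) PK. By Theorem~\ref{thm:modulispace}, with its proof run on \(\mH\) itself, this happens exactly when \(\relin(P(\mH))\cap\ker Q\neq\emptyset\). Concretely, this is a finite linear–quadratic feasibility problem in the weights. The weights at each multiple point must satisfy the local PK (balancing) conditions, which is the base-polytope condition that the weight through every point of multiplicity \(\ge 3\) is less than the total weight. The weights must also make the Hirzebruch form \(Q\) vanish.

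For each arrangement I would write down its intersection lattice, using the standard data in Cuntz's tables, and then solve the system. When it has a solution, Theorem~\ref{thm:fromPKtoV} gives a \(\vee\)-system, unique up to orthogonal projective equivalence once the weights are fixed, and I would match it with an entry of the Schreiber--Veselov catalog \cite{schreiberveselov}. When it has no solution, the arrangement is excluded. The quickest exclusion argument is to exhibit a linear combination of the local conditions that contradicts \(Q(w)=0\). An equivalent route is to show that \(Q\) is definite on the relevant cone.

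For the two infinite families I expect a uniform argument. The symmetry of \(\mathcal{R}(2k)\) and \(\mathcal{R}(4k+1)\) lets one average the weights over the dihedral symmetry group, since \(\ker Q\cap\relin P\) is convex and invariant. This reduces the problem to two or three parameters. The resulting solutions should match the known families \(A\)-, \(B\)-type deformations in the catalog.

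The main obstacle is the sporadic case analysis. There are roughly ninety sporadic simplicial arrangements with at most \(27\) lines, and each requires its own feasibility check. Nearly all of them should be ruled out, but the certificates of non-existence must be produced arrangement by arrangement, probably with computer assistance through linear programming over the polytope followed by a check of the sign of \(Q\). The second delicate point is to confirm that the surviving arrangements have moduli spaces that agree exactly with the catalog's parametrised families and contain no further solutions. Theorem~\ref{thm:modulispace} handles this by identifying the full solution set with \(\relin(P)\cap\ker Q\).
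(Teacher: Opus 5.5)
Your plan follows the paper's proof. Theorems~\ref{thm:simplicial} and~\ref{thm:fromVtoPK} reduce the problem to simplicial arrangements with at most \(27\) lines that admit PK weights. Cuntz's list \cite{cuntz} together with an exact feasibility computation (Proposition~\ref{prop:computer}) leaves the arrangements in Table~\ref{tab:pk-simplicial-arrangements}, and these are matched with the Schreiber--Veselov catalog using the uniqueness part of Theorem~\ref{thm:fromPKtoV}.

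Two corrections. First, at a point of multiplicity \(\geq 3\) the condition is \(\sum_{p\in L_i} w_i<2\), not \(<3\) (the total weight). Second, since \(Q\le 0\) on \(\relin P\), the condition \(Q(w)=0\) is the linear condition \(Bw=\mathbf{1}\) (Lemma~\ref{lem:dimasystem}). So the test is a purely linear feasibility problem and needs no separate check of the sign of \(Q\).
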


We would like to stress that our approach provides a fundamentally new route to the Schreiber–Veselov catalog. Namely, all known \(\vee\)-systems can be obtained from Gr\"unbaum’s original 1972 catalog of sporadic simplicial arrangements by solving, for each arrangement, a system of linear equations and inequalities.

\subsection{Outline of the paper}

Sections~\ref{sec:vsystems} and~\ref{sec:PK} recall the two structures that will be compared: Euclidean \(\vee\)-systems and real PK arrangements. 

Sections~\ref{sec:tf} and~\ref{sec:barthe} contain the main background tools. Section~\ref{sec:tf} reviews finite tight frames, which extend the concept of an orthonormal basis to evenly distributed vector sequences. Section~\ref{sec:barthe} recalls Barthe's theorem, which characterizes when a weighted projective class of vectors contains a normalized tight frame with prescribed squared lengths.

Section~\ref{sec:hqf} forms the  technical core of the paper. The main result of the section is Proposition~\ref{prop:hirineq} which characterizes \(\vee\)-systems, among the broader class of normalized tight frames, as extremizers of the Hirzebruch quadratic form. 


The correspondence between Euclidean \(\vee\)-systems and real PK arrangements is proved in Sections~\ref{sec:vtop} and~\ref{sec:ptov}. In Section~\ref{sec:vtop} we show that every irreducible Euclidean \(\vee\)-system gives a real PK arrangement (Theorem \ref{thm:fromVtoPK}). In Section~\ref{sec:ptov} we prove the converse (Theorem \ref{thm:fromPKtoV}), using Barthe's theorem to construct the required normalized tight frame, and then applying the criterion from Proposition~\ref{prop:hirineq}.

Section~\ref{sec:corrolaries} gives a short proof of Corollaries \ref{cor:bijection} and \ref{cor:classification}.

Section~\ref{sec:moduli} applies the correspondence given by Theorems~\ref{thm:fromVtoPK} and~\ref{thm:fromPKtoV} to moduli. It identifies the moduli space of Euclidean \(\vee\)-systems in a fixed projective class with the intersection of the relative interior of the matroid base polytope with the kernel of the Hirzebruch quadratic form.

In Section~\ref{sec:simplicial} we provide a direct and elementary proof that the hyperplane arrangement associated with any Euclidean \(\vee\)-system is simplicial.
In Section~\ref{sec:3dclassification} we combine this with the classification of simplicial line arrangements with at most \(27\) lines, showing that the Schreiber--Veselov catalog is complete in this range. The appendix~\ref{app:tables} records the known simplicial line arrangements which come from \(\vee\)-systems.

\subsection*{Acknowledgments.}
 We would like to thank Misha Feigin for useful comments.  

\section{\(\vee\)-systems}\label{sec:vsystems}

We start with the definition of the $\vee$-systems in the (real) Euclidean form.
We should mention that in \cite{feigin-veselov-geometryVsystem} it was also introduced the notion of complex Euclidean $\vee$-systems, but in this paper we will consider only their real versions, leaving the investigation of the complex case for the future.

\begin{definition}[\cite{veselov1,veselov2,feigin-veselov-geometryVsystem,schreiberveselov}]\label{def:Vsystem}
	A {\it Euclidean \(\vee\)-system} in \(\R^d\) is a finite set 
	\[
	\mA = \{\alpha_1, \ldots, \alpha_n\} \subset \R^d \setminus \{0\}
	\] 
	of pairwise non-proportional vectors satisfying the following properties.
	\begin{enumerate}[label=\textup{(V\arabic*)}]
		\item \label{it:V1} For all \(x \in \R^d\):
		\begin{equation*}
		\sum_{i=1}^n (x, \alpha_i)^2 =|x|^2 \,,
		\end{equation*}
		where \(\inn\) denotes the standard Euclidean inner product.
		\item \label{it:V2} For every linear \(2\)-plane \(\Pi \subset \R^d\) with \(\left|\mA \cap \Pi \right| \geq 3\) and all \(x\in \Pi\):
		\begin{equation*}
		\sum_{i \,|\, \alpha_i \in \Pi} (x, \alpha_i)^2 = \nu(\Pi) \cdot |x|^2  \,,
		\end{equation*}
		for some $\nu(\Pi) \in \R$.
		
		\item \label{it:V3} For every linear \(2\)-plane \(\Pi \subset \R^d\) with \(\mA \cap \Pi = \{\alpha_i, \alpha_j\}\) for some \(i \neq j\):
		\begin{equation*}
		(\alpha_i, \alpha_j) = 0 \,.
		\end{equation*}
	\end{enumerate}
\end{definition}

The original definition of the $\vee$-systems in \cite{veselov1} is the $GL(V)$-invariant version of the Euclidean one and looks as follows. 

Let $V$ be a real vector space and  $\mathcal{A}\subset V^*$ be a
finite set of pairwise non-collinear vectors in the dual space $V^*$ (covectors) spanning $V^*$. To such a set one can
associate the following positive definite {\it canonical form} $G_{\mathcal A}$ on
$V$:
\begin{equation}
\label{GEC} 
G_{\mathcal A}(x,y)=\sum_{\alpha\in\mathcal{A}}\alpha(x)\alpha(y),\,\,  x,y\in V
\end{equation}
which establishes the isomorphism
$$
\varphi_{\mathcal A}: V \rightarrow V^*.
$$
Let $\alpha^\vee = \varphi_{\mathcal A}^{-1}(\alpha)$ be the corresponding inverse image of $\alpha \in \mathcal A.$
Note that because of the choice of the canonical form $\alpha^\vee$ is a complicated function of all $\alpha \in \mathcal A$.

The system $\mathcal{A}$ is called $\vee$-{\it system} if the following $\vee$-{\it conditions} 
\begin{equation}
\label{vee}
\sum\limits_{\beta \in \Pi \cap \mathcal {A}}
\beta(\alpha^\vee)\beta^\vee=\nu \alpha^{\vee}
\end{equation}
are satisfied for any $\alpha \in \mathcal{A}$ and any two-dimensional plane $\Pi \subset V^*$ containing $\alpha$ and some $\nu$, which may depend on $\Pi$ and $\alpha.$ If $\Pi$ contains more than 2 covectors then (\ref{vee}) imply that $\nu$ does not depend on $\alpha \in \Pi$ 
and 
\begin{equation}
\label{vee1} 
\sum\limits_{\beta \in \Pi \cap \mathcal {A}}
\beta^\vee \otimes \beta |_{\Pi} = \nu(\Pi) Id.
\end{equation}
If $\Pi$ contains only two covectors from $\mathcal A$, say $\alpha$ and $\beta,$ then (\ref{vee}) imply that 
\begin{equation}
\label{vee2} 
G_{\mathcal A}(\alpha^{\vee}, \beta ^{\vee})=0.
\end{equation}

The $\vee$-conditions are equivalent to the flatness of the {\it Dubrovin connection} on the complement to the corresponding hyperplane arrangement
\begin{equation}\label{veecon}
	\nabla^\vee_\xi=\partial_\xi - \kappa\sum_{\alpha\in {\mathcal A}}\frac{(\alpha,\xi)}{(\alpha,x)}\alpha \otimes \alpha. 
\end{equation}

The restriction of a $\vee$-system $\mathcal A$ to the  subspace defined by a subset $\mathcal B\subset \mathcal A$ is also a $\vee$-system (see \cite{FV1,feigin-veselov-geometryVsystem}),  so the $\vee$-systems can be considered as an extension of the class of Coxeter root systems closed under such operation. 

In \cite{feigin-veselov-holonomyliealgebras} it was shown that the  hyperplane arrangements corresponding to all known real $\vee$-systems can be obtained from the Coxeter arrangements and their restrictions (so, in particular, they are all simplicial) and it was conjectured that they are always hereditary-free in Saito's sense.

The examples of $\vee$-systems include all two-dimensional systems, 
Coxeter systems and the so-called deformed root systems \cite{veselov1}.
In particular, we have the following two families found in \cite{CV}:
\begin{equation}\label{a}
\mathfrak
{A}_n(c)=\{\sqrt{c_ic_j}(e_i-e_j),\quad 0\le i<j\le n\},
\end{equation}
\begin{equation}\label{bc}
\mathfrak{B}_n(c)= \left\{
\begin{array}{lll}
\sqrt{c_ic_j}(e_i \pm e_j)\,, &  1\le i<j\le n\,,
\\ \sqrt{2 c_i(c_i + c_0)}e_i\,, &  i=1,\ldots ,n\,.
\end{array}
\right.
\end{equation}
where $c_0, c_1,\dots, c_{n}$ are arbitrary positive
parameters. 

The full classification of $\vee$-systems is still an open problem. Since in dimension 2 we have no restrictions, the first interesting, and also crucial case is classification in dimension 3. All the 3D examples known so far were described in \cite{feigin-veselov-geometryVsystem} (see the schematic map in the Appendix) and cataloged in \cite{schreiberveselov}. In \cite{lechtenfeld_et_al} it was shown using computer that this list is complete for the $\vee$-systems with no more than 10 covectors. One of the results of our paper is the completeness of this list for the systems with up to 27 covectors. Corollary~\ref{cor:classification} settles the classification problem in the case of $\vee$-systems of rank \(3\) consisting of vectors of the same length.


To define the corresponding Euclidean $\vee$-system one can simply introduce the Euclidean structure on $V$ using the canonical form $G_{\mathcal A}(x,y)$, so that property $(V1)$ will be satisfied automatically. In other words, the Euclidean $\vee$-systems are the $\vee$-systems with the chosen canonical form.

There are also trigonometric and elliptic versions of $\vee$-systems introduced in \cite{feigin_trig} and \cite{strachan} in relation with the corresponding classes of solutions of the WDVV equation (see more recent results in \cite{alkadhem_feigin}, \cite{stedman_strachan}). The relation with polynomial solutions of the WDVV equation and Frobenius--Saito structure on the orbit spaces of Coxeter groups was explained by Dubrovin \cite{dubrovin_almost}.

\section{Real PK arrangements and polyhedral K\"ahler geometry}\label{sec:PK} 

We will now give the definition of real PK arrangements and will briefly recall the motivation for studying them, coming from polyhedral K\"ahler geometry.  

\begin{definition}[\cite{panov}, \cite{dbp-cpn}]\label{def:PK}
A \emph{real PK arrangement} in \(\R^d\) with \(d \geq 2\) is a pair \((\mH, w)\), where
\(\mH = \{H_1,\ldots,H_n\}\) is a finite set of pairwise distinct linear hyperplanes \(H_i \subset \R^{d}\) and \(w=(w_1,\ldots,w_n)\) is a vector in \(\R^n\) with 
\[
w_i\in(0,1) \quad \textup{for all } i \,,
\]
satisfying the following properties.
\begin{enumerate}[label=\textup{(PK\arabic*)}]
\item \label{it:PK1}
\[
\sum_{i=1}^n w_i = d .
\]

\item \label{it:PK2}
For every non-zero proper linear subspace \(L\subset \R^{d}\) obtained as the intersection of hyperplanes in \(\mH\), we have
\[
\sum_{i\,:\,L\subset H_i} w_i < r(L) \,,
\]
where \(r(L) = d - \dim L\) denotes the codimension of \(L\).

\item \label{it:PK3}
\[
Q(w_1, \ldots, w_n)=0 \,,
\]
where \(Q\) is the Hirzebruch quadratic form of \(\mH\) recalled below.
\end{enumerate}
\end{definition}

\begin{definition}[\cite{dbp-miyaokayau}]\label{def:hirqf}
The \emph{Hirzebruch quadratic form} of \(\mH\) is the homogeneous degree-\(2\) polynomial on \(\R^n\) given by
\begin{equation}\label{eq:hir}
Q(w_1, \ldots, w_n) = \sum_{S \in \mG} w_S^2 \,-\, \frac{1}{2} \cdot \sum_{i=1}^n (\sigma_i-1) \, w_i^2 \,-\, \frac{1}{2d} \cdot \left(\sum_{i=1}^{n} w_i\right)^2    \,,
\end{equation}
where: \(\mG\) is the set of codimension \(2\) linear subspaces \(S \subset \R^{d}\) that are the common intersection of \(3\) or more hyperplanes in \(\mH\)\,;
\(w_S\) for \(S \in \mG\) is the linear function given by
	\begin{equation*}
	2 \, w_S = \sum_{i \,|\, S \subset H_i} w_i \,;
	\end{equation*}
and \(\sigma_i\) is the number of elements \(S \in \mG\) with \(S \subset H_i\).
\end{definition}

\begin{remark} Real PK arrangements are a special case of complex PK arrangements, for which the definition is identical with the only difference that instead of real hyperplane arrangements we consider complex hyperplane arrangements. The complexation of a real PK arrangement is naturally a complex PK arrangement.
\end{remark}

The definition of complex PK arrangement has a topological origin and is motivated by the study of polyhedral K\"ahler manifolds introduced in \cite{panov}.
By definition, PK manifolds are even dimensional PL manifolds with a piecewise Euclidean structure and  unitary holonomy. The metric singularities happen along a collection of complex divisors on the manifold, such as a hyperplane arrangement in $\mathbb{CP}^{d-1}$. 
The conditions in the definition of PK arrangement encode the topological constraints that are necessary and sufficient for the existence of a polyhedral K\"ahler metric on \(\CP^{d-1}\) with prescribed cone angles \(2\pi(1-w_i)\) along the complex hyperplanes \(H_i \subset \CP^{d-1}\) as shown in \cite{panov} for \(d=3\) and \cite{dbp-cpn} for all \(d \geq 3\).
In this setup equations (\ref{it:PK1}--\ref{it:PK3}) follow from Gauss--Bonnet formulas applied to the singular metric. In complex dimensions $2$ and higher PK metrics are highly rigid, and it is expected that complements to PK arrangements in $\mathbb {CP}^{d-1}$ are $K(\pi,1)$ spaces. This was proven  for $d=3$ in \cite{panovpetrunin-ramification} and holds for (comlpexifications of) real PK arrangements by Theorem \ref{thm:simplicial}.

From a broader perspective, PK metrics fit naturally into the theory of K\"ahler--Einstein metrics with conical singularities along divisors \cite{donaldson, deborbon_spotti}. They occupy a role analogous to that of complex hyperbolic manifolds in negative curvature: they arise as equality cases of Miyaoka--Yau type inequalities,  \cite{panov, dbp-miyaokayau}.

\subsection{Linear equations for PK arrangements}\label{sec:linear-pk}

We next give an equivalent characterization of real PK arrangements in terms of linear equations and strict linear inequalities. This characterization is a higher dimensional generalization of the rank three case, derived in \cite[Section~5.3]{panov}.
The key input is the non-positivity of \(Q\) on the set of \emph{stable weights}, proved in \cite{dbp-miyaokayau,dbp-hirzebruch}. See also Proposition~\ref{prop:hirineq} below.

\begin{proposition}\label{prop:pk3-kernel}
Let \(w\in(0,1)^n\) satisfy~\ref{it:PK1} and~\ref{it:PK2}. Then condition~\ref{it:PK3} holds if and only if \(w\in\ker Q\), where \(\ker Q \subset \R^n\) is the linear subspace given by the kernel of the Hirzebruch quadratic form.
\end{proposition}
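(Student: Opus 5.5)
The approach is the standard fact that a quadratic form which is semi-definite on an open cone vanishes there exactly along its kernel. The key input, as the paper indicates, is that \(Q\) is non-positive on the set of stable weights (\cite{dbp-miyaokayau,dbp-hirzebruch}; see also Proposition~\ref{prop:hirineq}). We use it in the following form. Let \(U\subset\R^n\) be the set of \(w\in(0,1)^n\) satisfying the strict inequalities~\ref{it:PK2} together with \(\sum_i w_i=d\). Then \(Q\le 0\) on \(U\), and by homogeneity of degree two also on the cone \(C=\R_{>0}\cdot U\).

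The direction ``\(w\in\ker Q\Rightarrow Q(w)=0\)'' is trivial. If \(B\) denotes the symmetric bilinear form with \(Q(v)=B(v,v)\), then \(w\in\ker Q\) means \(B(w,\cdot)=0\), and in particular \(Q(w)=B(w,w)=0\).

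For the converse, suppose \(w\in U\) and \(Q(w)=0\). The conditions defining \(U\) are finitely many strict inequalities together with the single linear equation~\ref{it:PK1}. Hence \(U\) is relatively open in the affine hyperplane \(\{\sum w_i=d\}\), and \(C\) is an open subset of \(\R^n\), since scaling recovers the missing direction. Because \(w\in C\) is a maximum of \(Q|_C\) (the maximum value being \(0\)), the gradient vanishes there: \(\nabla Q(w)=2B(w,\cdot)=0\), so \(w\in\ker Q\).

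A more hands-on version of the same argument: for any \(v\in\R^n\) and small \(t\), the point \(w+tv\) lies in \(C\). Expanding gives
\[
0\ge Q(w+tv)=2tB(w,v)+t^2Q(v).
\]
Letting \(t\to0^\pm\) forces \(B(w,v)=0\).

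The main point to check is that \(C\) is genuinely open in \(\R^n\), i.e.\ that condition~\ref{it:PK1} is the only equation among the constraints. This holds because \(\sum_i w_i=d\) is transversal to the radial direction, and every other condition (\(0<w_i<1\) and~\ref{it:PK2}) is strict. After rescaling \(w+tv\) to have sum \(d\), these strict conditions remain valid for small \(t\).

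The only non-elementary ingredient is the non-positivity of \(Q\) on stable weights. We take this as given from the cited works, noting that \(U\) is contained in the stable locus: the conditions \(\sum_{L\subset H_i}w_i<r(L)\) for intersection subspaces are exactly the stability inequalities of the relevant moment-map picture.
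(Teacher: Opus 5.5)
Your proof is correct and follows the paper's argument. Both use the non-positivity of \(Q\) on the open cone of stable weights \(C^{\circ}\) from \cite{dbp-miyaokayau}, note that \(\ref{it:PK1}\) and \(\ref{it:PK2}\) place \(w\) in that cone, observe that a zero of \(Q\) there is a local maximum, and conclude \(dQ(w)=0\), i.e.\ \(w\in\ker Q\). Your homogeneity argument that \(\R_{>0}\cdot U\) is open in \(\R^n\) just makes explicit what the paper takes from the definition of \(C^{\circ}\) as an open cone.
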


\begin{proof}
	Conditions \ref{it:PK1} and \ref{it:PK2} imply that \(w \in C^{\circ}\), where \(C^{\circ} \subset \R^n\) is the open polyhedral cone of stable weights \cite[Definition~6.15]{dbp-miyaokayau}. By \cite[Theorem~6.18]{dbp-miyaokayau}, \(Q \leq 0\) on \(C^{\circ}\). Hence, if \(Q(w)=0\), then \(w\) is a local maximum of \(Q\), and therefore \(dQ(w)=0\). Since \(Q\) is a quadratic form, this is equivalent to \(w\in\ker Q\). The converse is immediate.
\end{proof}

\begin{remark}
	The above proposition can also be proved using the results developed later in this paper. See Section~\ref{sec:pf-moduli-thm}.
\end{remark}

We give an explicit form of the linear condition \(w\in\ker Q\).
We use the same notation \(Q\) for the Hirzebruch quadratic form and for its associated symmetric matrix, so that
\[
Q(w)=w^tQw=\sum_{i,j=1}^n Q_{ij}w_iw_j.
\]
The coefficients \(Q_{ij}\) can be obtained by expanding \eqref{eq:hir}.
For an intersection \(S\) of hyperplanes in \(\mH\), let
\[
\operatorname{mult}(S)=\#\{i \mid S\subset H_i\}
\]
denote its multiplicity. Then
\[
Q_{ii}=\frac{2(d-1)-d\sigma_i}{4d},
\qquad
Q_{ij}=\begin{cases}
-\dfrac{1}{2d} & \textup{if } \operatorname{mult}(H_i\cap H_j)=2,\\[4pt]
\dfrac{d-2}{4d} & \textup{if } \operatorname{mult}(H_i\cap H_j)\geq 3.
\end{cases}
\]

\begin{definition}\label{def:Bmatrix}
Using the numbers \(\sigma_i\) from Definition~\ref{def:hirqf}, define the symmetric matrix \(B=(B_{ij})\in M_n(\mathbb{Z})\) by
\[
B_{ii}=\sigma_i-1
\]
and, for \(i\neq j\),
\[
B_{ij}=\begin{cases}
1 & \textup{if } \operatorname{mult}(H_i\cap H_j)=2,\\
0 & \textup{if } \operatorname{mult}(H_i\cap H_j)\geq 3.
\end{cases}
\]
\end{definition}

Let \(J\) be the \(n\times n\) matrix whose entries are all equal to \(1\). A direct check shows that
\begin{equation}\label{eq:Q-B}
4d\,Q=(d-2)J-dB.
\end{equation}
Let \(\mathbf{1}=(1,\ldots,1)^t\).
\begin{lemma}\label{lem:Q-B}
If \(w\) satisfies~\ref{it:PK1}, then
\begin{equation}\label{eq:kernel-B}
w\in\ker Q
\quad\Longleftrightarrow\quad
Bw=(d-2)\mathbf{1}.
\end{equation}
\end{lemma}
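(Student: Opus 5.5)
The proof is a direct computation from the matrix identity \eqref{eq:Q-B}, $4d\,Q=(d-2)J-dB$. The idea is to apply both sides to $w$ and use \ref{it:PK1} to evaluate the $J$-term. Since $J=\mathbf{1}\mathbf{1}^t$, we have $Jw=\bigl(\sum_i w_i\bigr)\mathbf{1}$, and \ref{it:PK1} turns this into $Jw=d\,\mathbf{1}$. Hence
\[
4d\,Qw=(d-2)d\,\mathbf{1}-d\,Bw=d\bigl((d-2)\mathbf{1}-Bw\bigr).
\]
Since $d\geq 2>0$, we may divide by $d$, and we obtain $Qw=0$ if and only if $Bw=(d-2)\mathbf{1}$. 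This proves \eqref{eq:kernel-B}, because $\ker Q$ is by definition the kernel of the symmetric matrix associated with $Q$.

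The only point needing care is the identity \eqref{eq:Q-B} itself, which the paper states as a direct check; I would verify it entrywise from the formulas for $Q_{ij}$.

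On the diagonal, $B_{ii}=\sigma_i-1$, so
\[
(d-2)-d(\sigma_i-1)=2d-2-d\sigma_i=2(d-1)-d\sigma_i=4d\,Q_{ii}.
\]
Off the diagonal there are two cases:
\begin{itemize}
\item If $\operatorname{mult}(H_i\cap H_j)=2$, then $B_{ij}=1$ and $(d-2)-d=-2=4d\cdot\bigl(-\tfrac{1}{2d}\bigr)=4d\,Q_{ij}$.
\item If $\operatorname{mult}(H_i\cap H_j)\geq 3$, then $B_{ij}=0$ and $d-2=4d\cdot\tfrac{d-2}{4d}=4d\,Q_{ij}$.
\end{itemize}

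If one also wanted to justify the formulas for $Q_{ij}$, the source is expanding \eqref{eq:hir}. Each $w_S^2$ contributes $\tfrac14$ to the coefficient of $w_i^2$ for every $S\ni H_i$, giving $\tfrac{\sigma_i}{4}$ in total. The diagonal entry therefore collects $\tfrac{\sigma_i}{4}-\tfrac{\sigma_i-1}{2}-\tfrac{1}{2d}$, which simplifies to $Q_{ii}$. For the mixed terms, each pair $i\neq j$ lies in at most one $S$, and the symmetric-matrix convention halves the cross coefficients, which yields $Q_{ij}$.
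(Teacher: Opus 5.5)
Your proof is correct and is the same as the paper's: apply $4d\,Q=(d-2)J-dB$ to $w$, use \ref{it:PK1} to get $Jw=d\,\mathbf{1}$, and divide by $d>0$. Your entrywise verification of \eqref{eq:Q-B}, and of the coefficients $Q_{ij}$ from \eqref{eq:hir}, is also correct; it fills in the ``direct check'' that the paper leaves to the reader.
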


\begin{proof}
If \(w\) satisfies~\ref{it:PK1}, then \(Jw=d\mathbf{1}\). Hence \eqref{eq:Q-B} gives \eqref{eq:kernel-B}.
\end{proof}

Combining Proposition~\ref{prop:pk3-kernel} and Lemma~\ref{lem:Q-B} gives the following characterization.

\begin{corollary}\label{cor:linear-pk}
A pair \((\mH,w)\) is a real PK arrangement if and only if \(w \in S \cap \mathcal{U}\), where \(S\) is the affine subspace
\begin{equation}\label{eq:linear-pk-system}
Bw=(d-2)\mathbf{1},\qquad \sum_{i=1}^n w_i=d,
\end{equation}
and \(\mathcal{U}\) is the open set of \(w \in (0,1)^n\) which satisfy \ref{it:PK2}. 
\end{corollary}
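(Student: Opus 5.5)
The statement to prove is Corollary~\ref{cor:linear-pk}. The plan is to unpack Definition~\ref{def:PK} and replace the quadratic condition \ref{it:PK3} by a linear one, using the two results just established.

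By definition, \((\mH,w)\) is a real PK arrangement exactly when the following hold simultaneously:
\begin{enumerate}
\item \(w\in(0,1)^n\);
\item \ref{it:PK1}, i.e.\ \(\sum_i w_i = d\);
\item \ref{it:PK2};
\item \ref{it:PK3}, i.e.\ \(Q(w)=0\).
\end{enumerate}
The first and third conditions together say precisely that \(w\in\mathcal U\). The task is therefore to show that, for \(w\in\mathcal U\) satisfying \ref{it:PK1}, condition \ref{it:PK3} is equivalent to \(Bw=(d-2)\mathbf 1\).

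\emph{Forward direction.} Suppose \((\mH,w)\) is a real PK arrangement.
\begin{itemize}
\item Since \(w\in(0,1)^n\) satisfies \ref{it:PK1} and \ref{it:PK2}, Proposition~\ref{prop:pk3-kernel} converts \(Q(w)=0\) into \(w\in\ker Q\).
\item Since \(w\) satisfies \ref{it:PK1}, Lemma~\ref{lem:Q-B} converts \(w\in\ker Q\) into \(Bw=(d-2)\mathbf 1\).
\end{itemize}
Together with \(\sum_i w_i=d\), this shows \(w\in S\). Hence \(w\in S\cap\mathcal U\).

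\emph{Reverse direction.} Suppose \(w\in S\cap\mathcal U\).
\begin{itemize}
\item Membership in \(\mathcal U\) gives \(w\in(0,1)^n\) and \ref{it:PK2}.
\item The second equation of \eqref{eq:linear-pk-system} is exactly \ref{it:PK1}.
\item Since \ref{it:PK1} holds, Lemma~\ref{lem:Q-B} applied to \(Bw=(d-2)\mathbf 1\) yields \(w\in\ker Q\).
\item Then \(Q(w)=w^tQw=0\), which is \ref{it:PK3}. This step is immediate and does not need the stability input behind Proposition~\ref{prop:pk3-kernel}.
\end{itemize}
All conditions of Definition~\ref{def:PK} are satisfied, so \((\mH,w)\) is a real PK arrangement.

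There is no real obstacle here: all the substantive work sits in Proposition~\ref{prop:pk3-kernel}, namely the non-positivity of \(Q\) on stable weights. The only care needed is to note that both Proposition~\ref{prop:pk3-kernel} and Lemma~\ref{lem:Q-B} are applied only under their hypotheses, which are supplied by \ref{it:PK1} and by membership in \(\mathcal U\).
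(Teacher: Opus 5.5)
Your proposal is correct and follows the paper's own argument: the paper obtains this corollary directly by combining Proposition~\ref{prop:pk3-kernel} with Lemma~\ref{lem:Q-B}, just as you do. You also correctly note that the converse implication \(w\in\ker Q\Rightarrow Q(w)=0\) is the trivial one and does not use the stability input.
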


\section{Tight frames}\label{sec:tf}

\(\vee\)-Systems are a particular class of vector configurations known as normalized tight frames.
This section collects the elementary facts from the theory of tight frames that will be used in the proof of the correspondence between PK arrangement and \(\vee\)-systems (Theorems~\ref{thm:fromVtoPK} and~\ref{thm:fromPKtoV}). We recall the definition of tight frames, their behavior under orthogonal projection, and the trace formula (Lemma~\ref{lem:traceformula}). The main consequence
needed later is the stability inequality (Proposition~\ref{prop:stabineq}) for normalized tight frames. Lemma~\ref{lem:traceformula} and Proposition~\ref{prop:stabineq} provide the linear-algebraic mechanism behind conditions~\ref{it:PK1} and~\ref{it:PK2} appearing in the Definition~\ref{def:PK} of real PK arrangements.

\subsection{Tight frames}

\subsectionstart

\noindent We work on \(\R^d\) whose elements are understood as column vectors. 
We equip \(\R^d\) with its standard Euclidean inner product \((x,y) = x^\top y\).
For a vector \(x \in \R^d\), we denote by \(x \otimes x\) the linear endomorphism of \(\R^d\) defined by
\begin{equation}
    (x \otimes x)(y) = (x,y)\, x \,.
\end{equation}
In matrix form, \(x \otimes x = x \, x^{\top}\).

\begin{definition}\label{def:tightframe}
	A finite sequence of vectors \((\alpha_i)_{i=1}^n\) in \(\R^d\) is a \emph{tight frame} for \(\R^d\) if 
	\begin{equation}\label{eq:tightframe}
	\sum_{i=1}^{n} \alpha_i \otimes \alpha_i = \lambda \cdot \Id
	\end{equation}
	for some positive scalar \(\lambda\). We say that \((\alpha_i)_{i=1}^n\) is a \emph{normalized tight frame}\footnote{Normalized tight frames are commonly known as \emph{Parseval frames}.} if \(\lambda=1\).
\end{definition}

\begin{remark}
	We do not impose any additional assumptions on the vectors \(\alpha_i\) in Definition~\ref{def:tightframe}. In particular, the sequence \((\alpha_i)_{i=1}^n\) is allowed to contain zero vectors as well as repeated elements. This is consistent with standard conventions in the literature; see, for example, \cite[Definition 2.1]{waldron}. 
	When the vectors \(\alpha_1,\dots,\alpha_n\) are pairwise distinct, we also say that the set
	\(\{\alpha_i\}_{i=1}^n\) is a (normalized) tight frame for \(\R^d\) whenever the sequence \((\alpha_i)_{i=1}^n\) is a (normalized) tight frame for \(\R^d\).
\end{remark}

\begin{lemma}\label{lem:span}
	If \((\alpha_i)_{i=1}^n\) is a tight frame for \(\R^d\) then the vectors \(\alpha_1, \ldots, \alpha_n\) span \(\R^d\).
\end{lemma}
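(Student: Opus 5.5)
The plan is to argue by contradiction, using the frame operator applied to a vector orthogonal to the span. Suppose the vectors \(\alpha_1,\ldots,\alpha_n\) do not span \(\R^d\). Then their span \(W\) is a proper subspace, so its orthogonal complement \(W^\perp\) is non-zero, and we may pick a non-zero \(x \in W^\perp\). By construction \((x,\alpha_i)=0\) for every \(i\).

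Next we apply both sides of \eqref{eq:tightframe} to \(x\). Since \((\alpha_i \otimes \alpha_i)(x) = (\alpha_i,x)\,\alpha_i\), the left-hand side gives
\[
\sum_{i=1}^n (\alpha_i,x)\,\alpha_i = 0 .
\]
The right-hand side equals \(\lambda x\). Hence \(\lambda x = 0\). Because \(\lambda>0\) by Definition~\ref{def:tightframe}, this forces \(x=0\), contradicting the choice of \(x\).

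Equivalently, one can take the inner product of both sides with \(x\). This gives \(\sum_i (x,\alpha_i)^2 = \lambda |x|^2\), which is the formulation used in \ref{it:V1}. Then \(\lambda|x|^2>0\) for any \(x\neq 0\), so no non-zero \(x\) can be orthogonal to all the \(\alpha_i\).

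There is no real obstacle in this argument. The only point to keep in view is that positivity of \(\lambda\) is essential: with \(\lambda=0\) the statement would fail, for instance when all \(\alpha_i=0\). Zero or repeated vectors in the sequence cause no difficulty, since the argument never uses distinctness or non-vanishing of the \(\alpha_i\).
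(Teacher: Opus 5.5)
Your proof is correct and matches the paper's argument: the paper also takes \(x\) orthogonal to every \(\alpha_i\) and uses \eqref{eq:tightframe} to get \(\lambda x = 0\), hence \(x=0\) because \(\lambda>0\). You are also right that this positivity of \(\lambda\) is the only ingredient, and that zero or repeated vectors cause no difficulty.
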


\begin{proof}
	If \(x \in \R^d\) is such that \((x, \alpha_i) = 0\) for all \(i\) then Equation~\eqref{eq:tightframe} implies that \(x=0\).
\end{proof}

\begin{example}
	The standard basis \((e_i)_{i=1}^n\) of \(\R^n\) is a normalized tight frame for \(\R^n\).
\end{example}

An equivalent characterization of the tight frame condition is given by the following.

\begin{lemma}\label{lem:tfequiv}
    A sequence of vectors \((\alpha_i)_{i=1}^n\) in \(\R^d\) is a tight frame for \(\R^d\) if and only if there is some \(\lambda>0\) such that
    \begin{equation}\label{eq:tf2}
       \sum_{i=1}^n (x,\alpha_i)^2 = \lambda \,|x|^2 \quad \textup{for all } x \in \R^d \,. 
    \end{equation}
\end{lemma}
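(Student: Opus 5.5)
The plan is to compare the two quadratic forms
\[
q_1(x) = \Bigl(\sum_{i=1}^n \alpha_i\otimes\alpha_i\Bigr)x\cdot x
\quad\text{and}\quad
q_2(x) = \lambda\,|x|^2
\]
on \(\R^d\). Both directions reduce to the polarization identity: a symmetric endomorphism is determined by its associated quadratic form.

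\emph{First step: compute the quadratic form of the frame operator.} Set \(T = \sum_{i=1}^n \alpha_i\otimes\alpha_i\). By the definition \((x\otimes x)(y) = (x,y)\,x\), we have \((\alpha_i\otimes\alpha_i)(x) = (\alpha_i,x)\,\alpha_i\). Hence
\[
(Tx,x) = \sum_{i=1}^n (\alpha_i,x)(\alpha_i,x) = \sum_{i=1}^n (x,\alpha_i)^2 .
\]
In matrix form, \(T = \sum_i \alpha_i\alpha_i^\top\), which is symmetric.

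\emph{Forward direction.} Suppose \(T = \lambda\,\Id\) with \(\lambda>0\), as in Definition~\ref{def:tightframe}. The first step then gives
\[
\sum_{i=1}^n (x,\alpha_i)^2 = (Tx,x) = \lambda\,|x|^2
\]
for all \(x\), which is \eqref{eq:tf2}.

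\emph{Converse.} Suppose \eqref{eq:tf2} holds. Then the symmetric operator \(S = T - \lambda\,\Id\) satisfies \((Sx,x) = 0\) for all \(x\in\R^d\). Polarization gives, for all \(x,y\),
\[
2(Sx,y) = (S(x+y),x+y) - (Sx,x) - (Sy,y) = 0,
\]
where symmetry of \(S\) is used to identify \((Sx,y)\) with \((Sy,x)\). Hence \(S = 0\), that is, \(T = \lambda\,\Id\), and \((\alpha_i)_{i=1}^n\) is a tight frame with the same constant \(\lambda>0\).

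There is no real obstacle here. The only point that needs care is invoking symmetry of \(T\) in the polarization step, because a non-symmetric operator can have identically vanishing quadratic form without being zero. Symmetry holds because each \(\alpha_i\alpha_i^\top\) is symmetric.
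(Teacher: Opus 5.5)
Your proof is correct and follows the same route as the paper: compute the quadratic form of the frame operator for the forward direction, and use that a symmetric operator is determined by its quadratic form for the converse. The only difference is that you write out the polarization step, which the paper simply asserts.
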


\begin{proof}
    Suppose that Equation~\eqref{eq:tightframe} holds. Then for every \(x \in \R^d\) we have \(\sum (x,\alpha_i) \alpha_i = \lambda x\). Taking the inner product with \(x\) gives Equation~\eqref{eq:tf2}. Conversely, suppose that Equation~\eqref{eq:tf2} holds.
    Let \(A\) and \(B\) be the endomorphisms of \(\R^d\) given by \(A = \sum \alpha_i \otimes \alpha_i\) and \(B = \lambda \Id\). By assumption, \((Ax,x)=(Bx,x)\) for all \(x \in \R^d\). Since \(A\) and \(B\) are symmetric, \(A=B\), i.e., Equation~\eqref{eq:tightframe} holds.
\end{proof}

\subsection{Orthogonal projection of a tight frame}

\begin{lemma}[{\cite[Exercise 2.6]{waldron}}]\label{lem:projS}
	Suppose that \((\alpha_i)_{i=1}^n\) is a normalized tight frame for \(\R^d\). Let \(S \subset \R^d\) be a linear subspace and denote by \(P_S\) the orthogonal projection onto \(S\). For each vector \(\alpha_i\) let  \(\beta_i = P_S(\alpha_i)\). Then \((\beta_i)_{i=1}^n\) is a normalized tight frame for \(S\).
\end{lemma}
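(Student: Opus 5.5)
The plan is to verify the normalized tight frame condition for \((\beta_i)_{i=1}^n\) on \(S\) through the quadratic-form characterization of Lemma~\ref{lem:tfequiv}, applied with \(S\) in place of \(\R^d\) and with its induced inner product. The key observation is that \(P_S\) is self-adjoint and restricts to the identity on \(S\).

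So I will fix \(x \in S\). For each \(i\), self-adjointness gives
\[
(x,\beta_i) = (x, P_S\alpha_i) = (P_S x, \alpha_i) = (x,\alpha_i).
\]
Summing squares and using the normalized tight frame identity for \((\alpha_i)\) on \(\R^d\) then yields
\[
\sum_{i=1}^n (x,\beta_i)^2 = \sum_{i=1}^n (x,\alpha_i)^2 = |x|^2 \quad \textup{for all } x\in S.
\]

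All \(\beta_i\) lie in \(S\), so Lemma~\ref{lem:tfequiv} applies on \(S\) with \(\lambda = 1\). It follows that \(\sum_i \beta_i\otimes\beta_i = \Id_S\) as an endomorphism of \(S\), which is the required conclusion.

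If \(S=\{0\}\) the statement is vacuous. Otherwise there is no real obstacle. The one point needing care is that the operator identity is taken on \(S\), not on \(\R^d\). Equivalently, one can see it directly at the operator level: \(\sum_i \beta_i\otimes\beta_i = P_S\bigl(\sum_i \alpha_i\otimes\alpha_i\bigr)P_S = P_S\), and \(P_S\) restricts to \(\Id_S\).
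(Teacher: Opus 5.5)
Your proof is correct and rests on the same key observation as the paper's, namely that \((x,\beta_i)=(x,\alpha_i)\) for \(x\in S\). The paper applies \(P_S\) directly to the reconstruction formula \(y=\sum_i (y,\alpha_i)\alpha_i\) instead of going through the quadratic-form criterion of Lemma~\ref{lem:tfequiv}, so its argument is essentially your closing operator-level remark \(\sum_i \beta_i\otimes\beta_i = P_S\bigl(\sum_i \alpha_i\otimes\alpha_i\bigr)P_S = P_S\).
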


\begin{proof}
	Let \(y \in S\), we want to show that \(y = \sum_{i=1}^{n} (y, \beta_i) \beta_i\). Since \((\alpha_i)_{i=1}^n\) is a normalized tight frame for \(\R^d\), we have  \(y = \sum_{i=1}^{n} (y, \alpha_i) \alpha_i\). On the other hand, since \(y \in S\), we have \(y = P_S(y)\) and \( (y, \alpha_i) = (y, \beta_i)\) for all \(i\). Therefore,
	\[
	y = P_S(y) = \sum_{i=1}^{n} (y, \alpha_i) P_S(\alpha_i) = \sum_{i=1}^{n} (y, \beta_i) \beta_i \,,
	\]
	as we wanted to show.
\end{proof}

\begin{remark}
	Even when the vectors \(\alpha_i\) are non-zero and pairwise distinct, their projections \(\beta_i = P_S(\alpha_i)\) may vanish or coincide. The definition of a tight frame (Definition~\ref{def:tightframe}) is intentionally permissive and places no restrictions on the vectors \((\alpha_i)_{i=1}^n\) so that Lemma~\ref{lem:projS} applies without further assumptions.
\end{remark}

\begin{example}\label{ex:naimark}
	Let \(S \subset \R^n\) be a vector subspace of dimension \(d\). Denote by \(P_S\) the orthogonal projection onto \(S\). Let \((e_i)_{i=1}^n\) be the standard basis of \(\R^n\).
	Then \((\alpha_i)_{i=1}^n\) with \(\alpha_i = P_S(e_i)\) is a normalized tight frame for \(S \cong \R^d\).
\end{example}

\begin{remark}
	Every normalized tight frame for \(\R^d\) is of the form given by Example~\ref{ex:naimark}. This is a particular instance of Na\u{\i}mark's theorem \cite[Theorem 2.2 and Exercise 2.26]{waldron}.
\end{remark}

\subsection{Trace formula and the stability inequalities}

\begin{lemma}\label{lem:trxox}
    For \(x \in \R^d\) we have
    \begin{equation}\label{eq:trxox}
        \tr (x \otimes x) = |x|^2 \,.
    \end{equation}
\end{lemma}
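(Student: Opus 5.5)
The claim \(\tr(x\otimes x)=|x|^2\) is a direct computation, and I will do it in coordinates using the matrix description \(x \otimes x = x\,x^\top\) recorded just before Definition~\ref{def:tightframe}.

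Write \(x=(x_1,\ldots,x_d)^\top\) in the standard basis \(e_1,\ldots,e_d\). The \((k,l)\) entry of \(x\,x^\top\) is \(x_k x_l\), so the diagonal entries are \(x_k^2\). Summing them gives \(\tr(x\otimes x)=\sum_{k=1}^d x_k^2 = x^\top x = (x,x)=|x|^2\).

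For a basis-free version, take any orthonormal basis \((e_k)\) and write
\[
\tr(x\otimes x)=\sum_k \bigl((x\otimes x)e_k,e_k\bigr)=\sum_k (x,e_k)^2 .
\]
By Parseval's identity, which is the statement that the orthonormal basis is a normalized tight frame (Lemma~\ref{lem:tfequiv} with \(\lambda=1\)), the right-hand side equals \(|x|^2\).

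No step is a genuine obstacle. The only point to check is that \(\tr\) is taken on \(\R^d\) with the endomorphism convention \((x\otimes x)(y)=(x,y)\,x\). This identity will then feed into the trace formula: summing it over a normalized tight frame gives \(\sum_i|\alpha_i|^2=\tr \Id = d\).
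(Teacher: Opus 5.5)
Your proposal is correct and your main argument is exactly the paper's proof: the diagonal entries of \(x\,x^\top\) are \(x_k^2\), and summing them gives \(|x|^2\). The basis-free variant via Parseval is also valid and not circular, since Lemma~\ref{lem:tfequiv} does not depend on this lemma; it is optional.
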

\begin{proof}
    The diagonal elements of the matrix \((x x^\top)_{ij} = x_i x_j\) are the squares of the components of \(x\), adding them up gives Equation~\eqref{eq:trxox}.
\end{proof}

The next result is known as the \emph{trace formula} \cite[Proposition 2.1]{waldron}.

\begin{lemma}\label{lem:traceformula}
	Let \((\alpha_i)_{i=1}^n\) be a normalized tight frame for \(\R^d\). Then
	\begin{equation}\label{eq:traceformula}
	\sum_{i=1}^{n} |\alpha_i|^2 = d \,.
	\end{equation}
\end{lemma}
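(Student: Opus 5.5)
The plan is to take the trace of both sides of the defining identity \eqref{eq:tightframe} with \(\lambda = 1\), that is, of
\[
\sum_{i=1}^{n} \alpha_i \otimes \alpha_i = \Id \,,
\]
and then compute each side separately.

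On the right-hand side, \(\tr(\Id) = d\), since \(\Id\) is the identity endomorphism of \(\R^d\).

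On the left-hand side, the trace is linear, so it equals \(\sum_{i=1}^n \tr(\alpha_i \otimes \alpha_i)\). By Lemma~\ref{lem:trxox}, each summand is \(|\alpha_i|^2\). Equating the two sides gives \eqref{eq:traceformula}.

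This argument is valid regardless of whether some \(\alpha_i\) vanish or coincide, since it uses nothing beyond the operator identity. I do not expect any obstacle here.
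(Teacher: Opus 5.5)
Your proposal is correct and matches the paper's proof: take the trace of \(\sum_{i=1}^n \alpha_i \otimes \alpha_i = \Id\), use linearity of the trace and Lemma~\ref{lem:trxox} on the left-hand side, and \(\tr(\Id)=d\) on the right-hand side.
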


\begin{proof}
    Taking the trace in Equation~\eqref{eq:tightframe} with \(\lambda=1\) we have
    \[
    \sum_{i=1}^n \tr (\alpha_i \otimes \alpha_i) =  d \,.
    \]
    By Lemma~\ref{lem:trxox}, \(\tr (\alpha_i \otimes \alpha_i) = |\alpha_i|^2\),
	and Equation~\eqref{eq:traceformula} follows.
\end{proof}

The next result bounds the sum of the squared lengths of the vectors of a normalized tight frame contained in a given subspace in terms of its dimension. 

\begin{proposition}\label{prop:stabineq}
    Suppose that \((\alpha_i)_{i=1}^n\) is a normalized tight frame for \(\R^d\).\vspace{1.5mm} \\
	\textup{(i)} 
    If  \(S \subset \R^d\) is a vector subspace then the following inequality holds:
	\begin{equation}\label{eq:stabineq}
	\sum_{i | \alpha_i \in S} |\alpha_i|^2  \leq \dim S \,.	
	\end{equation}
	\textup{(ii)} Equality holds in~\eqref{eq:stabineq} if and only if each \(\alpha_i\) lies either in \(S\) or in \(S^{\perp}\).
\end{proposition}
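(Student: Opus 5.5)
The plan is to combine the projection lemma (Lemma~\ref{lem:projS}) with the trace formula (Lemma~\ref{lem:traceformula}). Let \(P_S\) denote the orthogonal projection onto \(S\), and set \(\beta_i = P_S(\alpha_i)\). By Lemma~\ref{lem:projS}, \((\beta_i)_{i=1}^n\) is a normalized tight frame for \(S\). Applying Lemma~\ref{lem:traceformula} inside \(S \cong \R^{\dim S}\) then gives
\[
\sum_{i=1}^n |P_S(\alpha_i)|^2 = \dim S \,.
\]

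For part (i), split this sum into two groups of indices: those with \(\alpha_i \in S\), and the remaining ones. For \(\alpha_i \in S\) we have \(P_S(\alpha_i)=\alpha_i\), so these terms contribute exactly \(\sum_{i\,|\,\alpha_i\in S}|\alpha_i|^2\). All other terms are non-negative. Hence
\[
\sum_{i\,|\,\alpha_i\in S}|\alpha_i|^2 = \dim S - \sum_{i\,|\,\alpha_i\notin S}|P_S(\alpha_i)|^2 \le \dim S \,,
\]
which is~\eqref{eq:stabineq}.

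For part (ii), the identity above shows that equality holds if and only if \(P_S(\alpha_i)=0\) for every \(i\) with \(\alpha_i\notin S\). The condition \(P_S(\alpha_i)=0\) is equivalent to \(\alpha_i \in S^\perp\). So equality holds exactly when each \(\alpha_i\) lies in \(S\) or in \(S^\perp\). The converse direction is contained in the same identity, since if every \(\alpha_i\) lies in \(S\) or in \(S^\perp\) then the subtracted sum vanishes.

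There is no real obstacle here. The only point needing care is the degenerate case \(S=\{0\}\). There both sides of~\eqref{eq:stabineq} are zero (only zero vectors lie in \(S\)), so the inequality holds trivially. The equality statement is also trivially true, since every vector lies in \(S^\perp=\R^d\).
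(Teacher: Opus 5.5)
Your proposal is correct and follows essentially the same route as the paper: project the frame onto \(S\) via Lemma~\ref{lem:projS}, apply the trace formula to obtain \(\sum_i |P_S(\alpha_i)|^2 = \dim S\), and read off both the inequality and the equality case (\(P_S(\alpha_i)=0\), i.e.\ \(\alpha_i \in S^\perp\), for every \(\alpha_i \notin S\)). Your separate check of the degenerate case \(S=\{0\}\) is harmless and not needed in the paper's version.
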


\begin{proof}
	(i) By Lemma~\ref{lem:projS}, the sequence \((\beta_i)_{i=1}^n\) with \(\beta_i = P_S(\alpha_i)\) is a normalized tight frame for \(S\). By Lemma~\ref{lem:traceformula} we have \(\sum_{i=1}^{n}|\beta_i|^2 = \dim S\).
	Since \(\alpha_i = \beta_i\) for all \(i\) such that \(\alpha_i \in S\), we have
	\[
	\sum_{i | \alpha_i \in S} |\alpha_i|^2  = \sum_{i | \alpha_i \in S} |\beta_i|^2 \leq \sum_{i=1}^{n} |\beta_i|^2 = \dim S \,.
	\]
	Moreover, equality holds if and only if \(\beta_i = 0\) (equivalently  \(\alpha_i \in S^{\perp}\)) for all \(i\) such that \(\alpha_i \notin S\), thus proving (ii).
\end{proof}

We refer to~\eqref{eq:stabineq} as the \emph{stability inequalities}, c.f. \cite[Theorem 11.2]{dolgachev}.

\section{Barthe's Theorem}\label{sec:barthe}

This section recalls the form of Barthe's theorem \cite{barthe} that we need in the construction of Euclidean \(\vee\)-systems
from real PK arrangements (Theorem~\ref{thm:fromPKtoV}).
We follow the presentation of this theorem given in \cite{aks}.
We begin with vector configurations, irreducibility,
and the matroid base polytope. Barthe's theorem is then used to pass from a weight vector in the relative interior of the base polytope to a normalized tight frame
in the same projective class with prescribed squared lengths. We also recall the corresponding uniqueness statement, which is used to show uniqueness up to orthogonal projective equivalence of the \(\vee\)-system associated to a real PK arrangement. 

\subsection{Vector configurations}

\begin{definition}\label{def:vc}
	A \emph{vector configuration} in \(\R^d\) is a finite set \(\mA = \{\alpha_i\}_{i=1}^n\) of vectors \(\alpha_i \in \R^d\). The vector configuration \(\mA\) is \emph{simple} if the vectors \(\alpha_i\) are non-zero and pairwise non-proportional. The vector configuration \(\mA\) is \emph{spanning} if the vectors in \(\mA\) span \(\R^d\).
\end{definition}

\begin{center}
	\emph{Throughout this paper, all our vector configurations are assumed to be simple and spanning. (In particular, not every tight frame corresponds to such a vector configuration.)}
\end{center}

\begin{definition}\label{def:irrvc}
	Let \(\mA\) be a vector configuration in \(\R^d\). We say that \(\mA\) is \emph{reducible} if there is a
	non-trivial direct sum decomposition \(\R^d = P \oplus Q\) such that \(\mA = \left(\mA \cap P \right) \cup \left(\mA \cap Q\right)\). We say that \(\mA\) is \emph{irreducible} if it is not reducible.
\end{definition}

\begin{remark}
	Let \(\mA = \{\alpha_i\}_{i=1}^n\) be a vector configuration in \(\R^d\). Let \(M = M[\mA]\) be the \emph{vector matroid} associated to \(\mA\) whose ground set is \([n] = \{1, \ldots, n\}\) and the independent subsets are given by \(I \subset [n]\) such that the vectors \(\{\alpha_i \mid i \in I\}\) are linearly independent. Then the vector configuration \(\mA\) is irreducible if and only if the matroid \(M\) is \emph{connected}, see Exercise 7 in \cite[Chapter 4.1]{oxley}. In relation with the $\vee$-systems the matroidal approach was used in \cite{lechtenfeld_et_al}, \cite{schreiberveselov}.
\end{remark}

\begin{lemma}\label{lem:rin01}
	Let \(\mA = \{\alpha_i\}_{i=1}^n\) be an irreducible vector configuration in \(\R^d\) with \(d \geq 2\). Moreover, suppose that \(\mA\) is also a normalized tight frame for \(\R^d\). Then \(|\alpha_i|^2 \in (0,1)\) for all \(\alpha_i \in \mA\).
\end{lemma}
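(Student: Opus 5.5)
The idea is to apply the stability inequality of Proposition~\ref{prop:stabineq} to the line spanned by a single vector, and then use irreducibility to rule out the equality case. Positivity is immediate: \(\mA\) is simple, so every \(\alpha_i \neq 0\) and hence \(|\alpha_i|^2 > 0\). All the work is in showing \(|\alpha_i|^2 < 1\).

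Fix \(\alpha_i \in \mA\) and let \(S = \R\alpha_i\), a one-dimensional subspace. Since \(\mA\) is simple, no other \(\alpha_j\) is proportional to \(\alpha_i\), so \(\alpha_i\) is the only vector of \(\mA\) lying in \(S\). Proposition~\ref{prop:stabineq}(i) then gives
\[
|\alpha_i|^2 = \sum_{j \,|\, \alpha_j \in S} |\alpha_j|^2 \leq \dim S = 1 \,.
\]

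Now suppose equality \(|\alpha_i|^2 = 1\) holds. By Proposition~\ref{prop:stabineq}(ii), every \(\alpha_j\) lies either in \(S\) or in \(S^\perp\). Take \(P = S\) and \(Q = S^\perp\). Then \(\R^d = P \oplus Q\), and this decomposition is non-trivial because \(\dim P = 1\) and \(\dim Q = d-1 \geq 1\), using \(d \geq 2\). Moreover \(\mA = (\mA \cap P) \cup (\mA \cap Q)\). So \(\mA\) is reducible in the sense of Definition~\ref{def:irrvc}, contradicting the hypothesis. Hence \(|\alpha_i|^2 < 1\), and therefore \(|\alpha_i|^2 \in (0,1)\).

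The only point needing care is the non-triviality of the decomposition \(S \oplus S^\perp\). This is exactly where the assumption \(d \geq 2\) enters: it guarantees \(S^\perp \neq 0\), so the splitting genuinely witnesses reducibility. Apart from that, the argument is a direct application of the equality case of the stability inequality.
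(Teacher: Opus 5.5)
Your proposal is correct and follows essentially the same route as the paper: it applies Proposition~\ref{prop:stabineq}(i) to the line \(\R\alpha_i\) and then uses the equality case (ii), together with irreducibility and \(d \geq 2\), to get the strict inequality. You also spell out the positivity of \(|\alpha_i|^2\) via simplicity, which the paper leaves implicit.
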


\begin{proof}
	Take \(\alpha_i \in \mA\).
	Let \(S \subset \R^d\) be the \(1\)-dimensional vector subspace spanned by \(\alpha_i\). Proposition~\ref{prop:stabineq} (i) applied to \(S\) implies that \(|\alpha_i|^2 \leq 1\). Since \(\mA\) is irreducible and \(S\) is a proper subspace of \(\R^d\) (because \(d \geq 2\)), Proposition~\ref{prop:stabineq} (ii) implies that \(|\alpha_i|^2 < 1\).
\end{proof}

\subsection{The base polytope of a vector configuration}

\subsectionstart

\noindent 
Let \(\mA = \{\alpha_1, \ldots, \alpha_n\}\) be a vector configuration in \(\R^d\).
For a subset \(I \subset [n]\), we let \(\mathbf{1}_I\) be the \(0/1\) vector in \(\R^n\) given by the indicator function of \(I\),
\[
\mathbf{1}_I(i) = \begin{cases}
1 &\textup{if } i \in I \,, \\
0 &\textup{if } i \notin I \,.
\end{cases}
\]

\begin{definition}\label{def:basepolytope}
	The \emph{base polytope} of \(\mA\) is the convex hull \(P(\mA) \subset \R^n\) of the indicator functions \(\mathbf{1}_I\) where \(I\) ranges over all subsets \(I \subset [n]\) with \(|I|=d\) such that the vectors \(\{\alpha_i \mid i \in I\}\) form a basis of \(\R^d\).
\end{definition}

The following dual description of the base polytope is well-known.

\begin{lemma}[{\cite[Corollary 40.2d]{schrijver}}]\label{lem:basepolytope-ineq}
	The base polytope \(P(\mA)\) of \(\mA \subset \R^d\) consists of all points \(w = (w_1, \ldots, w_n)\) in \((\R_{\geq 0})^n\) such that
	\begin{equation}\label{eq:basepolytope-ineq}
	\sum_{i=1}^n w_i = d \quad\textup{and} \quad \sum_{i \mid \alpha_i \in S} w_i \leq \dim S
	\end{equation}
	for every linear subspace \(S \subset \R^d\) spanned by vectors in \(\mA\).
\end{lemma}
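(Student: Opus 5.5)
The statement is the classical description of a matroid base polytope by its rank inequalities, specialised to the vector matroid \(M[\mA]\). I would prove the two inclusions separately.

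\emph{The polytope lies in the inequality set.} The inequality set is convex, so it suffices to check the vertices \(\mathbf{1}_I\), where \(\{\alpha_i\}_{i\in I}\) is a basis. Such a vertex has nonnegative coordinates and \(\sum_i(\mathbf{1}_I)_i=|I|=d\). For a subspace \(S\), the vectors \(\alpha_i\) with \(i\in I\) and \(\alpha_i\in S\) are linearly independent and lie in \(S\). Hence there are at most \(\dim S\) of them, which is exactly \(\sum_{i\mid\alpha_i\in S}(\mathbf{1}_I)_i\leq\dim S\).

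\emph{The inequality set lies in the polytope.} First, restricting \(S\) to spans of vectors in \(\mA\) loses nothing. For an arbitrary subspace \(S\), let \(S'=\Span\{\alpha_i\in S\}\subset S\). The sets \(\{i\mid \alpha_i\in S\}\) and \(\{i\mid\alpha_i\in S'\}\) coincide, and \(\dim S'\leq \dim S\). So the inequalities for flats imply those for all subspaces. Writing \(\operatorname{rk}(A)=\dim\Span\{\alpha_i\mid i\in A\}\), the conditions become
\[
w\geq 0,\qquad w([n])=\operatorname{rk}([n]),\qquad w(A)\leq \operatorname{rk}(A)\ \text{for all } A\subset[n],
\]
where \(w(A)=\sum_{i\in A}w_i\). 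For non-closed \(A\) this follows by passing to the closure \(\bar A\): since \(w\geq0\), we have \(w(A)\leq w(\bar A)\leq\operatorname{rk}(\bar A)=\operatorname{rk}(A)\).

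\emph{Edmonds' theorem.} Since \(\operatorname{rk}\) is submodular, nondecreasing and normalised, Edmonds' theorem applies. It says that the polymatroid \(\{w\geq0 : w(A)\leq \operatorname{rk}(A)\}\) has integral vertices, namely the indicator vectors of independent sets. I would prove this by the greedy algorithm: for any weight vector \(c\), maximise \(c\cdot w\) over the polymatroid. Order the elements by decreasing \(c\) and set \(w_{e_k}=\operatorname{rk}(\{e_1,\ldots,e_k\})-\operatorname{rk}(\{e_1,\ldots,e_{k-1}\})\) while \(c_{e_k}>0\). This \(w\) is feasible by submodularity, and it is optimal by constructing a dual solution supported on the chain of prefixes. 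As a result, every linear functional attains its maximum at a 0/1 independent-set vector. Intersecting with the face \(w([n])=d\) keeps only the vectors with \(|I|=d\), which are the bases. That face is therefore \(\operatorname{conv}\{\mathbf{1}_I\}=P(\mA)\).

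The main obstacle is this last step: showing that the vertices are integral, i.e.\ the dual/greedy optimality argument. Since the paper cites \cite[Corollary 40.2d]{schrijver}, the cleanest route is to quote Edmonds' polymatroid theorem for the rank function of \(M[\mA]\). That rank function is submodular by the dimension formula \(\dim(U+V)+\dim(U\cap V)=\dim U+\dim V\).
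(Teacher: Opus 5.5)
Your proposal is correct and follows essentially the same route as the paper, which gives no argument of its own and simply cites Schrijver's Corollary 40.2d; that corollary rests on Edmonds' matroid polytope theorem, proved via the greedy algorithm with a dual solution supported on the chain of prefixes, and then one passes to the face $w([n])=d$, exactly as you do. Your closure step, which uses $w\geq 0$ to pass from arbitrary subsets to flats (i.e.\ to subspaces spanned by vectors of $\mA$), correctly reduces Edmonds' subset inequalities to the flat inequalities in the statement.
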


It follows immediately from Definition~\ref{def:basepolytope} that the base polytope \(P(\mA)\) is contained in the affine hyperplane \(\{\sum_i w_i = d \} \subset \R^n\), thus \(\dim P(\mA) \leq n-1\).
The next result characterizes the vector configurations for which \(\dim P(\mA) = n-1\).

\begin{lemma}[{\cite[Theorem 1.12.9]{borovikgelfandneil}}]\label{lem:basepolytope-dim}
	The base polytope has maximal possible dimension \(\dim P(\mA) = n-1\) if and only if the vector configuration \(\mA\) is irreducible.
\end{lemma}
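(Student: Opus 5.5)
The plan is to prove both directions by studying the affine hull of \(P(\mA)\) and the face structure described in Lemma~\ref{lem:basepolytope-ineq}. Since \(P(\mA)\) lies in the hyperplane \(\{\sum_i w_i = d\}\), the claim \(\dim P(\mA)=n-1\) is equivalent to the statement that \(\sum_i w_i = d\) is, up to scaling, the only linear equation satisfied by all indicator vectors \(\mathbf{1}_I\) of bases \(I\).

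\emph{Reducible \(\Rightarrow\) not full-dimensional.} Suppose \(\R^d = P\oplus Q\) is a nontrivial decomposition with \(\mA = (\mA\cap P)\cup(\mA\cap Q)\). Because \(\mA\) is spanning, \(\mA\cap P\) spans \(P\) and \(\mA\cap Q\) spans \(Q\). Every basis drawn from \(\mA\) must therefore contain exactly \(\dim P\) vectors from \(P\), since those vectors are independent in \(P\) and the remaining ones are independent in \(Q\). Hence every vertex satisfies \(\sum_{i\mid\alpha_i\in P} w_i = \dim P\). This equation is not a multiple of \(\sum_i w_i = d\), because \(\mA\cap Q\) is nonempty. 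So \(\dim P(\mA)\le n-2\).

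\emph{Irreducible \(\Rightarrow\) full-dimensional.} Let \(c\in\R^n\) with \(\sum_i c_i\mathbf{1}_I(i)\) equal to a constant over all bases \(I\). We want to show that \(c\) is a constant vector.

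First consider a basis \(I\), an element \(i\in I\), and an element \(j\notin I\) such that \(I-i+j\) is also a basis. Comparing the two sums gives \(c_i=c_j\).

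Next, define a graph on \([n]\) with an edge \(i\sim j\) whenever such a basis exchange exists for some basis. If \(\alpha_j\) is not a loop (true since the configuration is simple), then \(j\) lies in some basis \(I'\). Write \(\alpha_j\) in terms of a basis \(I\not\ni j\); every \(i\in I\) with nonzero coefficient is exchangeable with \(j\).

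The key step is the standard matroid fact that the relation "\(i,j\) lie in a common circuit" is an equivalence relation whose classes are the connected components of the matroid. Moreover, \(i,j\) lie in a common circuit \(C\) exactly when, for a basis \(I\supset C-j\), the element \(i\) appears in the fundamental circuit of \(j\). That is precisely the exchange relation above. Connectedness of \(M[\mA]\), which is equivalent to irreducibility by the earlier remark, then makes the exchange graph connected, so \(c\) is constant. In linear-algebra terms, a component \(T\) of the exchange graph would give a subspace \(P=\Span(\alpha_i: i\in T)\) and \(Q=\Span(\text{rest})\) whose dimensions add to \(d\), contradicting irreducibility.

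The main obstacle is this last step: showing that a union of exchange classes spans a direct summand. For that I will use the rank identity \(r(T)+r([n]\setminus T)=d\). This follows by fixing a basis \(I\) and noting that each \(\alpha_j\) with \(j\in T\setminus I\) is expressed using only elements of \(I\cap T\), since its fundamental circuit stays within its class. Alternatively, one can simply cite \cite[Theorem 1.12.9]{borovikgelfandneil}.
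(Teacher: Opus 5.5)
Your proof is correct. It also does more than the paper, which gives no argument for this lemma. The paper simply imports \cite[Theorem 1.12.9]{borovikgelfandneil}, the matroid-theoretic statement relating the dimension of the base polytope to connectedness of \(M[\mA]\). It combines this with the earlier remark that irreducibility of \(\mA\) is the same as connectedness of \(M[\mA]\).

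You instead work directly from the vertex description of \(P(\mA)\). In the reducible case you exhibit a second affine equation \(\sum_{i\mid\alpha_i\in P}w_i=\dim P\) whose linear part is not proportional to \(\mathbf 1\). In the irreducible case you use single-element basis exchanges to show that any \(c\) with \(c\cdot\mathbf 1_I\) constant over all bases must itself be constant.

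Of your two ways of finishing, the fundamental-circuit one is preferable. Every \(i\in I\) occurring with non-zero coefficient in the expansion of \(\alpha_j\) is adjacent to \(j\). So a connected component \(T\) of the exchange graph satisfies \(\Span\{\alpha_i:i\in T\}=\Span\{\alpha_i:i\in I\cap T\}\), and likewise for the complement. This gives a splitting \(\R^d=P\oplus Q\) as in Definition~\ref{def:irrvc} directly. It bypasses both the cited theorem and the identification of irreducibility with matroid connectedness from \cite{oxley}. The matroid finish is also valid, and it even shows the exchange graph is complete, because lying in a common circuit is exactly your exchange relation.

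The citation buys brevity. Your route buys a self-contained elementary proof. With no extra work it also gives \(\dim P(\mA)=n-k\), where \(k\) is the number of exchange classes: each class contributes one equation, and these have disjoint supports.

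Finally, Lemma~\ref{lem:basepolytope-ineq} appears in your plan but is never used; only the vertices are needed.
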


\subsection{Barthe's theorem}

\begin{theorem}[Barthe \cite{barthe}]\label{thm:barthe}
	Let \(\mA = \{\alpha_i\}_{i=1}^n\) be a vector configuration in \(\R^d\) and let \(w = (w_1, \ldots, w_n)\) be a weight vector in \(\R^n\) such that \(w_i > 0\) for all \(i\) and \(\sum_i w_i = d\). Then the following are equivalent.
	\begin{enumerate}[leftmargin=*, label=\textup{(\arabic*)}]
		\item There exist a linear transformation \(F \in \GL(d)\) and non-zero scalars \(\lambda_i \in \R\) such that the vector configuration \(\mB = \{\beta_i\}_{i=1}^n\) with \(\beta_i = \lambda_i \, F(\alpha_i)\) satisfies:
		\begin{enumerate}[label=\textup{(\roman*)}]
			\item \(\mB\) is a normalized tight frame for \(\R^d\);
			\item \(|\beta_i|^2 = w_i\) for all \(i\).
		\end{enumerate}
		\item The weight vector \(w\) belongs to the relative interior of the base polytope \(P(\mA)\), i.e., \(w \in \relin(P(\mA))\), where \(\relin(P(\mA))\) is the interior of \(P(\mA)\) in the minimal affine subspace of \(\R^n\) that contains it.
	\end{enumerate}
\end{theorem}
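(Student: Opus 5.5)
The plan is to prove the two implications separately. For (1)\(\Rightarrow\)(2) we use the stability inequalities of Proposition~\ref{prop:stabineq}. For (2)\(\Rightarrow\)(1) we use a convex variational argument in the spirit of the moment map / Kempf--Ness approach. Throughout, note that \(F\in\GL(d)\) together with non-zero rescalings \(\lambda_i\) preserves the vector matroid. In particular \(P(\mB)=P(\mA)\), and the subspaces spanned by vectors of \(\mB\) are exactly the images \(F(S)\) of the subspaces \(S\) spanned by vectors of \(\mA\).

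\emph{(1)\(\Rightarrow\)(2).} Suppose \(\mB\) is a normalized tight frame with \(|\beta_i|^2=w_i\). By Lemma~\ref{lem:traceformula}, \(\sum_i w_i=d\). For each subspace \(S\) spanned by vectors of \(\mA\), Proposition~\ref{prop:stabineq}(i) applied to \(F(S)\) gives \(\sum_{\alpha_i\in S}w_i\le\dim S\). Together with \(w_i>0\), Lemma~\ref{lem:basepolytope-ineq} then gives \(w\in P(\mA)\).

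For the relative interior, take any facet-type inequality from Lemma~\ref{lem:basepolytope-ineq} that is an equality at \(w\). We must show it is an equality on all of \(P(\mA)\). The inequalities \(w_i\ge0\) are strict at \(w\), so only the subspace inequalities need attention. If \(\sum_{\alpha_i\in S}w_i=\dim S\), Proposition~\ref{prop:stabineq}(ii) says every \(\beta_i\) lies in \(F(S)\) or in \(F(S)^\perp\). Set \(T=F^{-1}(F(S)^\perp)\). Then \(\R^d=S\oplus T\) and \(\mA=(\mA\cap S)\cup(\mA\cap T)\). Consequently every basis \(I\) of \(\mA\) contains exactly \(\dim S\) elements of \(S\), so the equality holds at every vertex \(\mathbf 1_I\), hence on all of \(P(\mA)\). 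Since every inequality that is active at \(w\) is active on the whole polytope, \(w\in\relin P(\mA)\).

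\emph{(2)\(\Rightarrow\)(1).} For \(t\in\R^n\) define
\[
M(t)=\sum_{i=1}^n e^{t_i}\,\alpha_i\otimes\alpha_i,\qquad \Phi(t)=\log\det M(t)-\sum_i w_it_i .
\]
By Cauchy--Binet, \(\det M(t)=\sum_I c_I\, e^{(\mathbf 1_I,t)}\), where the sum runs over bases \(I\) and \(c_I=\det(\alpha_i)_{i\in I}^2>0\). Thus \(\Phi\) is a log-sum-exp function minus a linear function, and so it is convex. Its partial derivatives are
\[
\partial_i\Phi = e^{t_i}\,\alpha_i^\top M(t)^{-1}\alpha_i - w_i .
\]
At a critical point \(t\), put \(F=M(t)^{-1/2}\) and \(\lambda_i=e^{t_i/2}\), so that \(\beta_i=\lambda_iF\alpha_i\). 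Then \(\sum_i\beta_i\otimes\beta_i=FM(t)F=\Id\) and \(|\beta_i|^2=w_i\), which is exactly (1). It therefore suffices to show that \(\Phi\) attains its minimum.

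This existence step is the main obstacle. The plan is a standard Newton-polytope coercivity argument. Along a ray \(t=t_0+sv\) with \(s\to\infty\), the growth rate of \(\Phi\) is
\[
\max_I(\mathbf 1_I,v)-(w,v)=h_{P}(v)-(w,v),
\]
where \(h_P\) is the support function of \(P=P(\mA)\). Because \(w\in\relin P\), this rate is \(>0\) unless \(v\) is orthogonal to the affine span of \(P\). For such \(v\), \(\Phi\) is constant along the ray, since \((\mathbf 1_I,v)=(w,v)\) for all \(I\). Hence \(\Phi\) descends to a function on the quotient of \(\R^n\) by the lineality space \((\mathrm{aff}\,P-w)^\perp\), and on that quotient it is coercive. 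Making this uniform in direction uses compactness of the unit sphere in the quotient together with the strict positivity of \(h_P(v)-(w,v)\) there, plus the convexity of \(\Phi\). A coercive continuous convex function attains its minimum, which gives the critical point and finishes the proof. The uniqueness statement mentioned in the paper would follow from strict convexity of \(\Phi\) modulo the lineality space, but it is not needed for the equivalence.
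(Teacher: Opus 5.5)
Your proof is correct, and it takes a genuinely different route from the paper. The paper proves very little here. It only notes that (1) is equivalent, via \(\beta_i=\sqrt{w_i}\,F(\alpha_i)/|F(\alpha_i)|\), to \(\mA\) admitting a radial \(w\)-isotropic position. It then cites \cite[Theorem~A.1]{aks} for the equivalence with \(w\in\relin(P(\mA))\).

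You instead prove both directions and never pass through radial isotropic position.

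\textbf{Necessity.} You use the paper's own stability inequalities (Proposition~\ref{prop:stabineq}). The equality case yields the splitting \(\R^d=S\oplus F^{-1}(F(S)^\perp)\). This forces every active subspace inequality to be tight at each vertex \(\mathbf 1_I\), in the same spirit as Lemma~\ref{lem:P2}.

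\textbf{Sufficiency.} Your potential \(\Phi\) is, after \(t_i\mapsto t_i+\log|\alpha_i|^2\) and up to an additive constant, the function \(\varphi\) that the paper imports from \cite{aks} in the proof of Proposition~\ref{prop:msptf}. Where the paper relies on \cite[Lemmas~2.3 and~2.6]{aks}, your Cauchy--Binet expansion \(\det M(t)=\sum_I c_I e^{(\mathbf 1_I,t)}\) gives convexity immediately. It also shows why exactly the relative interior appears: the asymptotic slope in direction \(v\) is \(h_P(v)-(w,v)\). The lineality space \((\mathrm{aff}\,P-w)^\perp\) handles reducible configurations with no extra hypothesis.

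The uniformity step you only sketch is actually one line. From \(\log\sum_I c_Ie^{(\mathbf 1_I,t)}\ge\min_I\log c_I+h_P(t)\) you get \(\Phi(t)\ge C+m|t|\) for all \(t\) in the direction space of \(\mathrm{aff}\,P\). Here \(m>0\) is the minimum of \(h_P(v)-(w,v)\) over unit vectors \(v\) in that space. So convexity is not even needed to produce a minimizer. By invariance along the lineality space, that minimizer is a global minimizer on \(\R^n\), hence a critical point.

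In short, the paper's version buys brevity. Yours is self-contained and ties both directions explicitly to the base polytope.
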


\begin{proof}
	We follow the presentation in \cite{aks}. The vector configuration \(\mA = \{\alpha_i\}_{i=1}^n\) can be put into \emph{radial \(w\)-isotropic position} \cite[Definitions 1.2 and 1.3]{aks} if there is an invertible linear transformation \(F \in \GL(d)\) such that
	\begin{equation}\label{eq:rip}
	\sum_{i=1}^n w_i \, \left( \frac{F(\alpha_i)}{|F(\alpha_i)|} \otimes \frac{F(\alpha_i)}{|F(\alpha_i)|}  \right) = \Id \,.
	\end{equation}
	In this case, the vector configuration \(\mB = \{\beta_i\}_{i=1}^n\) with
	\[
	\beta_i = \frac{\sqrt{w_i}}{|F(\alpha_i)|} \cdot F(\alpha_i) 
	\]
	satisfies (i) and (ii) in condition (1) of the theorem:
	\begin{equation}\label{eq:betas}
		\sum_{i=1}^{n} \beta_i \otimes \beta_i = \Id \quad \textup{and} \quad |\beta_i|^2 = w_i \,.
	\end{equation}
	Conversely, if there is \(F \in \GL(d)\) and non-zero \(\lambda_i \in \R\) such that the vectors \(\beta_i = \lambda_i \, F(\alpha_i)\) satisfy Equation~\eqref{eq:betas}, then
	\[
	\sum_{i=1}^n w_i \, \left( \frac{F(\alpha_i)}{|F(\alpha_i)|} \otimes \frac{F(\alpha_i)}{|F(\alpha_i)|}  \right) = \sum_{i=1}^{n} w_i \, \left( \frac{\beta_i}{|\beta_i|} \otimes \frac{\beta_i}{|\beta_i|} \right) = \Id \,.
	\]
	This shows that item (1) of the theorem is equivalent to the statement: 
	\begin{center}
		(1)' \(\mA\) can be put into radial \(w\)-isotropic position.
	\end{center}
	The equivalence (1)' \(\iff\) (2) then follows from \cite[Theorem A.1]{aks}.
\end{proof}

\subsection{Uniqueness in Barthe's theorem}

\begin{definition}
	Let \(\mA = \{\alpha_i\}_{i=1}^n\) and  \(\mB = \{\beta_i\}_{i=1}^n\) be vector configurations in \(\R^d\). We say that \(\mA\) and \(\mB\) are \emph{projectively equivalent} if there exist an invertible linear transformation \(F \in \GL(d)\) and non-zero scalars \(\lambda_i \in \R\) such that \(\beta_i = \lambda_i \, F(\alpha_i)\) for all \(i\). Equivalently, \(\mA\) and \(\mB\) are \emph{projectively equivalent} if there exists \(F \in \GL(d)\) such that \(M_i = F(L_i)\) for all \(i\), where \(M_i\) and \(L_i\) are the \(1\)-dimensional subspaces \(M_i = \R \cdot \beta_i\) and \(L_i = \R \cdot \alpha_i\).
\end{definition}

\begin{remark}
	If \(\mA\) and \(\mB\) are projectively equivalent then their base polytopes are equal.
\end{remark}

\begin{definition}\label{def:orthogonally-proj-equiv}
	Let \(\mA = \{\alpha_i\}_{i=1}^n\) and  \(\mB = \{\beta_i\}_{i=1}^n\) be vector configurations in \(\R^d\). 
	We say that \(\mA\) and \(\mB\) are \emph{orthogonal projectively equivalent} if there exists a linear isometry \(G \in \mathrm{O}(d)\) such that \(\beta_i = \pm \, G(\alpha_i)\) for all \(i\).
\end{definition}

The next result is a uniqueness statement, up to orthogonal projective equivalence, for the tight frames in Barthe's Theorem~\ref{thm:barthe}. 

\begin{proposition}[{\cite[Lemma A.7]{aks}}]  \label{prop:uniqueness}
	Let \(\mV = \{v_i\}_{i=1}^n\) be an irreducible vector configuration in \(\R^d\).
	Suppose that \(\mA = \{\alpha_i\}_{i=1}^n\) and \(\mB = \{\beta_i\}_{i=1}^n\) are vector configurations projectively equivalent to \(\mV\) such that
	\begin{equation}\label{eq:betas2}
	\sum_{i=1}^{n} \alpha_i \otimes \alpha_i = \sum_{i=1}^{n} \beta_i \otimes \beta_i = \Id \quad \textup{and} \quad |\alpha_i|^2 = |\beta_i|^2 \quad \textup{for all } i \,.
	\end{equation}
	Then \(\mA\) and \(\mB\) are orthogonal projectively equivalent.
\end{proposition}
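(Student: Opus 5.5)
The plan is to reduce to the case where both configurations are obtained from \(\mV\) by positive-definite symmetric transformations, and then use irreducibility to force that transformation to be the identity. Write \(\alpha_i = \lambda_i F(v_i)\) and \(\beta_i = \mu_i G(v_i)\) with \(F,G \in \GL(d)\). Set \(T = G F^{-1}\). Then \(\beta_i = c_i\, T(\alpha_i)\) with \(c_i = \mu_i/\lambda_i \neq 0\). So it suffices to show that \(T\) is orthogonal up to the signs \(c_i = \pm 1\).

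By polar decomposition, \(T = U P\) with \(U \in \mathrm{O}(d)\) and \(P\) symmetric positive definite. Replacing \(\mB\) by \(U^{-1}\mB\) preserves both conditions in~\eqref{eq:betas2}, so we may assume \(T = P\). Diagonalize \(P\) with eigenvalues \(p_1 \geq \dots \geq p_d > 0\). The condition \(|\beta_i|^2 = |\alpha_i|^2\) gives \(c_i^2 = |\alpha_i|^2/|P\alpha_i|^2\), and hence
\[
\beta_i \otimes \beta_i = \frac{|\alpha_i|^2}{|P\alpha_i|^2}\, P\alpha_i \otimes P\alpha_i .
\]

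The key step is a convexity argument in the style of the uniqueness part of Brascamp--Lieb/Barthe. Consider the function
\[
\Phi(t) = \sum_i |\alpha_i|^2 \log |e^{tX}\hat\alpha_i|^2,
\qquad
\hat\alpha_i = \alpha_i/|\alpha_i|,
\qquad
P = e^{X},
\]
with \(X\) symmetric, and also impose the normalization \(\det e^{tX}\). I would use the standard function
\[
\Psi(t) = \sum_i w_i \log|e^{tX}\hat\alpha_i|^2 - 2t\,\tr X,
\qquad
w_i = |\alpha_i|^2 .
\]
Its derivative at \(t\) equals \(2\tr\bigl(X(\sum_i w_i \hat u_i\otimes \hat u_i - \Id)\bigr)\), where \(\hat u_i\) are the normalized images of the \(\alpha_i\). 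This derivative vanishes at \(t=0\) because \(\mA\) is a normalized tight frame with \(|\alpha_i|^2 = w_i\). It vanishes at \(t=1\) because \(\mB\) is one, since the images \(\beta_i/|\beta_i|\) are \(\pm\hat u_i(1)\).

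Each term \(\log|e^{tX}\hat\alpha_i|^2\) is convex in \(t\), since it is the log of a sum of exponentials \(\sum_k \langle \hat\alpha_i, e_k\rangle^2 e^{2tx_k}\) in the eigenbasis of \(X\). So \(\Psi'\) is nondecreasing, and vanishing at both endpoints forces \(\Psi'' \equiv 0\) on \([0,1]\). Strict convexity of log-sum-exp fails only when all \(x_k\) with \(\langle\hat\alpha_i,e_k\rangle \neq 0\) are equal. Hence every \(\alpha_i\) lies in a single eigenspace of \(X\).

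If \(X\) has at least two distinct eigenvalues, the eigenspace decomposition gives a nontrivial splitting \(\R^d = E \oplus E'\) with \(\mA \subset E \cup E'\). This contradicts irreducibility of \(\mA\), which is inherited from \(\mV\) since irreducibility is a matroid property preserved by projective equivalence. Thus \(P = p\,\Id\), and then \(c_i = \pm 1/p\), so \(\beta_i = \pm \alpha_i\). Orthogonal projective equivalence follows after restoring \(U\).

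The main obstacle is the convexity step: correctly identifying \(\Psi'\) with the trace pairing against the frame operator, and extracting the equality case. I expect the rest to be routine bookkeeping.
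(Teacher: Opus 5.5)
Your argument is correct, but it follows a different route from the paper, which does not prove the uniqueness itself. The paper passes to unit vectors $x_i\in\R\alpha_i$ and $y_i=F(x_i)/|F(x_i)|$. It notes that $\sum_i w_i\,x_i\otimes x_i=\sum_i w_i\,y_i\otimes y_i=\Id$ with $w_i=|\alpha_i|^2$. It then invokes \cite[Lemma A.7]{aks} as a black box to get $F=\lambda G$ with $G\in\mathrm{O}(d)$.

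You instead prove that lemma. After the polar decomposition $T=UP$, $P=e^{X}$, your $\Psi$ is exactly the Kempf--Ness function along the geodesic $t\mapsto e^{tX}$. The term $-2t\tr X=-\log\det e^{2tX}$ is the scale normalization, which works because $\sum_i w_i=d$. Its derivative $2\tr\bigl(X(\sum_i w_i\hat u_i\otimes\hat u_i-\Id)\bigr)$ is the moment map paired with $X$. It vanishes at $t=0$ and $t=1$ precisely because $\mA$ and $\mB$ are normalized tight frames with equal squared lengths.

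The equality case of log-sum-exp convexity then puts each $\alpha_i$ in an eigenspace of $X$; this uses $w_i>0$, which holds since configurations are simple. Irreducibility, correctly inherited from $\mV$, forces $X$ to be scalar, and $c_i=\pm 1/p$ gives $\beta_i=\pm U\alpha_i$.

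The paper's route buys brevity at the price of an external reference. Yours is self-contained, shows exactly where irreducibility enters, and makes concrete the moment-map picture the paper only mentions in passing at the end of Section~\ref{sec:barthe}.

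In a write-up, drop the aborted definition of $\Phi$ and the phrase about imposing the normalization $\det e^{tX}$, since $\Psi$ already incorporates it.
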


\begin{proof}
	The assumption that \(\mA\) and \(\mB\) are projectively equivalent implies that there is \(F \in \GL(d)\) mapping the lines \(\R \cdot \alpha_i\) to the lines \(\R \cdot \beta_i\) for all \(i\).
	For each \(1 \leq i \leq n\) choose a unit vector \(x_i\) in the line \(\R \cdot \alpha_i\) and let \(y_i = F(x_i)/|F(x_i)|\). Then
	\[
	\alpha_i \otimes \alpha_i = |\alpha_i|^2 \cdot   (x_i \otimes x_i) \quad \textup{and} \quad  \beta_i \otimes \beta_i = |\beta_i|^2 \cdot (y_i \otimes y_i) \,.
	\]
	Using Equation~\eqref{eq:betas2}, we obtain that
	\[
	\sum_{i=1}^{n} w_i \cdot (x_i \otimes x_i) = \sum_{i=1}^{n} w_i \cdot (y_i \otimes y_i) = \Id \,,
	\]
	where \(w_i = |\alpha_i|^2 = |\beta_i|^2\). It then follows from \cite[Lemma A.7]{aks} that there is a linear isometry \(G \in \mathrm{O}(d)\) and \(\lambda>0\) such that \(F = \lambda \cdot G\) and \(y_i = G(x_i)\) for all \(i\). Since \(\alpha_i = \pm \sqrt{w_i} \cdot x_i\) and \(\beta_i = \pm \sqrt{w_i} \cdot y_i\), it follows that \(\beta_i = \pm G(\alpha_i)\) for all \(i\), as wanted.
\end{proof}

\subsection{Moment maps}

\subsectionstart

\noindent
We interpret the tight frame condition as a balancing condition for a weighted configuration of points in projective space. Although this is not used elsewhere in the paper, it places our work within the broader framework of \emph{moment maps} in differential geometry.

Let \(\Phi\) be the map from the unit sphere \(S^{d-1} \subset \R^d\) to the vector space \(\Sym_0(\R^d)\) of trace-free symmetric \(d \times d\) matrices defined by
\begin{equation*}
\Phi(x) =  x \otimes x - \frac{1}d \Id \,.
\end{equation*}
Since \(\Phi(x)=\Phi(-x)\), the map \(\Phi\) defines an embedding of \(\RP^{d-1}\) into \(\Sym_0(\R^d)\). 
Consider now a vector configuration \(\mA = \{\alpha_i\}_{i=1}^n\) in \(\R^d\).
It is easy to check that
\begin{equation}\label{eq:zcm}
	\sum_{i=1}^{n} \alpha_i \otimes \alpha_i = \lambda \cdot \Id \, \textup{ for some } \lambda > 0 \iff
	\sum_{i=1}^n w_i \cdot \Phi(\hat{\alpha}_i)  = 0 \,,
\end{equation}
where \(w_i = |\alpha_i|^2\) and \(\hat{\alpha}_i = \alpha_i / |\alpha_i|\).
Thus the vector configuration \(\mA\) is a tight frame for \(\R^d\) if and only if the centre of mass of the points \(\Phi(\hat{\alpha}_i) \in \Sym_0(\R^d)\) with weights \(w_i\) is equal to \(0\). 	
Furthermore, if the weights \(w_i\) are integers, then~\eqref{eq:zcm} can be interpreted as a zero moment map equation, and Barthe’s Theorem can be recovered from the Kempf-–Ness Theorem \cite[Theorem 5.19]{szek} in Geometric Invariant Theory.

\section{The Hirzebruch quadratic form and the V-conditions}\label{sec:hqf}

This section relates the Hirzebruch quadratic form to the Euclidean \(\vee\)-conditions. We first recall the frame-potential inequality for vector sequences (Proposition~\ref{prop:tfineq}). We then rewrite the Euclidean \(\vee\)-conditions in terms of tight frames on the ambient space and on the
two-dimensional subspaces spanned by the configuration (Lemma~\ref{lem:vstf}). Finally, following the variational perspective of \cite{dbp-hirzebruch}, we compare the \(\vee\)-conditions
with the Hirzebruch quadratic form and obtain Proposition~\ref{prop:hirineq} which is the key to
the proofs of the main theorems.

\subsection{The frame-potential inequality}

\subsectionstart

\noindent Let \((\alpha_i)_{i=1}^n\) be a finite sequence of vectors in \(\R^d\).
Let \(A\) be the symmetric matrix defined by
\begin{equation}
	A = \sum_{i=1}^n \alpha_i \, \alpha_i^{\top}
\end{equation}
also known as the \emph{frame operator} \cite[Chapter 2.4]{waldron}. 
Consider the decomposition of the matrix \(A\) into scalar and trace-free parts given by
\begin{equation}\label{eq:stfdec}
	A = \frac{\tr(A)}{d} \Id + \left( A - \frac{\tr(A)}{d} \Id \right) \,.
\end{equation}
Write \(\lambda = \tr(A) /d\) and \(A^{\circ} = A - \lambda \Id\), so the above decomposition is \(A = \lambda \Id + A^{\circ}\). 

We equip the vector space of symmetric matrices \(\Sym(\R^d)\) with its standard positive definite inner product
\[
(X,Y)=\tr(XY)\,, \quad |X|=\sqrt{\tr(X^2)}\,.
\]
Then the decomposition~\eqref{eq:stfdec} is orthogonal with respect to \(\inn\).

\begin{lemma}\label{lem:tracefreenorm}
	The following identity holds:
	\begin{equation}\label{eq:tracefreenorm}
	|A^{\circ}|^2 = \sum_{i=1}^{n} \sum_{j=1}^n (\alpha_i, \alpha_j)^2 - \frac{1}{d} \cdot \left( \sum_{i=1}^{n} |\alpha_i|^2 \right)^2 
	\end{equation}
\end{lemma}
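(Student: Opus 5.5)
We prove the identity by a direct computation with the trace inner product on \(\Sym(\R^d)\). The first step uses the orthogonality of the decomposition \(A = \lambda \Id + A^{\circ}\) from~\eqref{eq:stfdec}: by Pythagoras,
\[
|A|^2 = |\lambda \Id|^2 + |A^{\circ}|^2 = \lambda^2 d + |A^{\circ}|^2 .
\]
Orthogonality holds because \((\Id, A^{\circ}) = \tr(A^{\circ}) = 0\). Hence \(|A^{\circ}|^2 = |A|^2 - \tr(A)^2/d\), since \(\lambda = \tr(A)/d\). It then remains to compute \(|A|^2\) and \(\tr(A)\) in terms of the vectors \(\alpha_i\).

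For the trace, Lemma~\ref{lem:trxox} and linearity give
\[
\tr(A) = \sum_i \tr(\alpha_i \otimes \alpha_i) = \sum_i |\alpha_i|^2 ,
\]
which produces the second term of~\eqref{eq:tracefreenorm}. For the norm, expand bilinearly:
\[
|A|^2 = \tr(A^2) = \sum_{i,j} \tr\bigl(\alpha_i \alpha_i^{\top} \alpha_j \alpha_j^{\top}\bigr).
\]
Using \(\alpha_i^{\top}\alpha_j = (\alpha_i,\alpha_j)\) and cyclicity of the trace, each summand equals
\[
(\alpha_i,\alpha_j)\, \tr\bigl(\alpha_i \alpha_j^{\top}\bigr) = (\alpha_i,\alpha_j)^2 .
\]
This gives the double sum in~\eqref{eq:tracefreenorm}, and substituting both expressions into the Pythagorean identity finishes the proof.

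There is no real obstacle here. The only points needing care are checking that the scalar/trace-free decomposition is genuinely orthogonal, and the small trace manipulation \(\tr(x x^{\top} y y^{\top}) = (x,y)^2\).
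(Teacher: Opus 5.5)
Your proposal is correct and follows essentially the same route as the paper: the Pythagorean identity for the orthogonal decomposition $A = \lambda \Id + A^{\circ}$, the trace formula $\tr(A) = \sum_i |\alpha_i|^2$ from Lemma~\ref{lem:trxox}, and the expansion $\tr(A^2) = \sum_{i,j} (\alpha_i,\alpha_j)^2$. You also explicitly justify the orthogonality via $(\Id, A^{\circ}) = \tr(A^{\circ}) = 0$, which the paper only asserts.
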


\begin{proof}
	Since the decomposition~\eqref{eq:stfdec} is orthogonal,
	\begin{equation}\label{eq:tfpf1}
		|A|^2 =  |\lambda \Id|^2 + |A^{\circ}|^2 \,.
	\end{equation}
	Clearly, \(|\Id|^2 = d\) and \(|\lambda \Id|^2 = \tr(A)^2/d\). 
	By Lemma~\ref{lem:trxox}, \(\tr(A) = \sum_i |\alpha_i|^2\), so
	\begin{equation}\label{eq:tfpf2}
		|\lambda \Id|^2 = \frac{1}{d} \cdot \left( \sum_{i=1}^{n} |\alpha_i|^2 \right)^2 \,.	
	\end{equation}
	On the other hand, 
	\[
	A^2 = \sum_{i=1}^{n} \sum_{j=1}^n \alpha_i \alpha_i^{\top}\alpha_j \alpha_j^{\top} \,.
	\] 
	It is easy to see that \(\tr\big(\alpha_i \alpha_i^{\top}\alpha_j \alpha_j^{\top} \big) = (\alpha_i, \alpha_j)^2\). Thus
	\begin{equation}\label{eq:tfpf3}
	|A|^2 =\tr(A^2)= \sum_{i=1}^{n} \sum_{j=1}^n (\alpha_i, \alpha_j)^2 \,.
	\end{equation}
	Equation~\eqref{eq:tracefreenorm} follows from Equations~\eqref{eq:tfpf1},~\eqref{eq:tfpf2}, and~\eqref{eq:tfpf3}.
\end{proof}

The following well-known variational characterization of tight frames plays a crucial role in our arguments.

\begin{proposition}[{\cite[Theorem 6.1]{waldron}}] \label{prop:tfineq}
	Let \((\alpha_i)_{i=1}^n\) be a finite sequence of vectors in \(\R^d\).
	Then the following inequality holds:
	\begin{equation}\label{eq:tfineq}
	\sum_{i=1}^{n} \sum_{j=1}^n (\alpha_i, \alpha_j)^2 \geq \frac{1}{d} \cdot \left( \sum_{i=1}^{n} |\alpha_i|^2 \right)^2 \,.
	\end{equation}
	Moreover, if at least one of the vectors \(\alpha_i\) is non-zero, then the equality holds if and only if \((\alpha_i)_{i=1}^n\) is a tight frame for \(\R^d\).
\end{proposition}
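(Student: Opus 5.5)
The inequality follows directly from Lemma~\ref{lem:tracefreenorm}. Set \(A=\sum_i \alpha_i\alpha_i^\top\) and \(\lambda=\tr(A)/d\). The lemma identifies the difference between the two sides of~\eqref{eq:tfineq} with \(|A^\circ|^2=\tr\big((A^\circ)^2\big)\), where \(A^\circ=A-\lambda\Id\). Since \(A^\circ\) is a real symmetric matrix, \(\tr\big((A^\circ)^2\big)\) is the sum of the squares of its entries, so it is nonnegative. This proves~\eqref{eq:tfineq}.

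For the equality case, assume at least one \(\alpha_i\neq 0\).

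\emph{Equality implies tight frame.} Equality in~\eqref{eq:tfineq} means \(|A^\circ|^2=0\). Because the inner product \((X,Y)=\tr(XY)\) is positive definite, this forces \(A^\circ=0\), that is, \(A=\lambda\Id\). By Lemma~\ref{lem:trxox}, \(\lambda=\frac1d\sum_i|\alpha_i|^2\), and this is strictly positive since some \(\alpha_i\neq0\). Hence \(A=\lambda\Id\) with \(\lambda>0\), which is exactly~\eqref{eq:tightframe}.

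\emph{Tight frame implies equality.} Suppose \(A=\mu\Id\) for some \(\mu>0\). Taking traces gives \(\mu=\tr(A)/d=\lambda\). Therefore \(A^\circ=0\), and Lemma~\ref{lem:tracefreenorm} gives equality in~\eqref{eq:tfineq}.

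The only point needing care is the role of the hypothesis that some \(\alpha_i\) is nonzero. If all \(\alpha_i\) vanish, then equality holds trivially but \(\lambda=0\), and Definition~\ref{def:tightframe} requires \(\lambda>0\). The hypothesis is used precisely to guarantee \(\lambda>0\); the rest of the argument is routine.
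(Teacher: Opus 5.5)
Your proposal is correct and follows essentially the same argument as the paper: it reads the inequality off Lemma~\ref{lem:tracefreenorm} as \(|A^{\circ}|^2\geq 0\), and characterizes equality by \(A=\lambda\Id\) with \(\lambda>0\) guaranteed by some \(\alpha_i\neq 0\). The only difference is that you spell out the converse direction (that \(A=\mu\Id\) forces \(\mu=\lambda\)), which the paper leaves implicit.
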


\begin{proof}
	The inequality follows immediately from Equation~\eqref{eq:tracefreenorm}.
	Moreover, equality holds if and only if \(A = \lambda \, \Id\). The assumption that at least one vector \(\alpha_i\) is non-zero implies that \(\lambda>0\). Thus the equality holds if and only if the frame operator is a positive multiple of the identity, i.e., \((\alpha_i)_{i=1}^n\) is a tight frame for \(\R^d\).
\end{proof}

\begin{remark}
	If \(|\alpha_i|=1\) for all \(i\) the inequality~\eqref{eq:tfineq} is a particular case of a family of inequalities known as the \emph{Welch bounds}.
\end{remark}
\vspace{-2mm}

\subsection{Euclidean \(\vee\)-systems as tight frames}

\begin{lemma}\label{lem:vstf} 
	Let \(\mA = \{\alpha_i\}_{i=1}^n\) be a vector configuration in \(\R^d\). Then \(\mA\) is a Euclidean \(\vee\)-system if and only if the following conditions are satisfied.
	\begin{enumerate}[label=\textup{(V\arabic*)'}]
		\item \label{it:V1'} \(\mA\) is a normalized tight frame for \(\R^d\).
		
		\item \label{it:V2'} For every linear \(2\)-plane \(\Pi \subset \R^d\) with \(|\mA \cap \Pi| \geq 3\),  \(\mA \cap \Pi\) is a tight frame for \(\Pi\).
		
		\item \label{it:V3'} For every linear \(2\)-plane \(\Pi \subset \R^d\) with \(\mA \cap \Pi = \{\alpha_i, \alpha_j\}\) for some \(i \neq j\),  \((\alpha_i, \alpha_j) = 0\).
	\end{enumerate}
\end{lemma}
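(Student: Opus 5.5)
The plan is to compare the two lists condition by condition, using Lemma~\ref{lem:tfequiv} as the bridge between the quadratic-form formulation of Definition~\ref{def:Vsystem} and the frame-operator formulation of Definition~\ref{def:tightframe}. Throughout, the vector configuration \(\mA\) is simple and spanning, so its elements are non-zero and pairwise non-proportional. This matches the standing assumptions of Definition~\ref{def:Vsystem}.

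\textbf{\ref{it:V1} and \ref{it:V1'}.} Condition \ref{it:V1} says \(\sum_i (x,\alpha_i)^2 = |x|^2\) for all \(x\). By Lemma~\ref{lem:tfequiv} with \(\lambda=1\), this is equivalent to \(\sum_i \alpha_i\otimes\alpha_i = \Id\). That is exactly \ref{it:V1'}.

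\textbf{\ref{it:V2} and \ref{it:V2'}.} Fix a \(2\)-plane \(\Pi\) with \(|\mA\cap\Pi|\ge 3\). Identify \(\Pi\) isometrically with \(\R^2\), so that for \(x\in\Pi\) the products \((x,\alpha_i)\) are computed inside \(\Pi\).
\begin{enumerate}[label=\textup{(\alph*)}]
\item Suppose \ref{it:V2'} holds. Lemma~\ref{lem:tfequiv}, applied in \(\Pi\), gives \(\sum_{\alpha_i\in\Pi}(x,\alpha_i)^2=\lambda|x|^2\) with \(\lambda>0\). This is \ref{it:V2} with \(\nu(\Pi)=\lambda\).
\item Suppose \ref{it:V2} holds with some real \(\nu(\Pi)\). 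We must check \(\nu(\Pi)>0\) before Lemma~\ref{lem:tfequiv} applies. Choose any \(\alpha_j\in\mA\cap\Pi\) and set \(x=\alpha_j\). The left side is then at least \(|\alpha_j|^4>0\), while the right side is \(\nu(\Pi)|\alpha_j|^2\). Hence \(\nu(\Pi)>0\), and Lemma~\ref{lem:tfequiv} shows that \(\mA\cap\Pi\) is a tight frame for \(\Pi\).
\end{enumerate}

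\textbf{\ref{it:V3} and \ref{it:V3'}.} These two conditions are literally identical.

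Every step is routine. The only point needing any care is the positivity of \(\nu(\Pi)\) in (b), which is required because Definition~\ref{def:tightframe} insists on \(\lambda>0\) while \ref{it:V2} allows an arbitrary real \(\nu(\Pi)\).
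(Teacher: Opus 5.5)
Your proposal is correct and follows essentially the same route as the paper: it matches \ref{it:V1}, \ref{it:V2} with \ref{it:V1'}, \ref{it:V2'} via Lemma~\ref{lem:tfequiv} and observes that \ref{it:V3} and \ref{it:V3'} are identical. Your check that \(\nu(\Pi)>0\), obtained by taking \(x=\alpha_j\), is a detail the paper leaves implicit, and it is handled correctly.
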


\begin{proof}
	It follows from Lemma~\ref{lem:tfequiv} that items~\ref{it:V1} and~\ref{it:V2} of Definition~\ref{def:Vsystem} are equivalent to~\ref{it:V1'} and~\ref{it:V2'} respectively. Items~\ref{it:V3} and~\ref{it:V3'} are simply the same.
\end{proof}
\vspace{-2mm}

\subsection{The associated weighted arrangement}

\begin{definition}\label{def:associatedwa}
	Let \(\mA = \{\alpha_i\}_{i=1}^n\) be a vector configuration in \(\R^d\).
	The \emph{associated weighted arrangement} is the pair \((\Delta_{\mA}, r_\mA)\) consisting of hyperplanes \(\Delta_{\mA} = \{\Delta_1, \ldots, \Delta_n\}\) and weights \(r_{\mA} = (r_1, \ldots, r_n)\) with  
	\begin{equation*}
	\Delta_i = \alpha_i^{\perp} \quad \textup{and} \quad
	r_i = |\alpha_i|^2 \,.
	\end{equation*}
	
	We also say that \(\Delta_{\mA}\) is the \emph{associated hyperplane arrangement} and that \(r_{\mA}\) is the \emph{associated weight vector}.
\end{definition}

\begin{lemma}\label{lem:associated-arrangement-essential}
Let \(\mA=\{\alpha_i\}_{i=1}^n\) be a Euclidean \(\vee\)-system in \(\mathbb R^d\). Then the associated hyperplane arrangement \(\Delta_{\mA}\) is essential, i.e. \(\bigcap_{i=1}^n \alpha_i^\perp=\{0\}\).
\end{lemma}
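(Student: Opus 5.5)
The plan is to derive essentiality directly from condition \ref{it:V1}, or equivalently \ref{it:V1'} of Lemma~\ref{lem:vstf}. A Euclidean \(\vee\)-system is a normalized tight frame for \(\R^d\). Lemma~\ref{lem:span} then shows that the vectors \(\alpha_1,\ldots,\alpha_n\) span \(\R^d\). Note also that spanning is part of the standing assumption on vector configurations. Essentiality is the dual statement: a vector orthogonal to all \(\alpha_i\) is orthogonal to their span, which is all of \(\R^d\), and so it is zero.

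Concretely, let \(x\in\bigcap_{i=1}^n \alpha_i^\perp\), so that \((x,\alpha_i)=0\) for all \(i\). I substitute this \(x\) into \ref{it:V1}:
\[
|x|^2=\sum_{i=1}^n (x,\alpha_i)^2 = 0 \,.
\]
Hence \(x=0\). This is the argument in the proof of Lemma~\ref{lem:span}, specialised to \(\lambda=1\). Finally, I note that \(\alpha_i^\perp\) is exactly the hyperplane \(\Delta_i\) of Definition~\ref{def:associatedwa}, so \(\bigcap_i\Delta_i=\{0\}\).

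I do not expect a genuine obstacle here. The one point that needs care is to use the tight frame identity \ref{it:V1} itself, not properties \ref{it:V2} or \ref{it:V3}. Irreducibility and the rank \(d\geq 2\) play no role, which is consistent with the lemma being stated for arbitrary Euclidean \(\vee\)-systems.
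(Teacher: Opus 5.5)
Your proof is correct and essentially matches the paper's. The paper uses Lemma~\ref{lem:vstf} to see that \(\mA\) is a normalized tight frame, then Lemma~\ref{lem:span} to get spanning; the proof of that lemma is exactly your substitution of \(x\) into \(\sum_i (x,\alpha_i)^2=|x|^2\).
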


\begin{proof}
By Lemma~\ref{lem:vstf}, \(\mA\) is a normalized tight frame for \(\mathbb R^d\). Hence the vectors \(\alpha_1,\ldots,\alpha_n\) span \(\mathbb R^d\) by Lemma~\ref{lem:span}. Therefore the only vector orthogonal to all \(\alpha_i\) is \(0\).
\end{proof}

\begin{definition}\label{def:linear-equivalence}
	Let \(\mH = \{H_i\}_{i=1}^n\) and \(\mH' = \{H'_i\}_{i=1}^n\) be two arrangements of linear hyperplanes in \(\R^d\). We say that \(\mH\) and \(\mH'\) are \emph{linearly equivalent} if there is an invertible linear transformation \(F \in \GL(d)\) such that \(H_i' = F(H_i)\) for all \(i\). Two weighted arrangements are linearly equivalent if their corresponding hyperplane arrangements are.
\end{definition}

\begin{lemma}\label{lem:lehqf}
	Let \(\mH = \{H_i\}_{i=1}^n\) and \(\mH' = \{H'_i\}_{i=1}^n\) be two arrangements of linear hyperplanes in \(\R^d\). Let \(Q\) and \(Q'\) be their respective Hirzebruch quadratic forms. Suppose that \(\mH\) and \(\mH'\) are linearly equivalent. Then \(Q'=Q\).
\end{lemma}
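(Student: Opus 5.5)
The plan is to observe that \(Q\) depends only on combinatorial data of the arrangement, namely the set \(\mG\) of codimension-\(2\) subspaces contained in at least three hyperplanes, recorded by which indices \(i\) satisfy \(S\subset H_i\), together with \(d\) and the numbers \(\sigma_i\). An invertible linear map preserves all of this data, so the two forms coincide.

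First, fix \(F\in\GL(d)\) with \(H_i'=F(H_i)\) for all \(i\). Since \(F\) is a linear bijection, it preserves dimensions of subspaces and inclusions, and it commutes with intersections: \(F(\bigcap_{i\in I}H_i)=\bigcap_{i\in I}F(H_i)\). Hence \(S\mapsto F(S)\) is a bijection from codimension-\(2\) subspaces of \(\R^d\) to themselves. Moreover, it satisfies \(S\subset H_i\) if and only if \(F(S)\subset H_i'\). In particular, \(S\) is the common intersection of three or more hyperplanes of \(\mH\) exactly when \(F(S)\) is the common intersection of the corresponding hyperplanes of \(\mH'\). So \(F\) induces a bijection \(\mG\to\mG'\) that preserves the index sets \(\{i\mid S\subset H_i\}\).

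Next, read off the consequences in Definition~\ref{def:hirqf}. The linear forms satisfy \(w_{F(S)}'=w_S\) as functions on \(\R^n\), because they are sums over the same index set. Hence \(\sum_{S'\in\mG'}w_{S'}^2=\sum_{S\in\mG}w_S^2\). The counts agree as well, \(\sigma_i'=\sigma_i\), since the \(S\in\mG\) with \(S\subset H_i\) correspond bijectively to the \(S'\in\mG'\) with \(S'\subset H_i'\). The last term depends only on \(d\) and \(\sum_i w_i\). Substituting into \eqref{eq:hir} therefore gives \(Q'=Q\) identically.

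There is no real obstacle here. The only point needing care is checking that "common intersection of \(3\) or more hyperplanes" is preserved. This means verifying that the multiplicity \(\#\{i\mid S\subset H_i\}\) is the same for \(S\) and \(F(S)\), which is exactly the inclusion-preservation established in the first step.
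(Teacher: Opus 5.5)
Your proposal is correct and follows the same approach as the paper, which simply records the statement as immediate from Definition~\ref{def:hirqf}. You spell out that \(F\) induces a bijection \(\mG\to\mG'\) preserving the index sets \(\{i\mid S\subset H_i\}\), hence the forms \(w_S\), the counts \(\sigma_i\), and therefore \(Q\).
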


\begin{proof}
	Immediate from Definition~\ref{def:hirqf}.
\end{proof}

\begin{lemma}\label{lem:linequivha}
	Let \(\mA = \{\alpha_i\}_{i=1}^n\) and \(\mB = \{\beta_i\}_{i=1}^n\) be vector configurations in \(\R^d\) with associated hyperplane arrangements \(\Delta_{\mA}\) and \(\Delta_{\mB}\) respectively. Then \(\Delta_{\mA}\) is linearly equivalent to \(\Delta_{\mB}\) if and only if \(\mA\) is projectively equivalent to \(\mB\).
\end{lemma}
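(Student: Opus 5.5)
The idea is to translate between lines and hyperplanes through orthogonal complements. Set \(L_i = \R\cdot\alpha_i\) and \(M_i = \R\cdot \beta_i\). Then \(\Delta_i = \alpha_i^{\perp} = L_i^{\perp}\) and \(\Delta'_i = \beta_i^\perp = M_i^\perp\). Both implications will be proved by comparing how a linear map \(F\) acts on the lines \(L_i\) and how its inverse transpose \(F^{-\top}\) acts on their orthogonal complements.

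The identity doing the work is the following. For \(F \in \GL(d)\) and any \(x \in \R^d\),
\[
F^{-\top}(x^{\perp}) = (F x)^{\perp}.
\]
To verify it, take \(y \perp x\). Then \((F^{-\top}y, Fx) = (y, F^{-1}Fx) = (y,x) = 0\), so \(F^{-\top}(x^\perp) \subseteq (Fx)^\perp\). Both sides are hyperplanes when \(x \neq 0\), so the inclusion is an equality.

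Suppose first that \(\mA\) is projectively equivalent to \(\mB\). Then there is \(F\in\GL(d)\) with \(M_i = F(L_i)\) for all \(i\), so \(\beta_i = \lambda_i F(\alpha_i)\) with \(\lambda_i \neq 0\). By the identity,
\[
\beta_i^\perp = (F\alpha_i)^\perp = F^{-\top}(\alpha_i^\perp).
\]
Hence \(G = F^{-\top}\) sends \(\Delta_i\) to \(\Delta'_i\) for every \(i\), so the two arrangements are linearly equivalent in the sense of Definition~\ref{def:linear-equivalence}.

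Conversely, suppose \(G\in\GL(d)\) satisfies \(G(\alpha_i^\perp) = \beta_i^\perp\) for all \(i\). Put \(F = G^{-\top}\), so that \(F^{-\top} = G\). The identity then gives
\[
(F\alpha_i)^\perp = G(\alpha_i^\perp) = \beta_i^\perp.
\]
Two nonzero vectors with the same orthogonal hyperplane are proportional, because \((x^\perp)^\perp = \R x\). So \(F(L_i) = M_i\), which is projective equivalence. Nonvanishing of the \(\alpha_i\) and \(\beta_i\) is part of our standing assumption that configurations are simple, and it is needed both for \(\alpha_i^\perp\) to be a hyperplane and for the proportionality step.

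There is no real obstacle here. The only points needing care are the identity \(F^{-\top}(x^\perp)=(Fx)^\perp\), whose verification is the inner-product computation above, and keeping the indexing consistent: the same \(i\)-th line corresponds to the same \(i\)-th hyperplane in both directions.
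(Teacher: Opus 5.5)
Your proof is correct and uses the same duality idea as the paper. The paper just notes that a configuration determines the dual arrangement in \((\R^d)^*\), which is isomorphic to \(\Delta_{\mA}\), and that projective equivalence of configurations corresponds to linear equivalence of these dual arrangements. Your identity \(F^{-\top}(x^{\perp})=(Fx)^{\perp}\) makes that correspondence explicit under the identification \(\R^d\cong(\R^d)^*\) given by the inner product, so you have written out the details the paper leaves as ``clearly''.
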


\begin{proof}  A vector configuration \(\mA\in \mathbb R^d\)  defines the dual hyperplane arrangement in \({\mathbb R^d}^*\), which is isomorphic to  \(\Delta_{\mA}\).  Clearly \(\mA\) and \(\mB\) are projectively isomorphic if and only if their dual arrangements are linearly isomorphic.
\end{proof}
\vspace{-1mm}

\subsection{Variational characterization of Euclidean \(\vee\)-systems}

\subsectionstart

\noindent Let \(\mA = \{\alpha_i\}_{i=1}^n\) be a vector configuration in \(\R^d\) and let \(Q: \R^n \to \R\) be the Hirzebruch quadratic form of the associated hyperplane arrangement \(\Delta_{\mA}\).

\begin{notation} Denote by  \(R\) be the subset of ordered pairs of indices \((i,j) \in [n] \times [n]\) such that 
	\[
	| \Span \, \{\alpha_i, \alpha_j\} \cap \mA | = 2. 
	\]
	Denote by \(\mF\)  the set of linear \(2\)-planes \(\Pi \subset \R^d\) with \(|\mA \cap \Pi| \geq 3\).
\end{notation} 

\begin{lemma}
	Suppose that \(\mA\) is a tight frame for \(\R^d\).
	Then the following identity holds:
	\begin{equation}\label{eq:hirzebruchnorm}
	-2\,Q(|\alpha_1|^2, \ldots, |\alpha_n|^2) = \sum_{(i,j) \in R} (\alpha_i, \alpha_j)^2 + \sum_{\Pi \in \mF} |A_{\Pi} - \nu(\Pi) \Id_{\Pi}|^2 \,,
	\end{equation}
	where \(A_{\Pi}\) for \(\Pi \in \mF\) is the frame operator of \(\mA \cap \Pi\), i.e., the linear endomorphism of \(\Pi\) given by
	\begin{equation*}
		A_{\Pi} = \sum_{i \,|\, \alpha_i \in \Pi} \alpha_i \otimes \alpha_i \,;
	\end{equation*} 
	and 
	\[
	\nu(\Pi) = \frac{\tr(A_{\Pi})}{2} = \frac{1}{2} \cdot \sum_{i \mid \alpha_i \in \Pi} |\alpha_i|^2 \,.
	\]
\end{lemma}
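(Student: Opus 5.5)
The plan is to evaluate the frame-potential identity for \(\mA\) and regroup the double sum \(\sum_{i,j}(\alpha_i,\alpha_j)^2\) according to the \(2\)-planes spanned by pairs of vectors. I then match the pieces with the terms of the Hirzebruch quadratic form~\eqref{eq:hir}. Throughout, write \(w_i = |\alpha_i|^2\).

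\textbf{Step 1 (tight frame).} Since \(\mA\) is a tight frame (with any \(\lambda>0\)), Proposition~\ref{prop:tfineq} holds with equality:
\[
\frac1d\Big(\sum_i w_i\Big)^2=\sum_{i,j}(\alpha_i,\alpha_j)^2 .
\]

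\textbf{Step 2 (splitting the double sum).} The diagonal terms \(i=j\) contribute \(\sum_i w_i^2\). For \(i\neq j\), the vectors are non-proportional because \(\mA\) is simple, so \(\Span\{\alpha_i,\alpha_j\}\) is a \(2\)-plane. Exactly one of two cases occurs: either \((i,j)\in R\), or this plane is a unique \(\Pi\in\mF\) containing both vectors. Hence
\[
\sum_{i,j}(\alpha_i,\alpha_j)^2=\sum_i w_i^2+\sum_{(i,j)\in R}(\alpha_i,\alpha_j)^2+\sum_{\Pi\in\mF}\ \sum_{\substack{i\neq j\\ \alpha_i,\alpha_j\in\Pi}}(\alpha_i,\alpha_j)^2 .
\]

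\textbf{Step 3 (the planes in \(\mF\)).} Fix \(\Pi\in\mF\) and apply Lemma~\ref{lem:tracefreenorm} inside \(\Pi\) (dimension \(2\)) to the sequence \(\mA\cap\Pi\). Since \(\nu(\Pi)=\tr(A_\Pi)/2\), the trace-free part of \(A_\Pi\) is \(A_\Pi-\nu(\Pi)\Id_\Pi\), so
\[
|A_\Pi-\nu(\Pi)\Id_\Pi|^2=\sum_{\substack{i\neq j\\ \alpha_i,\alpha_j\in\Pi}}(\alpha_i,\alpha_j)^2+\sum_{\alpha_i\in\Pi}w_i^2-\frac12\Big(\sum_{\alpha_i\in\Pi}w_i\Big)^2 .
\]

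\textbf{Step 4 (dictionary with the arrangement).} The map \(\Pi\mapsto S=\Pi^\perp\) is a bijection from \(\mF\) onto \(\mG\) for \(\Delta_\mA\), and \(S\subset\Delta_i\) if and only if \(\alpha_i\in\Pi\). Consequently \(2w_S=\sum_{\alpha_i\in\Pi}w_i\), so the last term above equals \(2w_S^2\). Moreover, \(\sigma_i\) is the number of \(\Pi\in\mF\) containing \(\alpha_i\), which gives
\[
\sum_{\Pi\in\mF}\sum_{\alpha_i\in\Pi}w_i^2=\sum_i\sigma_iw_i^2 .
\]

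\textbf{Step 5 (assembly).} Substituting Steps 2–4 into Step 1 yields
\[
\frac1d\Big(\sum w_i\Big)^2=-\sum_i(\sigma_i-1)w_i^2+2\sum_{S\in\mG}w_S^2+\sum_{R}(\alpha_i,\alpha_j)^2+\sum_{\Pi\in\mF}|A_\Pi-\nu(\Pi)\Id_\Pi|^2 .
\]
Moving the first two terms on the right to the left-hand side produces exactly \(-2Q(w)\), as read off from~\eqref{eq:hir}, and this gives~\eqref{eq:hirzebruchnorm}.

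The only delicate point is the bookkeeping in Step 4. One must check that every \(S\in\mG\) arises from some \(\Pi\in\mF\) and conversely, which holds because hyperplanes through \(S\) correspond to vectors in \(S^\perp\). One must also check that ordered pairs are counted consistently in \(R\) and in the \(\Pi\)-sums, which is automatic because both sides of each identity are sums over ordered pairs.
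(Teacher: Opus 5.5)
Your proof is correct and follows the paper's argument: use equality in the frame-potential inequality, split the ordered-pair sum into \(R\) and the planes \(\Pi\in\mF\), apply Lemma~\ref{lem:tracefreenorm} on each \(\Pi\), and match with~\eqref{eq:hir} via \(w_S=\nu(\Pi)\). The differences are only bookkeeping: you peel off the diagonal terms first, whereas the paper keeps them inside each \(\Pi\)-sum and corrects by \(-\sum_i(\sigma_i-1)|\alpha_i|^4\), and you make the bijection \(\Pi\mapsto\Pi^\perp\) from \(\mF\) to \(\mG\) explicit, which the paper leaves to direct comparison.
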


\begin{proof}
	We follow the proof of \cite[Theorem 3.1]{dbp-hirzebruch}.
	Since  \(\mA\) is a tight frame, by Proposition~\ref{prop:tfineq} we have
	\begin{equation}\label{eq:pfineq1}
	\frac{1}{d} \cdot \left( \sum_{i=1}^{n} |\alpha_i|^2 \right)^2  =
	\sum_{i=1}^{n} \sum_{j=1}^{n} (\alpha_i, \alpha_j)^2 \,.    
	\end{equation}
	The standing assumption that the vectors in \(\mA\) are pairwise non-proportional implies that if \(i \neq j\) then \((i, j) \in R\) or there exists a unique \(\Pi \in \mF\) such that \(\alpha_i, \alpha_j \in \Pi\). Therefore,
	\begin{equation}\label{eq:pfineq2}
	\sum_{i=1}^{n} \sum_{j=1}^{n}
	(\alpha_i, \alpha_j)^2 = \sum_{(i,j) \in R} (\alpha_i, \alpha_j)^2 \,+ \, \sum_{\Pi \in \mF} \sum_{\,\,\alpha_i, \alpha_j \in \Pi} (\alpha_i, \alpha_j)^2 \,-\, \sum_{i=1}^{n} (\sigma_i-1) \cdot |\alpha_i|^4 \,,    
	\end{equation}
	where \(\sigma_i\) is the number of \(2\)-planes in \(\mF\) that contain the vector \(\alpha_i\), equivalently the number \(\sigma_i\) from Definition~\ref{def:hirqf} for the associated arrangement \(\Delta_{\mA}\).
	Lemma~\ref{lem:tracefreenorm} applied to the vectors \(\mA \cap \Pi\) for a \(2\)-plane \(\Pi \in \mF\) gives us
	\begin{equation}\label{eq:pfineq3}
	\sum_{\alpha_i, \alpha_j \in \Pi} (\alpha_i, \alpha_j)^2 = |A_{\Pi} - \nu(\Pi) \Id_{\Pi}|^2 + 2 \, \nu(\Pi)^2 \,.
	\end{equation}
	It follows from~\eqref{eq:pfineq1},~\eqref{eq:pfineq2}, and~\eqref{eq:pfineq3}  that
	\[
	- 2 \cdot \sum_{\Pi \in \mF} \nu(\Pi)^2 \,+\, \sum_{i=1}^{n} (\sigma_i-1) \cdot |\alpha_i|^4 + \frac{1}{d} \cdot \left( \sum_{i=1}^{n} |\alpha_i|^2 \right)^2  = \sum_{(i,j) \in R} (\alpha_i, \alpha_j)^2 + \sum_{\Pi \in \mF} |A_{\Pi} - \nu(\Pi) \Id_{\Pi}|^2  \,.
	\]
	The left hand side of the above equality is equal to \(-2Q(|\alpha_1|^2, \ldots, |\alpha_n|^2)\) by direct comparison with Definition~\ref{def:hirqf}. This proves Equation~\eqref{eq:hirzebruchnorm}.
\end{proof}

\begin{proposition}\label{prop:hirineq}
	Suppose that \(\mA\) is normalized tight frame. Then 
	\begin{equation}\label{eq:hirineq}
	   Q(|\alpha_1|^2, \ldots, |\alpha_n|^2)  \leq 0 \,.
	\end{equation}
     Moreover, equality holds if and only if \(\mA\) is a Euclidean \(\vee\)-system.
\end{proposition}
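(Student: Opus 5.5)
The plan is to read everything off the identity \eqref{eq:hirzebruchnorm} established in the preceding lemma. A normalized tight frame is in particular a tight frame, so that lemma applies and gives
\[
-2\,Q(|\alpha_1|^2,\ldots,|\alpha_n|^2)=\sum_{(i,j)\in R}(\alpha_i,\alpha_j)^2+\sum_{\Pi\in\mF}|A_\Pi-\nu(\Pi)\Id_\Pi|^2 .
\]
The right-hand side is a sum of squares: the first terms are squares of real numbers, and the second are squared norms for the positive definite trace inner product on $\Sym(\Pi)$. Hence the right-hand side is $\ge 0$, which gives the inequality~\eqref{eq:hirineq} at once.

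For the equality case I will use Lemma~\ref{lem:vstf}, which recasts being a Euclidean $\vee$-system as conditions \ref{it:V1'}--\ref{it:V3'}. Condition \ref{it:V1'} holds by hypothesis. Equality $Q=0$ holds if and only if every summand vanishes, so there are two things to check.

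First, $(\alpha_i,\alpha_j)=0$ for all $(i,j)\in R$. By the definition of $R$, the pairs $(i,j)$ with $i\neq j$ in $R$ are exactly those where the plane $\Pi=\Span\{\alpha_i,\alpha_j\}$ satisfies $\mA\cap\Pi=\{\alpha_i,\alpha_j\}$. I need to check the diagonal: if $(i,i)$ belonged to $R$, then $|\alpha_i|^4=0$, which is impossible. But $\Span\{\alpha_i\}$ is a line, so $(i,i)\notin R$ in any case. Thus the vanishing of the first sum is precisely \ref{it:V3'}.

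Second, $A_\Pi=\nu(\Pi)\Id_\Pi$ for every $\Pi\in\mF$. Here $\nu(\Pi)>0$, since $\Pi$ contains nonzero vectors of $\mA$. So this says exactly that $\mA\cap\Pi$ is a tight frame for $\Pi$, which is \ref{it:V2'}.

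Conversely, if $\mA$ is a Euclidean $\vee$-system, then \ref{it:V3'} kills the $R$-terms. Condition \ref{it:V2'} gives $A_\Pi=\lambda\Id_\Pi$ for some $\lambda$, and taking traces forces $\lambda=\tr(A_\Pi)/2=\nu(\Pi)$, so the $\mF$-terms vanish as well. Therefore $Q=0$.

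There is no real obstacle here; the substantive work sits in the preceding lemma. The only point needing care is the last one: matching the constant in \ref{it:V2} with the specific $\nu(\Pi)$ of the lemma, which the trace computation settles.
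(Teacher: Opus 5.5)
Your proposal is correct and follows the paper's proof: the inequality is read off the sum-of-squares identity \eqref{eq:hirzebruchnorm}, and the equality case is matched term by term with conditions \ref{it:V2'} and \ref{it:V3'} of Lemma~\ref{lem:vstf}. Your extra checks are details the paper leaves implicit: that diagonal pairs do not lie in $R$, that $\nu(\Pi)>0$, and that taking traces forces the constant in \ref{it:V2'} to equal $\nu(\Pi)$.
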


\begin{proof}
	The inequality is a direct consequence of Equation~\eqref{eq:hirzebruchnorm}. Equality holds if and only if
	(i) \((\alpha_i, \alpha_j) = 0\) for all \((i,j) \in R\); and (ii) \(A_{\Pi} = \nu(\Pi) \Id_{\Pi}\) (or equivalently \(\mA \cap \Pi\) is a tight frame for \(\Pi\)) for every \(2\)-plane \(\Pi \in \mF\). The result then follows from the characterization of Euclidean \(\vee\)-systems as tight frames given by Lemma~\ref{lem:vstf}.  
\end{proof}

\section{From \(\vee\)-systems to PK arrangements, Theorem~\ref{thm:fromVtoPK}}\label{sec:vtop}

We prove in this section that an irreducible Euclidean \(\vee\)-system determines a real PK arrangement. The proof separates the three conditions in the Definition~\ref{def:PK} of a real PK arrangement. The trace formula gives the total weight condition~\ref{it:PK1}, the stability
inequality for tight frames gives the strict inequalities along intersections of
hyperplanes~\ref{it:PK2}, and the variational characterization from Proposition~\ref{prop:hirineq} gives the vanishing of the Hirzebruch quadratic form~\ref{it:PK3}.

\subsection{Proof of Theorem~\ref{thm:fromVtoPK}}
Let \(\mA = \{\alpha_i\}_{i=1}^n\) be an irreducible Euclidean \(\vee\)-system in \(\R^d\), with \(d \geq 2\).
Let \((\Delta_{\mA}, r_{\mA})\), where \(\Delta_{\mA} = \{\Delta_1, \ldots, \Delta_n\}\) and \(r_{\mA} = (r_1, \ldots, r_n)\), denote the associated weighted arrangement as in Definition~\ref{def:associatedwa}. We prove that \((\Delta_{\mA}, r_{\mA})\) satisfies items~\ref{it:PK1},~\ref{it:PK2}, and~\ref{it:PK3} of Definition~\ref{def:PK}, and hence defines a real PK arrangement.

Since \(\mA\) is a Euclidean \(\vee\)-system, it follows in particular from Lemma~\ref{lem:vstf} that \(\mA\) is a normalized tight frame.
The next Lemmas~\ref{lem:PK0},~\ref{lem:P1}, and~\ref{lem:P2} rely only on the assumptions that \(\mA\) is an irreducible and normalized tight frame for \(\R^d\), with \(d \geq 2\). 

\begin{lemma}\label{lem:PK0}
	The weights \(r_i = |\alpha_i|^2\) satisfy \(r_i \in (0,1)\) for all \(i\).
\end{lemma}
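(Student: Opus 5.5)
This is a direct application of Lemma~\ref{lem:rin01}. That lemma needs two hypotheses on \(\mA\): that it is an irreducible (simple, spanning) vector configuration in \(\R^d\) with \(d\geq 2\), and that it is a normalized tight frame for \(\R^d\). Its conclusion is that \(|\alpha_i|^2\in(0,1)\) for every \(i\), which is exactly the claim \(r_i\in(0,1)\).

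Both hypotheses hold under the standing assumptions of this section. Irreducibility and \(d\geq 2\) are assumed in Theorem~\ref{thm:fromVtoPK}. The normalized tight frame property is item~\ref{it:V1'} of Lemma~\ref{lem:vstf}. We invoke Lemma~\ref{lem:rin01} and we are done.

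It is worth recording why the lemma gives the answer, since the same mechanism will drive~\ref{it:PK2}.

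The lower bound \(r_i>0\) is immediate, because vectors in a (simple) vector configuration are nonzero.

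For the upper bound, apply Proposition~\ref{prop:stabineq}(i) to the line \(S=\R\alpha_i\). This gives \(\sum_{j:\alpha_j\in S}|\alpha_j|^2\leq 1\). Since the \(\alpha_j\) are pairwise non-proportional, the only vector of \(\mA\) lying in \(S\) is \(\alpha_i\), so the bound reads \(|\alpha_i|^2\leq 1\).

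To make the inequality strict, suppose \(|\alpha_i|^2=1\). Proposition~\ref{prop:stabineq}(ii) then forces every \(\alpha_j\) to lie in \(S\) or in \(S^\perp\). That gives a decomposition \(\R^d=S\oplus S^\perp\) with \(\mA=(\mA\cap S)\cup(\mA\cap S^\perp)\). This decomposition is nontrivial because \(d\geq 2\) makes \(S\) a proper nonzero subspace, so it contradicts the irreducibility of \(\mA\).

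There is no real obstacle here. The only point needing care is the equality case, where irreducibility and \(d\geq2\) are exactly what make the decomposition \(S\oplus S^\perp\) nontrivial.
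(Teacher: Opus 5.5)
Your proposal is correct and matches the paper's proof, which simply invokes Lemma~\ref{lem:rin01} using that an irreducible Euclidean \(\vee\)-system is a normalized tight frame by Lemma~\ref{lem:vstf}. Your recap of why that lemma holds follows the same stability-inequality argument the paper gives for Lemma~\ref{lem:rin01}. It also makes explicit the positivity \(r_i>0\) (vectors are nonzero) and the fact that \(\alpha_i\) is the only vector of \(\mA\) on the line \(\R\alpha_i\), both of which the paper leaves implicit.
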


\begin{proof}
	This follows from Lemma~\ref{lem:rin01}.
\end{proof}

\begin{lemma}\label{lem:P1}
    \((\Delta_{\mA}, r_{\mA})\) satisfies~\ref{it:PK1}.
\end{lemma}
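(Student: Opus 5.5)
The plan is to read \ref{it:PK1} directly off the trace formula. Condition \ref{it:PK1} asks that \(\sum_{i=1}^n r_i = d\), where \(r_i = |\alpha_i|^2\) are the weights of the associated weighted arrangement from Definition~\ref{def:associatedwa}. The weights of \((\Delta_{\mA}, r_{\mA})\) are indexed by the same index set as the vectors of \(\mA\): each hyperplane \(\Delta_i = \alpha_i^\perp\) carries the weight \(r_i\). So the sum in \ref{it:PK1} is exactly \(\sum_{i=1}^n |\alpha_i|^2\).

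First, I invoke Lemma~\ref{lem:vstf}, item~\ref{it:V1'}, to get that \(\mA\) is a normalized tight frame for \(\R^d\), that is, \(\sum_i \alpha_i\otimes\alpha_i = \Id\). Second, I apply the trace formula, Lemma~\ref{lem:traceformula}, which gives \(\sum_{i=1}^n |\alpha_i|^2 = d\). Combining this with \(r_i = |\alpha_i|^2\) yields \(\sum_i r_i = d\), which is \ref{it:PK1}.

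The only point needing a word of care is that the hyperplanes \(\Delta_i\) are pairwise distinct, so that the arrangement has exactly \(n\) hyperplanes and no weights are merged. This holds because the \(\alpha_i\) are pairwise non-proportional by the standing simplicity assumption. I expect no real obstacle here; irreducibility and \(d \geq 2\) are not needed for this lemma, only for Lemma~\ref{lem:PK0}.
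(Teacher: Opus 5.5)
Your proposal is correct and follows the paper's proof: the paper also notes that \(\mA\) is a normalized tight frame by Lemma~\ref{lem:vstf}, and then obtains \(\sum_i r_i = \sum_i |\alpha_i|^2 = d\) from the trace formula, Lemma~\ref{lem:traceformula}. You are also right that irreducibility and \(d \geq 2\) are not needed for this step.
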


\begin{proof}
    This follows from Lemma~\ref{lem:traceformula}.
\end{proof}

\begin{lemma}\label{lem:P2}
    \((\Delta_{\mA}, r_{\mA})\) satisfies~\ref{it:PK2}.
\end{lemma}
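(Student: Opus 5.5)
The plan is to translate condition \ref{it:PK2} for \(\Delta_{\mA}\) into a statement about subspaces spanned by vectors of \(\mA\), and then apply the stability inequality, Proposition~\ref{prop:stabineq}, together with irreducibility.

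First, set up the duality. Let \(L \subset \R^d\) be a non-zero proper linear subspace obtained as an intersection of hyperplanes \(\Delta_i = \alpha_i^\perp\). Put \(S = L^\perp\). Then \(\Delta_i \supset L\) holds if and only if \(\alpha_i \in L^\perp = S\). Hence
\[
\sum_{i \,:\, L \subset \Delta_i} r_i = \sum_{i \,|\, \alpha_i \in S} |\alpha_i|^2 ,
\]
and \(r(L) = d - \dim L = \dim S\). Since \(L\) is non-zero and proper, \(S\) is also a non-zero proper subspace, so \(0 < \dim S < d\).

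Second, apply Proposition~\ref{prop:stabineq}(i) to the normalized tight frame \(\mA\). This gives \(\sum_{\alpha_i \in S} |\alpha_i|^2 \le \dim S\), which is the non-strict version of \ref{it:PK2}.

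Third, show the inequality is strict by excluding the equality case. By Proposition~\ref{prop:stabineq}(ii), equality forces every \(\alpha_i\) to lie in \(S\) or in \(S^\perp\). Then \(\R^d = S \oplus S^\perp\) is a direct sum decomposition with \(\mA = (\mA\cap S)\cup(\mA\cap S^\perp)\). Both summands are non-trivial because \(0<\dim S<d\), so this decomposition contradicts the irreducibility of \(\mA\) (Definition~\ref{def:irrvc}). Therefore the inequality is strict.

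No step is a real obstacle. The only care needed is in the duality bookkeeping: checking that \(L \subset \Delta_i\) is equivalent to \(\alpha_i \in L^\perp\), and that \(S\) is non-trivial in both directions, so that the reducing decomposition obtained in the equality case is genuine. This mirrors the argument of Lemma~\ref{lem:rin01}, which is the case \(\dim S = 1\).
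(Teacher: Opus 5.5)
Your proof is correct and follows the paper's argument essentially verbatim: pass to \(S = L^{\perp}\), apply Proposition~\ref{prop:stabineq}(i) for the non-strict bound, and use part (ii) together with irreducibility to rule out equality. Your explicit check that \(0<\dim S<d\), so that \(\R^d = S\oplus S^{\perp}\) is a genuinely non-trivial decomposition, is a small detail the paper leaves implicit.
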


\begin{proof}
    Let \(L \subset \R^d\) be a non-zero proper linear subspace obtained as intersection of hyperplanes \(\Delta_i \in \Delta_{\mA}\). We want to show that
    \begin{equation}\label{eq:stinpf}
        \sum_{i \,\mid\, L \subset \Delta_i} r_i < r(L) \,,
    \end{equation}
    where \(r(L)\) is the codimension of \(L\). 
    Let \(S \subset \R^d\) be the orthogonal complement \(S = L^{\perp}\). Then \(L \subset \Delta_i = \alpha_i^{\perp}\) if and only if \(\alpha_i \in S\). Therefore,
    \begin{equation}\label{eq:pfstin1}
      \sum_{i \,|\, L \subset \Delta_i} r_i = \sum_{i \,|\, \alpha_i \in S} |\alpha_i|^2  \,.
    \end{equation}
    By Proposition~\ref{prop:stabineq} (i), 
    \begin{equation}\label{eq:pfstin2}
        \sum_{i \,|\, \alpha_i \in S} |\alpha_i|^2 \leq \dim S = r(L) \,.
    \end{equation}
    
	By Proposition~\ref{prop:stabineq} (ii), if equality holds
    in~\eqref{eq:pfstin2}, then \(\mA = (\mA \cap S) \cup (\mA \cap S^{\perp})\) which contradicts the assumption that \(\mA\) is irreducible. Therefore, the inequality~\eqref{eq:pfstin2} must be strict. Equation~\eqref{eq:pfstin1} together with the strict inequality~\eqref{eq:pfstin2} imply~\eqref{eq:stinpf}, as wanted.
\end{proof}

\begin{lemma}\label{lem:P3}
    \((\Delta_{\mA}, r_{\mA})\) satisfies~\ref{it:PK3}.
\end{lemma}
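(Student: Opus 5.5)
The plan is to apply Proposition~\ref{prop:hirineq} directly to \(\mA\) and then match its quadratic form with the one appearing in~\ref{it:PK3}.

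First, since \(\mA\) is a Euclidean \(\vee\)-system, Lemma~\ref{lem:vstf} shows that \(\mA\) is a normalized tight frame for \(\R^d\). So the hypothesis of Proposition~\ref{prop:hirineq} holds. The equality case of that proposition then gives
\[
Q(|\alpha_1|^2, \ldots, |\alpha_n|^2) = 0,
\]
where \(Q\) is the Hirzebruch quadratic form of the associated arrangement \(\Delta_{\mA}\).

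Second, I will check that this is literally condition~\ref{it:PK3} for \((\Delta_{\mA}, r_{\mA})\). Since \(r_i = |\alpha_i|^2\) by Definition~\ref{def:associatedwa}, the displayed equation reads \(Q(r_1,\ldots,r_n)=0\). It remains to confirm that the combinatorial data in Definition~\ref{def:hirqf} for \(\Delta_{\mA}\) agree with the data used in the proof of~\eqref{eq:hirzebruchnorm}. This comes down to the correspondence \(S \leftrightarrow \Pi = S^{\perp}\) between two families:
\begin{itemize}[label={}]
\item codimension-\(2\) subspaces \(S\) that lie in at least three hyperplanes \(\Delta_i\);
\item \(2\)-planes \(\Pi\in\mF\).
\end{itemize}
The correspondence holds because \(S\subset \Delta_i = \alpha_i^\perp\) if and only if \(\alpha_i\in S^\perp\). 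Under it, \(w_S\) corresponds to \(\nu(\Pi)\), and \(\sigma_i\) is the same number in both settings; the paper already notes this in that proof.

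There is no real obstacle here. The only point needing care is the bookkeeping identification of \(\mG\) with \(\mF\), which is immediate from taking orthogonal complements. Irreducibility is not needed for this lemma; it was used only for~\ref{it:PK2} and for the range \(r_i\in(0,1)\).
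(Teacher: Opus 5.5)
Your proposal is correct and follows the paper's proof: you use Lemma~\ref{lem:vstf} to see that \(\mA\) is a normalized tight frame and then apply the equality case of Proposition~\ref{prop:hirineq} with \(r_i=|\alpha_i|^2\). The bookkeeping identification \(S\leftrightarrow S^{\perp}\) between \(\mG\) and \(\mF\) (so that \(w_S=\nu(\Pi)\) and the \(\sigma_i\) agree) is exactly what the paper uses in deriving~\eqref{eq:hirzebruchnorm}. You are also right that irreducibility plays no role in this lemma.
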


\begin{proof}
    Let \(Q\) be the Hirzebruch quadratic form of \(\Delta_{\mA}\). We want to show that \(Q(r_1, \ldots, r_n) = 0\), where \(r_i = | \alpha_i|^2\). This follows from
    Proposition~\ref{prop:hirineq}.
\end{proof}

\begin{proof}[Proof of Theorem~\ref{thm:fromVtoPK}]
Let \(\mA = \{\alpha_i\}_{i=1}^n\) be an irreducible Euclidean \(\vee\)-system in \(\R^d\) with \(d \geq 2\). Let \((\Delta_{\mA}, r_{\mA})\) be the associated weighted arrangement. By Lemma~\ref{lem:PK0}, the weights \(r_i\) are in the open interval \((0,1)\).
By Lemmas~\ref{lem:P1},~\ref{lem:P2}, and~\ref{lem:P3} the weighted arrangement \((\Delta_{\mA}, r_{\mA})\) satisfies~\ref{it:PK1},~\ref{it:PK2}, and~\ref{it:PK3}. Hence, \((\Delta_{\mA}, r_{\mA})\) is a real PK arrangement.     
\end{proof}

\section{From PK arrangements to \(\vee\)-systems, Theorem~\ref{thm:fromPKtoV}}\label{sec:ptov}

We now prove the converse direction. Starting from a real PK arrangement, we choose normal vectors to its hyperplanes. The PK conditions~\ref{it:PK1} and~\ref{it:PK2} imply that the resulting
vector configuration is irreducible and that the given weight vector lies in the relative interior of its base polytope. Barthe's theorem then produces a
normalized tight frame in the required projective class and with the prescribed squared lengths. The vanishing of the Hirzebruch quadratic form~\ref{it:PK3} then implies, via Proposition~\ref{prop:hirineq}, that this tight frame is a Euclidean \(\vee\)-system.

\subsection{Proof of Theorem~\ref{thm:fromPKtoV}}

Let \((\mH, w) = \{(H_i, w_i)\}_{i=1}^n\) be a real PK arrangement in \(\R^d\) consisting of linear hyperplanes \(H_i \subset \R^d\) together with weights \(w_i \in (0,1)\) satisfying~\ref{it:PK1},~\ref{it:PK2}, and~\ref{it:PK3}.
For each hyperplane \(H_i \in \mH\) choose a non-zero vector \(v_i\) orthogonal to \(H_i\), so that \(H_i = \{x \in \R^d \mid (x, v_i) = 0\}\).
The vector configuration \(\mV = \{v_i\}_{i=1}^n\) is simple (because the hyperplanes \(H_i\) are pairwise distinct)
and spanning, as guaranteed by the following.

\begin{lemma}
	The set of vectors \(\mV = \{v_1, \ldots, v_n\}\) spans \(\R^d\).
\end{lemma}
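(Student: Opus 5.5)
We argue by contradiction, using condition~\ref{it:PK2} together with~\ref{it:PK1}. Suppose the vectors \(v_1,\ldots,v_n\) span a proper subspace \(S \subsetneq \R^d\). Then every hyperplane \(H_i = v_i^{\perp}\) contains the non-zero subspace \(L = S^{\perp}\). Since \(L = \bigcap_{i=1}^n H_i\), it is an intersection of hyperplanes in \(\mH\).

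We now split according to whether \(L\) is proper. If \(L \neq \R^d\), then \(L\) is a non-zero proper intersection subspace, and \ref{it:PK2} applies to it. Because every \(H_i\) contains \(L\), the sum in~\ref{it:PK2} runs over all \(i\), giving
\[
\sum_{i=1}^n w_i < r(L) = d - \dim L \leq d - 1 .
\]
This contradicts~\ref{it:PK1}, which says \(\sum_i w_i = d\).

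The remaining case is \(L = \R^d\). This would force \(S = \{0\}\), i.e.\ all \(v_i = 0\). That is impossible, since each \(v_i\) was chosen non-zero and \(n \geq 1\); the latter holds because \(\sum_i w_i = d \geq 2\) forces the arrangement to be non-empty. Hence \(S = \R^d\), so \(\mV\) spans \(\R^d\).

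There is no real obstacle in this argument. The only point needing care is to check that \(L\) really falls under the hypotheses of~\ref{it:PK2}, namely that it is a non-zero, proper subspace arising as an intersection of hyperplanes of \(\mH\). This is handled by the case split above.
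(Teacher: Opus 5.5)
Your proof is correct and follows the paper's argument: assume the span is proper, obtain a non-zero subspace \(L=\bigcap_i H_i\) contained in every hyperplane, and apply \ref{it:PK2} to get \(\sum_i w_i < r(L) < d\), contradicting \ref{it:PK1}. Your separate treatment of the case \(L=\R^d\) (excluded because the \(v_i\) are non-zero and \(n\geq 1\)) spells out a properness check that the paper leaves implicit.
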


\begin{proof}
	By contradiction, suppose that \(\Span \{v_1, \ldots, v_n\}\) is a proper subspace of \(\R^d\). Then there is a non-zero vector \(x \in \R^d\) such that \((x, v_i) = 0\) for all \(i\). Thus \(x \in L\), where \(L = \bigcap_{i=1}^n H_i\). In particular, \(\dim L >0\) , so \(r(L) = d - \dim L < d\). By~\ref{it:PK2},
	\[
	\sum_{i=1}^n w_i = \sum_{i \mid L \subset H_i} w_i < d \,,
	\]
	but this contradicts~\ref{it:PK1}, namely that \(\sum_{i=1}^{n}w_i = d\).
\end{proof}

Let \(P(\mV)\) be the base polytope of the vector configuration \(\mV\).

\begin{lemma}\label{lem:PK-basepolytope}
	The following holds: \textup{(i)}  \(\dim P(\mV) = n-1\); and \textup{(ii)} \(w \in \relin(P(\mV))\).
\end{lemma}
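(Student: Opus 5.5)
The plan is to combine the inequality description of the base polytope (Lemma~\ref{lem:basepolytope-ineq}) with the strict inequalities coming from~\ref{it:PK2}. The subspaces in that description dualize to flats of \(\mH\). Once (ii) is in hand, (i) follows from Lemma~\ref{lem:basepolytope-dim} by showing that \(\mV\) is irreducible.

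\textbf{Dualizing the subspaces.} Let \(S \subset \R^d\) be a subspace spanned by vectors of \(\mV\), and put \(L = S^{\perp}\). Then \(L = \bigcap_{i \mid v_i \in S} H_i\), so \(L\) is an intersection of hyperplanes of \(\mH\) with \(r(L) = \dim S\). Moreover \(v_i \in S\) if and only if \(L \subset H_i\), since \(S\) is spanned by the \(v_i\) it contains and \(S = L^{\perp}\). Hence
\[
\sum_{i \mid v_i \in S} w_i = \sum_{i \mid L\subset H_i} w_i .
\]

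\textbf{Membership and relative interior.} If \(S\) is a nonzero proper subspace, then \(L\) is nonzero and proper, and~\ref{it:PK2} gives \(\sum_{v_i\in S} w_i < \dim S\). The cases \(S=0\) (trivial) and \(S=\R^d\) (equality, by~\ref{it:PK1}) remain. Together with \(w_i>0\), Lemma~\ref{lem:basepolytope-ineq} then gives \(w \in P(\mV)\), and every defining inequality except the equation \(\sum_i w_i = d\) is strict at \(w\). There are finitely many such inequalities, so a neighbourhood of \(w\) inside the affine hyperplane \(\{\sum_i w_i = d\}\) is contained in \(P(\mV)\). Consequently \(\dim P(\mV) = n-1\), and \(w\) lies in the interior of \(P(\mV)\) relative to that hyperplane, which is then its affine hull. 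This proves both (i) and (ii) at once.

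\textbf{Cross-check via irreducibility.} Alternatively, suppose \(\mV\) were reducible, with \(\R^d = P\oplus Q\) nontrivial and every \(v_i\) in \(P\) or in \(Q\). Let \(P'\) and \(Q'\) be the spans of the \(v_i\) lying in \(P\) and in \(Q\) respectively. Since \(\mV\) spans \(\R^d\), we get \(P'=P\) and \(Q'=Q\), and both are proper and nonzero. Applying the strict inequalities to each and adding them gives \(d = \sum_i w_i < \dim P + \dim Q = d\), a contradiction. So \(\mV\) is irreducible, and Lemma~\ref{lem:basepolytope-dim} again yields (i).

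The main point requiring care is the translation between subspaces \(S\) spanned by vectors of \(\mV\) and flats \(L\) of \(\mH\). In particular, one must check that \(v_i\in S \iff L\subset H_i\), which uses that \(S\) is spanned by the \(v_i\) it contains and that \(S = L^{\perp}\). Once this dictionary is set up, the rest is routine.
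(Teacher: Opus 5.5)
Your proposal is correct and follows the paper's argument: the paper likewise uses the strict inequalities from \ref{it:PK2} (packaged as an open cone \(C\)) together with Lemma~\ref{lem:basepolytope-ineq} to obtain a nonempty open subset of \(\{\sum_i w_i = d\}\) that contains \(w\) and lies in \(P(\mV)\), which gives (i) and (ii) at once. Your explicit dictionary \(S \leftrightarrow L = S^{\perp}\) fills in a step the paper leaves implicit. Your irreducibility cross-check reverses the paper's Lemma~\ref{lem:Virr}: you prove irreducibility directly and then get (i) from Lemma~\ref{lem:basepolytope-dim}, and this is also valid.
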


\begin{proof}
	Let \(\mS\) denote the finite collection of non-zero proper vector subspaces \(S \subset \R^d\) spanned by vectors in \(\mV\).
	Let \(C\) be the open polyhedral cone in the positive octant \(\R_{>0}^n\) defined by the linear inequalities
	\[
	\forall S \in \mS : \quad
	\frac{1}{\dim S} \cdot \sum_{i \mid v_i \in S} w_i < \frac{1}{d} \cdot \sum_{i=1}^{n} w_i \,.
	\]
	Let \(\Sigma_d \subset \R^n\) be the affine hyperplane \(\Sigma_d = \{\sum_{i=1}^{n} w_i = d\}\). 
	Items~\ref{it:PK1} and~\ref{it:PK2} together imply that \(w \in C \cap \Sigma_d\). On the other hand,
	it follows from the description of the base polytope given by Lemma~\ref{lem:basepolytope-ineq} that \(C \cap \Sigma_d \subset P(\mV)\). Since \(C \cap \Sigma_d\) is a non-empty open subset of \(\Sigma_d\) contained in \(P(\mV)\), the polytope \(P(\mV)\) must have maximal possible dimension \(\dim P(\mV) = n-1\); which proves (i). Moreover, \(w \in C \cap \Sigma_d \subset \relin(P(\mV))\), which proves (ii).
\end{proof}

\begin{lemma}\label{lem:Virr}
	The vector configuration \(\mV\) is irreducible.
\end{lemma}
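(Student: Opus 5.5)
The quickest route is through the base polytope. Lemma~\ref{lem:PK-basepolytope}~(i) gives \(\dim P(\mV) = n-1\). Lemma~\ref{lem:basepolytope-dim} says this maximal dimension occurs exactly when \(\mV\) is irreducible. So irreducibility follows at once.

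Since Lemma~\ref{lem:basepolytope-dim} is quoted from the literature, I would also give a direct argument from~\ref{it:PK1} and~\ref{it:PK2}, as a check that the combinatorics is right.

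Suppose \(\mV\) is reducible. Then there is a non-trivial decomposition \(\R^d = P \oplus Q'\) with \(\mV = (\mV \cap P) \cup (\mV \cap Q')\). Let \(S_1 = \Span(\mV \cap P) \subset P\) and \(S_2 = \Span(\mV \cap Q') \subset Q'\). Because \(\mV\) spans \(\R^d\) (the preceding lemma), we get \(S_1 = P\) and \(S_2 = Q'\). Hence both are non-zero proper subspaces spanned by vectors of \(\mV\), and \(\dim S_1 + \dim S_2 = d\).

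Now set \(L_k = S_k^{\perp}\), computed in the ambient Euclidean structure. Then \(L_k = \bigcap_{i \mid v_i \in S_k} H_i\) is an intersection of hyperplanes of \(\mH\). It is non-zero and proper, with codimension \(r(L_k) = \dim S_k\). Also \(L_k \subset H_i\) exactly when \(v_i \in S_k\).

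Condition~\ref{it:PK2} gives \(\sum_{i \mid v_i \in S_k} w_i < \dim S_k\) for \(k=1,2\). Every \(v_i\) lies in exactly one of \(S_1, S_2\), since \(S_1 \cap S_2 = 0\) and the \(v_i\) are non-zero. Adding the two inequalities therefore gives \(\sum_i w_i < d\), which contradicts~\ref{it:PK1}.

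The only point needing care is that the \(L_k\) really are intersections of arrangement hyperplanes and that their codimension equals \(\dim S_k\). Both follow because \(S_k\) is spanned by the normals \(v_i\) it contains. I expect no serious obstacle.
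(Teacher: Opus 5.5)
Your first paragraph is exactly the paper's proof: Lemma~\ref{lem:PK-basepolytope}~(i) gives $\dim P(\mV)=n-1$, and Lemma~\ref{lem:basepolytope-dim} turns this into irreducibility. Your supplementary direct argument is also correct. The spanning lemma forces $S_1=P$ and $S_2=Q'$; each $L_k=S_k^{\perp}$ is a non-zero proper flat of $\mH$ of codimension $\dim S_k$; and adding the two instances of~\ref{it:PK2} contradicts~\ref{it:PK1}. It has the advantage of making the lemma independent of the cited matroid-theoretic fact, in the same spirit as the paper's own proof that $\mV$ spans $\R^d$.
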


\begin{proof}
	This follows from Lemma~\ref{lem:PK-basepolytope} (i) and Lemma~\ref{lem:basepolytope-dim}.
\end{proof}

\begin{lemma}\label{lem:barthe}
     There exist a linear transformation \(F \in \GL(d)\) and non-zero scalars \(\lambda_i \in \R\) such that the vector configuration \(\mA = \{\alpha_i\}_{i=1}^n\) with \(\alpha_i = \lambda_i \, F(v_i)\) satisfies:
     \begin{enumerate}[label=\textup{(\roman*)}]
     	\item \(\mA\) is a normalized tight frame for \(\R^d\);
     	\item \(|\alpha_i|^2 = w_i\) for all \(i\).
     \end{enumerate}
\end{lemma}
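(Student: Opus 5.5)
This is a direct application of Barthe's theorem (Theorem~\ref{thm:barthe}) to the vector configuration \(\mV = \{v_i\}_{i=1}^n\) with the weight vector \(w\). The plan is to check the hypotheses of Theorem~\ref{thm:barthe} one at a time, invoke the implication (2)\(\Rightarrow\)(1), and read off (i) and (ii).

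First, \(\mV\) qualifies as a vector configuration in the sense of Definition~\ref{def:vc}. It is simple because the hyperplanes \(H_i\) are pairwise distinct, so their normals are pairwise non-proportional. It is spanning by the preceding lemma.

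Second, the weight vector must satisfy the standing assumptions of Theorem~\ref{thm:barthe}. We have \(w_i > 0\) for all \(i\) because \(w_i \in (0,1)\) by Definition~\ref{def:PK}. We have \(\sum_i w_i = d\) by~\ref{it:PK1}.

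Third, condition (2) of Theorem~\ref{thm:barthe} asks that \(w \in \relin(P(\mV))\). This is exactly Lemma~\ref{lem:PK-basepolytope}~(ii). That lemma contains the only substantive input: \ref{it:PK2} gives the strict stability inequalities, and these place \(w\) in an open subset of \(\Sigma_d\) contained in \(P(\mV)\).

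Applying (2)\(\Rightarrow\)(1) then yields \(F \in \GL(d)\) and non-zero scalars \(\lambda_i\) such that \(\alpha_i = \lambda_i F(v_i)\) forms a normalized tight frame with \(|\alpha_i|^2 = w_i\). This is precisely (i) and (ii).

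Two small points remain to confirm. The resulting \(\mA\) is again simple and spanning: it is simple because \(F\) is invertible and the \(\lambda_i \neq 0\), and it is spanning by Lemma~\ref{lem:span}. Also, by construction \(\mA\) is projectively equivalent to \(\mV\), which is what later steps require.

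There is no real obstacle here, since all the analytic difficulty is contained in Barthe's theorem itself, which we take from \cite{aks}. The only care needed is matching the hypotheses exactly, in particular the strict positivity of the weights and the relative-interior condition, both of which are already established.
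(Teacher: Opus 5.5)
Your proposal is correct and matches the paper's proof. The paper also gets the lemma directly from Lemma~\ref{lem:PK-basepolytope}~(ii) combined with the implication (2)$\Rightarrow$(1) of Barthe's Theorem~\ref{thm:barthe}; your explicit checks of the remaining hypotheses (simplicity, spanning, $w_i>0$, and \ref{it:PK1}) just spell out what the paper leaves implicit.
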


\begin{proof}
	This follows from Lemma~\ref{lem:PK-basepolytope} (ii) together with Barthe's Theorem~\ref{thm:barthe}.
\end{proof}

Let \(\mA\) be as in Lemma~\ref{lem:barthe} and  \(\Delta_{\mA}\) the associated hyperplane arrangement.

\begin{lemma}\label{lem:equivH}
	The arrangement \(\Delta_{\mA}\) is linearly equivalent to \(\mH\).
\end{lemma}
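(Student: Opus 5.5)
The plan is to deduce the statement directly from Lemma~\ref{lem:linequivha}, which says that two vector configurations have linearly equivalent associated hyperplane arrangements exactly when the configurations are projectively equivalent. Two things need to be checked: that \(\mA\) is projectively equivalent to \(\mV\), and that \(\Delta_{\mV}\) coincides with \(\mH\).

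For the first point, Lemma~\ref{lem:barthe} gives \(F \in \GL(d)\) and non-zero scalars \(\lambda_i\) with \(\alpha_i = \lambda_i F(v_i)\) for every \(i\). This is exactly the definition of projective equivalence of \(\mA\) and \(\mV\), with the same indexing. The vectors \(\alpha_i\) are non-zero and pairwise non-proportional, because \(F\) is invertible and the \(\lambda_i\) are non-zero. So \(\mA\) is a simple, spanning configuration and Lemma~\ref{lem:linequivha} applies. It gives that \(\Delta_{\mA}\) is linearly equivalent to \(\Delta_{\mV}\), with the labelling \(\Delta_i \leftrightarrow v_i^{\perp}\) preserved.

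For the second point, note that by construction \(v_i\) is orthogonal to \(H_i\) and \(H_i = \{x \mid (x,v_i)=0\} = v_i^{\perp}\). Hence \(\Delta_{\mV} = \{v_i^{\perp}\}_{i=1}^n = \mH\), index by index. Since linear equivalence is transitive (indeed here it is just an identity), \(\Delta_{\mA}\) is linearly equivalent to \(\mH\).

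If one prefers not to rely on the duality argument inside Lemma~\ref{lem:linequivha}, there is a direct formula. Set \(G = (F^{\top})^{-1}\). For \(x \in H_i\) we have
\[
(Gx, \alpha_i) = \lambda_i (F^{-\top}x, F v_i) = \lambda_i (x, v_i) = 0 .
\]
Thus \(G(H_i) \subset \alpha_i^{\perp} = \Delta_i\), and equality holds by dimension count. So \(G \in \GL(d)\) realizes the equivalence explicitly.

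There is no real obstacle here. The only care needed is keeping the indexing consistent and using \(F^{-\top}\) rather than \(F\) when passing from vectors to hyperplanes.
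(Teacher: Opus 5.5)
Your proposal is correct and follows the paper's proof: projective equivalence of \(\mA\) and \(\mV\) comes from Lemma~\ref{lem:barthe}, \(\Delta_{\mV}=\mH\) holds by construction, and Lemma~\ref{lem:linequivha} then gives the linear equivalence. Your explicit map \(G=(F^{\top})^{-1}\) is a correct, self-contained check of the step that the paper leaves to the duality argument inside Lemma~\ref{lem:linequivha}.
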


\begin{proof}
	By construction, the associated hyperplane arrangement of the vector configuration \(\mV\) is \(\Delta_{\mV} = \mH\). Since the vector configurations \(\mV\) and \(\mA\) are projectively equivalent (by Lemma~\ref{lem:barthe}), their associated hyperplane arrangements \(\mH\) and \(\Delta_{\mA}\) are linearly equivalent as guaranteed by Lemma~\ref{lem:linequivha} .  
\end{proof}

\begin{lemma}\label{lem:Avs}
	 The vector configuration \(\mA\) is an irreducible Euclidean \(\vee\)-system.
\end{lemma}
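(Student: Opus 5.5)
The proof splits into two parts: showing that \(\mA\) is a Euclidean \(\vee\)-system, and showing that it is irreducible. Both follow quickly from earlier results.

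\emph{The \(\vee\)-system property.} By Lemma~\ref{lem:barthe}, \(\mA\) is a normalized tight frame for \(\R^d\) with \(|\alpha_i|^2 = w_i\). It is simple because the \(\alpha_i = \lambda_i F(v_i)\) are nonzero multiples of images under an invertible \(F\) of the pairwise non-proportional \(v_i\). It is spanning by Lemma~\ref{lem:span}. So Proposition~\ref{prop:hirineq} applies to \(\mA\), with \(Q_{\mA}\) the Hirzebruch quadratic form of \(\Delta_{\mA}\): we have \(Q_{\mA}(|\alpha_1|^2,\ldots,|\alpha_n|^2) \le 0\), with equality exactly when \(\mA\) is a Euclidean \(\vee\)-system.

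It remains to show this value is zero. By Lemma~\ref{lem:equivH}, \(\Delta_{\mA}\) is linearly equivalent to \(\mH\), with the same indexing \(\Delta_i = F'(H_i)\) for some \(F' \in \GL(d)\). Lemma~\ref{lem:lehqf} then gives \(Q_{\mA} = Q\), where \(Q\) is the Hirzebruch form of \(\mH\). Hence
\[
Q_{\mA}(|\alpha_1|^2,\ldots,|\alpha_n|^2) = Q(w_1,\ldots,w_n) = 0
\]
by~\ref{it:PK3}, and \(\mA\) is a Euclidean \(\vee\)-system.

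The only point needing care is the index matching. Linear equivalence must send \(H_i\) to \(\Delta_i\) for each \(i\), not merely match the arrangements as sets, so that the coordinates of \(Q\) and \(Q_{\mA}\) correspond. This holds because \(\alpha_i\) is a multiple of \(F(v_i)\), so \(\Delta_i = \alpha_i^\perp = (F^{\top})^{-1}(H_i)\). Since \(Q\) depends only on the incidence data (the sets \(\mG\) and the numbers \(\sigma_i\)), the two forms agree coordinatewise.

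\emph{Irreducibility.} Irreducibility is preserved under projective equivalence. If \(\R^d = P \oplus Q'\) decomposes \(\mA\), then \(F^{-1}(P) \oplus F^{-1}(Q')\) decomposes \(\mV\), because rescaling a vector keeps it in the same subspace. Since \(\mV\) is irreducible by Lemma~\ref{lem:Virr}, \(\mA\) is irreducible as well.

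Neither part presents a real obstacle; the argument only assembles Lemma~\ref{lem:barthe}, Lemma~\ref{lem:equivH}, Lemma~\ref{lem:lehqf} and Proposition~\ref{prop:hirineq}, with the index bookkeeping above the one step to check explicitly.
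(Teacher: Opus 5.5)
Your proposal is correct and follows the paper's proof essentially step by step: $Q_{\mA}=Q$ via Lemmas~\ref{lem:equivH} and~\ref{lem:lehqf}, vanishing at $w$ by~\ref{it:PK3}, Proposition~\ref{prop:hirineq} for the $\vee$-system property, and irreducibility inherited from $\mV$ through projective equivalence. Your extra checks (simplicity and spanning, the index-wise matching $\Delta_i=(F^{\top})^{-1}(H_i)$, and transporting a decomposition back via $F^{-1}$) only make explicit details that the paper leaves implicit.
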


\begin{proof}
	Let \(Q'\) be the Hirzebruch quadratic form of \(\Delta_{\mA}\). By Lemmas~\ref{lem:equivH} and~\ref{lem:lehqf}, \(Q' = Q\), where \(Q\) is the Hirzebruch quadratic form of \(\mH\). On the other hand, by condition~\ref{it:PK3} and Lemma~\ref{lem:barthe} (ii),
	\begin{equation}\label{eq:zerohir}
		Q(|\alpha_1|^2, \ldots, |\alpha_n|^2) = Q(w_1, \ldots, w_n) = 0 \,.
	\end{equation}
	
    By Lemma~\ref{lem:barthe} (i), \(\mA\) is a normalized tight frame for \(\R^d\). It follows from Proposition~\ref{prop:hirineq} together with Equation~\eqref{eq:zerohir} that \(\mA\) is a Euclidean \(\vee\)-system. 
    
    The vector configuration \(\mA\) is irreducible because \(\mA\) is projectively equivalent to \(\mV\) (by Lemma~\ref{lem:barthe}) and \(\mV\) is irreducible (by Lemma~\ref{lem:Virr}).
\end{proof}

\begin{proof}[Proof of Theorem~\ref{thm:fromPKtoV}]
	Let \((\mH, w) = \{(H_i, w_i)\}_{i=1}^n\) be a real PK arrangement in \(\R^d\) consisting of linear hyperplanes \(H_i \subset \R^d\) together with weights \(w_i \in (0,1)\) satisfying~\ref{it:PK1},~\ref{it:PK2}, and~\ref{it:PK3}. Let \(\mA\) be as in Lemma~\ref{lem:barthe}. By Lemma~\ref{lem:Avs}, \(\mA\) is an irreducible Euclidean \(\vee\)-system in \(\R^d\). The associated weighted arrangement \((\Delta_{\mA}, r_{\mA})\) satisfies: (i) \(\Delta_{\mA}\) is linearly equivalent to \(\mH\) by Lemma~\ref{lem:equivH}; and (ii) \(r_{\mA} = w\) by Lemma~\ref{lem:barthe} (ii). 
	
	Suppose now that \(\mB = \{\beta_i\}_{i=1}^n\) is a \(\vee\)-system in \(\R^d\) such that its associated weighted arrangement \((\Delta_{\mB}, r_{\mB})\) satisfies: (i) \(\Delta_{\mB}\) is linearly equivalent to \(\mH\); and (ii) \(r_{\mB} = r_{\mA} = w\). By Lemma~\ref{lem:linequivha}, (i) implies that \(\mB\) is projectively equivalent to \(\mA\). By item~\ref{it:V1'} of Lemma~\ref{lem:vstf}, \(\mB\) is a normalized tight frame for \(\R^d\). It then follows from uniqueness in Barthe's theorem (Proposition~\ref{prop:uniqueness}) that \(\mB\) is orthogonal projectively equivalent to \(\mA\).
\end{proof}

\section{Proofs of Corollary \ref{cor:bijection} and Corollary \ref{cor:classification}}\label{sec:corrolaries}

\begin{proof}[Proof of Corollary \ref{cor:bijection}]
By Theorem~\ref{thm:fromVtoPK}, the weighted arrangement associated with an
irreducible Euclidean \(\vee\)-system is a real PK arrangement. Moreover, by
Definitions~\ref{def:orthogonally-proj-equiv} and
\ref{def:linear-equivalence}, orthogonal projectively equivalent
\(\vee\)-systems determine linearly equivalent weighted arrangements.
Consequently, the assignment in the statement induces a well-defined map on
the corresponding equivalence classes.

This map is surjective by Theorem~\ref{thm:fromPKtoV}: given a real PK
arrangement \((\mH,w)\), that theorem produces an irreducible Euclidean
\(\vee\)-system \(\mA\) such that \(\Delta_{\mA}\) is linearly equivalent to
\(\mH\) and \(r_{\mA}=w\).

To prove injectivity, suppose that two irreducible Euclidean
\(\vee\)-systems \(\mA\) and \(\mA'\) determine the same equivalence class of
real PK arrangements. After relabelling, if necessary,
\(\Delta_{\mA'}\) is linearly equivalent to \(\Delta_{\mA}\) and
\(r_{\mA'}=r_{\mA}\). Thus both \(\mA\) and \(\mA'\) satisfy properties
\textup{(i)} and \textup{(ii)} of Theorem~\ref{thm:fromPKtoV} for the real
PK arrangement \((\Delta_{\mA},r_{\mA})\). The uniqueness statement in that
theorem implies that \(\mA\) and \(\mA'\) are orthogonal projectively
equivalent. Hence the map is injective, and therefore bijective.
\end{proof}

\begin{proof}[Proof of Corollary \ref{cor:classification}]
Let \(\mA\) be an irreducible Euclidean \(\vee\)-system composed of vectors of the same length, and let \((\Delta_{\mA},r_{\mA})\) be its associated weighted arrangement. By Theorem~\ref{thm:fromVtoPK}, \((\Delta_{\mA},r_{\mA})\) is a real PK arrangement. Since all the weights are equal, \(\Delta_{\mA}\) has the Hirzebruch property by \cite[Section~5.4]{panov}. By \cite[Theorem~1.1]{panov-real-hir}, \(\Delta_{\mA}\) is the mirror arrangement of the reflection group of the regular tetrahedron, cube, or icosahedron.

Conversely, for each of these three reflection arrangements, choose normal vectors of the same length and normalize them so that the sum of their squared lengths is \(3\). It is easy to check that the resulting collection of vectors is a Euclidean \(\vee\)-system. Moreover, for a fixed reflection arrangement its associated weighted arrangement is uniquely determined by this normalization. Hence Corollary~\ref{cor:bijection} shows that any irreducible Euclidean \(\vee\)-system with equal-length vectors and the same associated reflection arrangement is orthogonal projectively equivalent to this one.
\end{proof}

\section{Moduli spaces of \(\vee\)-systems, Theorem~\ref{thm:modulispace}}\label{sec:moduli}

This section identifies the moduli space of Euclidean \(\vee\)-systems in a fixed
projective class. In Proposition \ref{prop:msptf} we use Barthe's theorem, together with the uniqueness statement from Proposition~\ref{prop:uniqueness}, to parametrize normalized tight frames in a fixed projective class (up to orthogonal projective equivalence) by the relative interior of the associated base polytope. Proposition~\ref{prop:hirineq} then cuts out the Euclidean \(\vee\)-systems by the kernel of the Hirzebruch quadratic form.

\subsection{The algebraic variety of tight frames}
The set of all normalized tight frames of \(n\) vectors in \(\R^d\) has the natural structure of a real algebraic subvariety \(T_{n, d} \subset \R^{d\times n}\). We recall this following \cite[Chapter 7]{waldron}.

Let \((\alpha_i)_{i=1}^n\) be a sequence of vectors in \(\R^d\) and let
\[
A := [\alpha_1, \ldots, \alpha_n]
\]
be the \((d \times n)\)-matrix whose columns are the vectors \(\alpha_i\). Then \((\alpha_i)_{i=1}^n\) is a normalized tight frame for \(\R^d\) if and only if the matrix \(A\) satisfies
\begin{equation}\label{eq:mateq}
	A \, A^{\top} = \Id \,.
\end{equation}
Hence we identify the set of all normalized tight frames of \(n\) vectors in \(\R^d\) with 
\[
T_{n, d} = \{ A \in \R^{d \times n} \,|\, A\,A^{\top} = \Id \} \,.
\]
Thus \(T_{n,d}\) is a real algebraic subset of \(\R^{d \times n}\) cut out by degree two polynomials. It is also easy to see that \(T_{n, d}\) is a smooth compact submanifold. Indeed, Equation \eqref{eq:mateq} means that the rows of \(A\) are orthonormal, so \(T_{n,d}\) is homeomorphic to the \emph{Stiefel manifold}, i.e. the homogeneous space \(O(n)/O(n-d)\) consisting of orthonormal \(d\)-frames in \(\R^n\).

\subsection{Tight frames within a projective class}
Let \(\mA= \{\alpha_i\}_{i=1}^n\) be a vector configuration in \(\R^d\). We denote by \([\mA] \subset \R^{d\times n}\) the set of all vector configurations that are projectively equivalent to \(\mA\). Thus \([\mA]\) is the orbit of the matrix \(A = [\alpha_1, \ldots, \alpha_n]\) under the action of the group \(G= (\R^*)^n \times \GL(d)\) on \(\R^{d \times n}\) given by
\begin{equation}\label{eq:action}
((\lambda_1, \ldots, \lambda_n), F) \cdot A := [\lambda_1 F(\alpha_1), \ldots, \lambda_n F(\alpha_n)] 	
\end{equation}
where \(\lambda_i \in \R^*\) and \(F \in \GL(d)\).

Let \(K\) be the compact subgroup of \(G\) defined as \(K:= \{\pm 1\}^n \times O(d)\). The restriction of the action \eqref{eq:action} to \(K\) preserves the set of tight frames \(T_{n, d}\). Moreover, two vector configurations are orthogonal projectively equivalent if and only if they lie on the same \(K\)-orbit.

\begin{definition}
	The moduli space of normalized tight frames that are projectively equivalent to \([\mA]\) modulo orthogonal projective equivalence is defined as the set
	\begin{equation}
		\mathcal{T}_{[\mA]} := \left( T_{n,d}  \,\bigcap\, [\mA] \right) \,\big/\, K
	\end{equation}
	equipped with the quotient topology.
\end{definition}

\begin{proposition}\label{prop:msptf}
	Assume that \(\mA=\{\alpha_i\}_{i=1}^n\subset \R^d\) is an irreducible vector configuration. Let \(P(\mA)\) be the base polytope of \(\mA\). Then the map
	\[
	[\gamma_1,\ldots,\gamma_n]\longmapsto
	\left(|\gamma_1|^2,\ldots,|\gamma_n|^2\right)
	\]
	induces a homeomorphism
	\[
	\mT_{[\mA]}\cong \relin(P(\mA)).
	\]
\end{proposition}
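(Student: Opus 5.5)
The plan is to show that the squared-length map \(\mu\colon T_{n,d}\cap[\mA]\to\R^n\), \(\mu(\gamma)=(|\gamma_1|^2,\ldots,|\gamma_n|^2)\), descends to \(\mT_{[\mA]}\) and induces a continuous bijection onto \(\relin(P(\mA))\) whose inverse is also continuous. The map \(\mu\) is clearly \(K\)-invariant, since \(|\pm G\gamma_i|=|\gamma_i|\) for \(G\in\mathrm{O}(d)\). It is continuous, so it induces a continuous map \(\bar\mu\colon\mT_{[\mA]}\to\R^n\).

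\emph{Image.} Let \(\gamma\in T_{n,d}\cap[\mA]\). Its entries are nonzero, because configurations in \([\mA]\) are simple, so \(w_i=|\gamma_i|^2>0\). By Lemma~\ref{lem:traceformula}, \(\sum_i w_i=d\). Barthe's Theorem~\ref{thm:barthe}, implication (1)\(\Rightarrow\)(2), applied with \(F\) and \(\lambda_i\) realizing the projective equivalence, then gives \(w\in\relin(P(\mA))\). Conversely, for any \(w\in\relin(P(\mA))\) we have \(w_i>0\) and \(\sum_i w_i=d\). Indeed, \(P(\mA)\) is full-dimensional in \(\Sigma_d\) by Lemma~\ref{lem:basepolytope-dim}, so its relative interior lies in the open region where the inequalities \(w_i\ge0\) of Lemma~\ref{lem:basepolytope-ineq} are strict. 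Barthe's theorem, (2)\(\Rightarrow\)(1), now produces a normalized tight frame in \([\mA]\) with squared lengths \(w\). Hence \(\bar\mu\) is onto \(\relin(P(\mA))\). Injectivity is exactly Proposition~\ref{prop:uniqueness}, which uses the irreducibility of \(\mA\).

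\emph{Homeomorphism.} This is the step I expect to be the main obstacle, because \([\mA]\) is not closed in \(\R^{d\times n}\), so compactness of \(T_{n,d}\) cannot be applied directly. I would first try to show that \(\bar\mu\) is a closed map, or equivalently that it is proper onto \(\relin(P(\mA))\).

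Take a sequence \(\gamma^{(k)}\in T_{n,d}\cap[\mA]\) with \(\mu(\gamma^{(k)})\to w\in\relin(P(\mA))\). By compactness of \(T_{n,d}\) (the Stiefel manifold), a subsequence converges to some \(\gamma\in T_{n,d}\), and \(\mu(\gamma)=w\). It remains to show that \(\gamma\in[\mA]\).

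Each \(\gamma_i\) is nonzero, since \(w_i>0\). Fix the line configuration \(L_i=\R\alpha_i\) and write \(\gamma^{(k)}_i\in F_k(L_i)\). I would argue that if \(\gamma\notin[\mA]\), then the limit configuration is a degeneration in which some dependency occurs. More precisely, the matroid of \(\gamma\) is a weak image: bases of \(\gamma\) are bases of \(\mA\), because being a basis is an open condition. But \(\gamma\) has matroid polytope \(P(\gamma)\) containing \(w\) by Barthe, and \(P(\gamma)\subseteq P(\mA)\). If \(P(\gamma)\ne P(\mA)\), then some flat \(S\) of \(\gamma\) with \(\sum_{\gamma_i\in S}w_i\le\dim S\) contains vectors that are independent in \(\mA\). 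One would like this to give a tight stability inequality for \(\mA\) and hence a contradiction with \(w\in\relin\), but that conclusion does not follow immediately.

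Because this combinatorial route does not close on its own, I would instead take the fallback route: pass to the quotient via \(G\)-orbits. Write \(\gamma^{(k)}=g_k\cdot\gamma^{(1)}\) with \(g_k\in G\), and use the uniqueness in Proposition~\ref{prop:uniqueness} together with the Kempf--Ness-type convexity argument of \cite{aks}. In that argument the radial isotropic position \(F_w\) is the unique minimizer of a strictly geodesically convex function on \(\GL(d)/\mathrm{O}(d)\) depending continuously on \(w\). This yields a continuous section \(w\mapsto\) (class of the frame) on \(\relin(P(\mA))\), and the section is exactly the inverse of \(\bar\mu\), which would finish the proof.
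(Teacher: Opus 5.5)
Your bijectivity argument (positivity of the \(w_i\), Barthe's theorem in both directions, injectivity from Proposition~\ref{prop:uniqueness}) is correct and agrees with the paper. Your fallback for the inverse is also the paper's strategy: a continuous section \(w\mapsto\) frame, built from the unique minimizer of the convex potential behind Barthe's theorem.

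\textbf{The gap.} The one step that carries everything beyond bijectivity, namely that this minimizer depends continuously on \(w\), is only asserted. This is not automatic for a family of functions with unique minimizers, since a minimizer can jump or escape to infinity as the parameter moves. It needs either non-degeneracy plus the implicit function theorem, or an explicit convexity argument.

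\textbf{A second problem.} The function you invoke is not strictly convex on \(\GL(d)/\mathrm{O}(d)\). In terms of \(P=F^{\top}F\) it is \(\psi_w(P)=\sum_i w_i\log(\alpha_i^{\top}P\alpha_i)-\log\det P\). Because \(\sum_i w_i=d\), it is invariant under \(P\mapsto cP\), so its minimizer is unique only up to scale. You have to restrict to \(\det P=1\), the analogue of the paper restricting \(\varphi\) to \(\Sigma\). There strict geodesic convexity holds precisely because \(\mA\) is irreducible. The normalization costs nothing, since \(\sqrt{w_i}\,F\alpha_i/|F\alpha_i|\) is scale invariant.

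\textbf{How the paper closes it.} It minimizes \(\varphi(t)=\log\det R(t)-\sum_i w_it_i\) over \(\Sigma=\{\sum_i t_i=0\}\), and the minimizer \(t_*(w)\) is a non-degenerate critical point of \(\varphi|_{\Sigma}\). Concretely, at \(t\) the Hessian of \(\log\det R\) evaluated on \(x\) equals \(\tfrac12\sum_{i,j}(v_i,v_j)^2(x_i-x_j)^2\) for the normalized tight frame \(v_i=e^{t_i/2}R(t)^{-1/2}\hat\alpha_i\). This is positive definite on \(\Sigma\) by irreducibility. The implicit function theorem then makes \(t_*(w)\), and hence the frame \(g(w)\), smooth in \(w\).

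\textbf{An alternative in your formulation.} You could argue by trapping. Let \(P_0\) be the unique minimizer of \(\psi_{w_0}\) on \(\{\det P=1\}\). Then \(\psi_{w_0}\ge\psi_{w_0}(P_0)+\delta\) on a small geodesic sphere around \(P_0\), for some \(\delta>0\). Since \(\psi_w\) depends affinely on \(w\) with coefficients bounded on the closed ball, for \(w\) close to \(w_0\) one still has \(\psi_w>\psi_w(P_0)\) on that sphere. Geodesic convexity then forces the minimizer of \(\psi_w\) inside the ball. One of these two arguments must be written out for the proof to be complete.

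You were right to drop the properness route. The facts \(P(\gamma)\subseteq P(\mA)\) and \(w\in P(\gamma)\) are perfectly compatible in general. Moreover, excluding degenerate limits of frames in \([\mA]\) is essentially equivalent to the continuity of the inverse you are trying to prove.
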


\begin{proof}
For a matrix \(C=[\gamma_1,\ldots,\gamma_n]\in \R^{d\times n}\), define
\[
f(C)=\left(|\gamma_1|^2,\ldots,|\gamma_n|^2\right).
\]
The map \(f:\R^{d\times n}\to\R^n\) is continuous and invariant under the action
of \(K=\{\pm 1\}^n\times O(d)\). Hence, after restricting to
\(T_{n,d}\cap[\mA]\) and passing to the quotient, it induces a continuous map \(f: \mT_{[\mA]} \to \R^n\). By Theorem~\ref{thm:barthe} and Proposition~\ref{prop:uniqueness}
\[
f:\mT_{[\mA]}\longrightarrow \relin(P(\mA)).
\]
is a bijection.

We now need to prove that the inverse map is continuous.
To construct a continuous inverse, we recall some details of the proof of Barthe's theorem from \cite{aks}. 
Fix \(w\in\relin(P(\mA))\).
Following \cite[Equation (5)]{aks}, define \(\varphi:\R^n\to\R\) by
\[
\varphi(t)=\log\det R(t)- \sum_{i=1}^n w_i \, t_i,
\]
where
\[
R(t)=\sum_{i=1}^{n}e^{t_i}\hat{\alpha}_i\otimes\hat{\alpha}_i
\]
and \(\hat{\alpha}_i=\alpha_i/|\alpha_i|\). 
Note that, since \(\sum_{i=1}^n w_i=d\), we have \(\varphi(t+\lambda\mathbf 1)=\varphi(t)\)
for every \(\lambda\in\R\). We therefore restrict \(\varphi\) to the hyperplane
\[
\Sigma=\left\{t\in\R^n:\sum_{i=1}^n t_i=0\right\}.
\]
Since \(\mA\) is irreducible, the restriction of \(\varphi\) to \(\Sigma\) is
strictly convex. Indeed, \cite[Lemma~2.3]{aks} gives the convexity of
\(\log\det R(t)\), and strict convexity on \(\Sigma\) follows from \cite[Lemma~2.9]{aks} and the discussion
preceding it.
Moreover, since \(w\in\relin(P(\mA))\), \cite[Lemma~2.6]{aks}
implies that \(\varphi|_{\Sigma}\) has a minimizer. By strict convexity this
minimizer is unique; denote it by \(t_*=t_*(w)\).

Set
\[
T=R(t_*)^{-1/2}
\]
and define \(C=[\gamma_1,\ldots,\gamma_n]\) by
\[
\gamma_i
=
w_i^{1/2}
\frac{T\hat{\alpha}_i}{|T\hat{\alpha}_i|},
\qquad i=1,\ldots,n.
\]
By \cite[Proposition 2.7]{aks}, the matrix \(T\) puts the configuration
\(\{\hat{\alpha}_i\}_{i=1}^n\) in \emph{radial \(w\)-isotropic position} (\cite[Definition 1.2]{aks}). Equivalently,
\[
\sum_{i=1}^n w_i
\frac{T\hat{\alpha}_i}{|T\hat{\alpha}_i|}
\otimes
\frac{T\hat{\alpha}_i}{|T\hat{\alpha}_i|}
=
I_d.
\]
Therefore
\[
\sum_{i=1}^n \gamma_i\otimes\gamma_i=I_d.
\]
Thus \(C\) is a normalized tight frame. It is projectively equivalent to \(\mA\),
and by construction
\[
|\gamma_i|^2=w_i
\]
for every \(i\). This defines a map
\[
g:\relin(P(\mA))\longrightarrow T_{n,d}\cap[\mA],
\qquad
g(w)=C,
\]
such that \(f\circ g(w)=w\).

We now prove that \(g\) is smooth. 
By strict convexity of \(\varphi|_{\Sigma}\),
the minimizer \(t_*\in\Sigma\) is the unique critical point of \(\varphi|_{\Sigma}\). Moreover, the critical point \(t_*\) is non-degenerate. Hence the implicit function theorem implies that \(t_* = t_*(w)\) depends smoothly on \(w \in \relin(P(\mA))\).
Since \(R(t)^{-1/2}\) depends smoothly on \(t\), it follows that \(g\) is
smooth and therefore it defines a continuous inverse to \(f\).

Thus \(f:\mT_{[\mA]}\to\relin(P(\mA))\) is a continuous bijection with continuous
inverse, and hence is a homeomorphism.
\end{proof}

\subsection{Moduli spaces of Euclidean \(\vee\)-systems}\label{sec:pf-moduli-thm}
Let \(\mA= \{\alpha_i\}_{i=1}^n\) be an irreducible vector configuration in \(\R^d\). As before, we denote by \([\mA] \subset \R^{d \times n}\) the set of all vector configurations projectively equivalent to \(\mA\). 

\begin{notation}
Let \(V_{n, d} \subset \R^{d \times n}\) denote the set of all \(d \times n\) matrices whose columns form a \(\vee\)-system of \(n\) vectors in \(\R^d\). 
\end{notation}

Note that, by Lemma~\ref{lem:vstf}, \(V_{n,d} \subset T_{n,d}\). Moreover, \(V_{n,d}\) is invariant by the action of group \(K = \{\pm 1\}^n \times O(d)\) given by \eqref{eq:action}.

\begin{definition}
	The moduli space of Euclidean \(\vee\)-systems projectively equivalent to \(\mA\) up to orthogonal projective equivalence is defined as the set
	\[
	\msv = \left( V_{n,d}  \,\bigcap\, [\mA] \right) \,\big/\, K
	\]
	equipped with the quotient topology.
\end{definition}

Next, we need a simple lemma on quadratic forms.

\begin{lemma}\label{lem:quadratic-zero-locus}
Let \(Q\) be a quadratic form on \(\mathbb R^n\), and let \(H\subset \mathbb R^n\setminus 0\) be a hyperplane. Let \(U\subset H\) be an open subset, and suppose that \(Q(w)\leq 0\)
for all \(w\in U\). Then
\[
\{w\in U:Q(w)=0\}=U\cap\ker(Q),
\]
where \(\ker(Q)\) denotes the kernel, or radical, of \(Q\).
\end{lemma}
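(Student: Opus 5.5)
The plan is to use the symmetric bilinear form \(b\) associated with \(Q\), so that \(Q(w)=b(w,w)\) and \(\ker Q=\{w: b(w,\cdot)=0\}\), and to combine a first-order condition along \(H\) with the fact that \(H\) misses the origin. Throughout I read \(H\) as an affine hyperplane not containing \(0\), and I let \(H_0\) denote its direction, i.e. the linear hyperplane parallel to \(H\).

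The inclusion \(U\cap\ker Q\subset\{w\in U: Q(w)=0\}\) is immediate: if \(w\in\ker Q\) then \(Q(w)=b(w,w)=0\).

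For the reverse inclusion, take \(w\in U\) with \(Q(w)=0\).
\begin{enumerate}
\item Since \(U\) is open in \(H\) and \(Q\le 0\) on \(U\), the point \(w\) is a local maximum of \(Q|_H\).
\item For any \(v\in H_0\) and small \(t\), the point \(w+tv\) lies in \(U\). The function \(t\mapsto Q(w+tv)=Q(w)+2t\,b(w,v)+t^2Q(v)\) then has a local maximum at \(t=0\), so its derivative there vanishes: \(b(w,v)=0\) for all \(v\in H_0\).
\item We also have \(b(w,w)=Q(w)=0\).
\item Because \(0\notin H\), the vector \(w\) does not lie in \(H_0\), so \(\R^n=H_0\oplus\R w\).
\item Combining the previous steps, \(b(w,\cdot)\) vanishes on all of \(\R^n\), i.e. \(w\in\ker Q\).
\end{enumerate}

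The only point needing care is step 4, which is where the hypothesis that the hyperplane avoids \(0\) is used. Without it, vanishing of \(b(w,\cdot)\) along \(H_0\) would not control the remaining direction. In the application, \(H=\Sigma_d=\{\sum_i w_i=d\}\) with \(d\ge 2\), so this hypothesis holds automatically.
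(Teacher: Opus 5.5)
Your proof is correct and is essentially the paper's argument. The paper uses homogeneity to extend \(Q\le 0\) from \(U\) to the open cone \(\R_{>0}\cdot U\subset\R^n\), which is open precisely because \(0\notin H\); then \(w\) is an interior local maximum, so \(\nabla Q(w)=0\). You instead get \(b(w,\cdot)=0\) on the directions \(H_0\) from the local maximum on \(H\), and on the radial direction from \(b(w,w)=Q(w)=0\), which is the infinitesimal form of the same homogeneity step, with \(0\notin H\) entering through \(\R^n=H_0\oplus\R w\).
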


\begin{proof}  It is enough to prove that if \(w\in U\) and \(Q(w)=0\), then \(w\in\ker(Q)\), since the reverse implication follows from homogeneity.

Suppose that \(w\in U\) and \(Q(w)=0\). Since \(Q\) is homogeneous, there is an open subset \(\widetilde U\subset \mathbb R^n\) containing \(U\) on which \(Q\) is non-positive.  Therefore \(\nabla Q(w)=0\), which is equivalent to \(w\in\ker(Q)\).
\end{proof}

We can now prove the main result of this section.

\begin{proof}[Proof of Theorem~\ref{thm:modulispace}]
Note that \(\msv\) is naturally a subset of \(\mT_{[\mA]}\). Consider the homeomorphism \(f: \mT_{[\mA]} \to \relin(P(\mA))\) of Proposition~\ref{prop:msptf} given by \([\gamma_1, \ldots, \gamma_n] \mapsto (|\gamma_1|^2, \ldots, |\gamma_n|^2)\). It follows from Proposition~\ref{prop:hirineq} that a point \(x \in \mT_{[\mA]}\) belongs to \(\msv\) if and only if \(Q(f(x)) = 0\), where \(Q\) is the Hirzebruch quadratic form of \(\Delta_{\mA}\). 
    
It remains to identify the zero locus of \(Q\) inside \(\relin(P(\mA))\).
Since \(\mA\) is irreducible, Lemma~\ref{lem:basepolytope-dim} implies that \(\dim P(\mA)=n-1\).

Hence \(\relin(P(\mA))\) is open in the affine hyperplane \(\Sigma_d=\left\{w\in \mathbb R^n \mid \sum_{i=1}^n w_i=d\right\}\).
By Theorem~\ref{thm:barthe} , every \(w\in \relin(P(\mA))\) is realised as the squared-length vector of a normalized tight frame projectively equivalent to \(\mA\). Therefore,
by Proposition~\ref{prop:hirineq}, \(Q(w)\leq 0\)
for all \(w\in \relin(P(\mA))\). 
By Lemma \ref{lem:quadratic-zero-locus},
\[
\left\{w\in \operatorname{relin}(P(\mA))\mid Q(w)=0\right\}
=
\operatorname{relin}(P(\mA))\cap \ker(Q).
\]
Therefore, the map \(f\) restricts to a homeomorphism between \(\msv\) and \(\operatorname{relin}(P(\mA))\cap \ker(Q)\).
\end{proof}

\begin{remark}
	If \(d=2\) and \(\mH\) consists of \(n \geq 3\) lines through the origin in \(\R^2\), then the Hirzebruch quadratic form \(Q: \R^n \to \R\) of \(\mH\) is identically zero. In this case the moduli space of \(\vee\)-systems \(\msv\) agrees with the moduli space of tight frames \(\mT_{[\mA]}\), namely it is homeomorphic to \(\relin(P(\mA))\) where \(P(\mA)\) is the hypersimplex 
    \[
    \Delta(2, n) = \{w \in [0,1]^n \, \textup{ s.t. } \, \sum w_i = 2\} \,.
    \]
\end{remark}

\section{Euclidean \(\vee\)-systems and simplicial arrangements, Theorem~\ref{thm:simplicial}}\label{sec:simplicial}

In this section we prove that the arrangement associated with a Euclidean
\(\vee\)-system is simplicial. The argument is local around codimension-two
intersections of the arrangement. The two possible cases in the definition of a
\(\vee\)-system imply that adjacent facets (codimension $1$ faces) of a chamber meet at angle at most
\(\pi/2\): pairs give right angles, while planes containing at least three vectors
are controlled by the elementary geometry of tight frames in \(\R^2\). Thus the
associated arrangement is non-obtuse, and Coxeter's theorem on non-obtuse
spherical polytopes implies that it is simplicial.

\subsection{Tight frames on \(\R^2\)}

The next result is well-known and elementary. For completeness, we present a proof.

\begin{lemma}[{\cite[Exercise 2.9]{waldron}}]\label{lem:square-criterion}
	Let \((\alpha_j)_{j=1}^n\) be a sequence of vectors in \(\mathbb{R}^2\) with \(\alpha_j=(x_j,y_j)^\top\) and assume that at least one of the vectors \(\alpha_j\) is non-zero. Write \(z_j=x_j+iy_j\in\mathbb{C}\). Then \((\alpha_j)_{j=1}^n\) is a tight frame for \(\R^2\) if and only if
	\begin{equation}\label{eq:square-criterion}
	\sum_{j=1}^n z_j^2 = 0 \,.
	\end{equation}
\end{lemma}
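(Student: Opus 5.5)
We reduce the statement to a computation with the frame operator of the sequence. By Lemma~\ref{lem:tfequiv}, and since at least one \(\alpha_j\) is non-zero, \((\alpha_j)_{j=1}^n\) is a tight frame for \(\R^2\) exactly when the frame operator
\[
A=\sum_{j=1}^n \alpha_j\alpha_j^\top=\begin{pmatrix} \sum_j x_j^2 & \sum_j x_jy_j \\ \sum_j x_jy_j & \sum_j y_j^2\end{pmatrix}
\]
is a positive multiple of \(\Id\). Here the positivity of \(\lambda\) is automatic: \(\tr A=\sum_j|\alpha_j|^2>0\), so \(\lambda=\tr(A)/2>0\) whenever \(A=\lambda\Id\). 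Hence the tight frame condition is equivalent to \(A^{\circ}=0\) in the notation of~\eqref{eq:stfdec}.

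The next step is to identify the trace-free part. Write \(A^\circ = A-\tfrac{\tr A}{2}\Id\). It is a symmetric trace-free \(2\times 2\) matrix with entries
\[
A^\circ=\begin{pmatrix} a & b\\ b & -a\end{pmatrix},\qquad a=\tfrac12\sum_j (x_j^2-y_j^2),\quad b=\sum_j x_jy_j .
\]
So \(A^\circ=0\) if and only if \(\sum_j(x_j^2-y_j^2)=0\) and \(\sum_j x_jy_j=0\).

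Finally, expand \(z_j^2=(x_j^2-y_j^2)+2i\,x_jy_j\). This gives
\[
\sum_j z_j^2 = 2a+2ib .
\]
Therefore \(\sum_j z_j^2=0\) is equivalent to \(a=b=0\), i.e.\ to \(A^\circ=0\), which proves the equivalence~\eqref{eq:square-criterion}.

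As a cross-check, Lemma~\ref{lem:tracefreenorm} should agree: \(|A^\circ|^2=2(a^2+b^2)=\tfrac12\bigl|\sum_j z_j^2\bigr|^2\). There is no real obstacle here. The only point needing care is the positivity of \(\lambda\), which is exactly where the hypothesis that some \(\alpha_j\neq 0\) is used.
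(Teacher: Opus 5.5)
Your proposal is correct and follows essentially the same route as the paper: both write the frame operator in coordinates, reduce $A=\lambda\Id$ to $\sum_j(x_j^2-y_j^2)=0$ and $\sum_j x_jy_j=0$, and identify these two conditions with the real and imaginary parts of $\sum_j z_j^2$. The only difference is that you state explicitly why $\lambda>0$ is automatic, namely $\tr A=\sum_j|\alpha_j|^2>0$, which is where the non-zero hypothesis enters; the paper leaves this implicit. One small correction: that $A$ must be a positive multiple of $\Id$ is simply the definition of a tight frame, not Lemma~\ref{lem:tfequiv}.
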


\begin{proof}
	The matrix of the linear transformation \(\alpha_j \otimes \alpha_j\) is
	\[
	\alpha_j \, \alpha_j^\top
	=
	\begin{pmatrix}
	x_j^2 & x_jy_j \\
	x_jy_j & y_j^2
	\end{pmatrix}.
	\]
	By definition, the sequence \((\alpha_j)_{j=1}^n\) is a tight frame for \(\R^2\) if \(\sum_j \alpha_j \, \alpha_j^\top = \lambda \Id\). In terms of the vector components,
	\[
	\sum_{j=1}^n \left(x_j^2-y_j^2\right)=0
	\quad\text{and}\quad
	\sum_{j=1}^n x_jy_j=0.
	\]
	Since \(z_j^2=\left(x_j^2-y_j^2\right)+2ix_jy_j\), the above two equations are equivalent to the single Equation~\eqref{eq:square-criterion} in the complex plane.
\end{proof}

\begin{lemma}\label{lem:lines}
	Let \((\alpha_i)_{i=1}^n\) be a tight frame for \(\R^2\) consisting of \(n \geq 3\) pairwise non-proportional vectors. Then the lines spanned by the vectors \(\alpha_i\) divide the plane into angular sectors of angles \(<\pi/2\).
\end{lemma}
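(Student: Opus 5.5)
We will argue by contradiction, using the complex-square criterion of Lemma~\ref{lem:square-criterion}. Suppose the lines $\R\alpha_1,\ldots,\R\alpha_n$ cut out an angular sector of angle $\geq \pi/2$. Rotate the plane so that this sector is $\{\theta \in [0,\phi]\}$ with $\phi \ge \pi/2$. Choose each $\alpha_i$ up to sign so that its argument $\theta_i$ lies in $[0,\pi)$; replacing $\alpha_i$ by $-\alpha_i$ changes neither $\alpha_i\otimes\alpha_i$ nor $z_i^2$. Since no line of the arrangement meets the open sector, every $\theta_i$ lies in $\{0\}\cup[\phi,\pi)$. Because the two boundary rays of a sector are lines of the arrangement, we may take $\theta_1=0$ and $\theta_2=\phi$.

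Writing $z_j = |\alpha_j| e^{i\theta_j}$, the tight frame condition is $\sum_j |\alpha_j|^2 e^{2i\theta_j}=0$. The doubled angles satisfy $2\theta_1=0$ and $2\theta_j\in[2\phi,2\pi)\subset[\pi,2\pi)$ for $j\neq 1$. The contradiction comes from the imaginary parts: $\operatorname{Im} e^{2i\theta_j}=\sin 2\theta_j\le 0$ for all $j$, with equality only when $2\theta_j\in\{0,\pi\}$, that is, $\theta_j\in\{0,\pi/2\}$.

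Since the sum vanishes, every $\theta_j$ must lie in $\{0,\pi/2\}$. Pairwise non-proportionality then allows at most two lines, contradicting $n\ge 3$. In the sub-case $\phi=\pi/2$ the sines vanish only for the two lines at $0$ and $\pi/2$, and any third line has $\theta_j\in(\pi/2,\pi)$, giving a strictly negative contribution.

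All vectors are non-zero, since the configuration consists of pairwise non-proportional vectors spanning lines, so every weight $|\alpha_j|^2$ is strictly positive. This positivity is what makes the sign argument strict. The only step needing care is the bookkeeping of the sign choice and the rotation so that all other angles fall in $[\phi,\pi)$. The rest is the elementary sign computation above.
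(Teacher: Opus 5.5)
Your proof is correct and matches the paper's argument. Both rotate so that a line-free sector of angle at least $\pi/2$ sits in a fixed position, which puts all the $z_j^2$ in a closed half-plane (you use the lower one, the paper the upper one). Both then use $n\geq 3$ and the pairwise distinct arguments of the $z_j^2$ to force $\operatorname{Im}\sum_j z_j^2\neq 0$, contradicting Lemma~\ref{lem:square-criterion}.
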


\begin{proof} 
    Suppose  by contradiction that the lines \(L_j\) spanned by the vectors \(\alpha_j=z_j\) do not divide the plane into angular sectors of angles \(< \pi/2\). Then, after a rotation, we can assume that all of the lines are contained in the union of the closed first and third quadrants. Then the points \(z_j^2\) lie all in the closed upper half plane \(\mathrm{Im}(z) \geq 0\). Since the number of points \(z_j^2\) is \(\geq 3\), and their arguments \(\mathrm{arg}(z_j^2)\) are pairwise distinct, at least one of them must lie on the open part \(\mathrm{Im}(z) > 0\), and therefore \(\mathrm{Im}(\sum_jz_j^2) > 0\). This contradicts \eqref{eq:square-criterion}.
\end{proof}

\subsection{Non-obtuse and simplicial arrangements}

Let \(\Delta\) be an arrangement of linear hyperplanes in \(\mathbb R^d\). A \emph{chamber} of \(\Delta\) is a connected component of
\[
\mathbb R^d \setminus \bigcup_{H \in \Delta} H .
\]
The closure of every chamber is a polyhedral cone.
A full-dimensional polyhedral cone \(C \subset \mathbb R^d\) is called \emph{simplicial} if it is generated by \(d\) linearly independent vectors. 

\begin{definition}
	Let \(\Delta\) be an arrangement of linear hyperplanes in \(\mathbb R^d\). We say that \(\Delta\) is \emph{simplicial} if the closure of every chamber is a simplicial cone.
\end{definition}

Now equip \(\R^d\) with its standard Euclidean inner product.

\begin{definition}
	Let \(\Delta\) be an arrangement of linear hyperplanes in \(\mathbb R^d\).
	We say that \(\Delta\) is \emph{non-obtuse} if, for every chamber \(U\) of \(\Delta\), and every pair of adjacent facets \(F_1, F_2\) of
	\(C=\overline U\), the dihedral angle between \(F_1\) and \(F_2\) is at most \(\pi/2\).
\end{definition}

\begin{lemma}\label{lem:nonobtuse}
	Any essential non-obtuse arrangement \(\Delta\) of linear hyperplanes in \(\mathbb R^d\) is simplicial.
\end{lemma}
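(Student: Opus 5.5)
The plan is to reduce the statement to Coxeter's classical theorem on non-obtuse spherical polytopes, as announced at the start of this section. Let \(U\) be a chamber of \(\Delta\) and \(C=\overline U\). Since \(\Delta\) is essential, \(\bigcap_{H\in\Delta}H=\{0\}\). Hence \(C\) is a pointed polyhedral cone: its lineality space lies in every hyperplane of \(\Delta\), so it is \(\{0\}\). It follows that \(C\cap S^{d-1}\) is a convex spherical polytope contained in an open hemisphere. Its facets are the intersections of \(S^{d-1}\) with the facets of \(C\). The dihedral angles between adjacent facets of the spherical polytope coincide with the dihedral angles of the cone, so by hypothesis all are at most \(\pi/2\).

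Write \(C=\{x:(x,n_k)\ge 0,\ k=1,\dots,m\}\), where the \(n_k\) are the inward unit normals of the facets, chosen irredundantly. The key claim is that \(m=d\) and the \(n_k\) are linearly independent. Then \(C\) is generated by the dual basis vectors, hence is simplicial. I would prove this in three steps.

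\textbf{Step 1.} For adjacent facets \(F_k,F_l\), the non-obtuse condition gives \((n_k,n_l)\le 0\). This holds because the dihedral angle \(\theta\) satisfies \((n_k,n_l)=-\cos\theta\).

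\textbf{Step 2.} Show that \((n_k,n_l)\le 0\) for non-adjacent facets as well. The natural route is the standard argument, by induction on dimension, that in a non-obtuse convex polytope any two facets meet. For facets \(F_k,F_l\), pick a point \(p\) in the relative interior of \(F_k\) and move within \(F_k\) in the direction decreasing \((x,n_l)\). The facets of \(F_k\) are of the form \(F_k\cap F_j\) with \(F_j\) adjacent to \(F_k\). Restricting to \(H_k\), the polytope \(F_k\) is again non-obtuse: for a pair of its facets \(F_k\cap F_j\) and \(F_k\cap F_{j'}\), the projected normals satisfy
\[
\bigl(n_j-(n_j,n_k)n_k,\; n_{j'}-(n_{j'},n_k)n_k\bigr)=(n_j,n_{j'})-(n_j,n_k)(n_{j'},n_k)\le 0 .
\]
Induction then shows that any two facets of \(F_k\) meet, and one deduces that \(F_l\) is adjacent to \(F_k\) or otherwise cannot be a facet. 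Alternatively, I would simply cite Coxeter's theorem that a spherical polytope in an open hemisphere with all dihedral angles \(\le\pi/2\) is a simplex, citing Coxeter's paper on discrete groups generated by reflections.

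\textbf{Step 3.} Once \((n_k,n_l)\le 0\) holds for all \(k\ne l\), apply the standard lemma: vectors with pairwise non-positive inner products that all lie in an open half-space \(\{(x,v)>0\}\) are linearly independent. Here \(v\) is an interior point of \(C\), so \((v,n_k)>0\) for every \(k\). For the proof of the lemma, suppose \(\sum a_kn_k=0\) and split the sum as \(u=\sum_{a_k>0}a_kn_k=\sum_{a_k<0}(-a_k)n_k\). Then \(|u|^2\le 0\), so \(u=0\), and pairing with \(v\) forces all \(a_k=0\). Independence gives \(m\le d\). Pointedness of \(C\) gives \(m\ge d\), since fewer than \(d\) normals would leave a nonzero common kernel inside \(C\). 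Hence \(C\) is simplicial.

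The main obstacle is Step 2, passing from local non-obtuseness between adjacent facets to the global statement. I would first try to cite Coxeter's theorem directly, since the surrounding text explicitly invokes it. Only if a self-contained proof is wanted would I run the induction on faces sketched above.
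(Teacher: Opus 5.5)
Your main route is the paper's proof: essentiality makes the chamber closure $C$ pointed, so $C\cap S^{d-1}$ is a convex spherical polytope with the same dihedral angles, and Coxeter's theorem on spherical polytopes with all dihedral angles at most $\pi/2$ finishes the argument (once you cite it, your Steps 1 and 3 are correct but not needed). If you instead pursue the self-contained Step 2, note that it has a gap as written. Checking that $F_k$ is non-obtuse uses $(n_j,n_{j'})\le 0$ whenever $F_k\cap F_j$ and $F_k\cap F_{j'}$ are adjacent in $F_k$. That does not by itself make $F_j$ and $F_{j'}$ adjacent in $C$: at the apex of a square pyramid, two opposite triangular faces each meet a third face along edges through the apex but meet each other only at the apex. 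You would first have to prove that $C$ is simple, for example by applying the inductive hypothesis to the links of faces, and the final ``moving within $F_k$'' step would also need to be made precise.
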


\begin{proof}
Since \(\Delta\) is essential, every chamber closure is a full-dimensional pointed polyhedral cone. In particular, every chamber closure has at least \(d\) facets. Let \(C\) be the closure of a chamber of \(\Delta\). Its intersection with the unit sphere \(P = C \cap S^{d-1}\) is a convex spherical polytope.
The dihedral angles of \(P\) are exactly the dihedral angles of the cone \(C\). By a theorem of Coxeter, every convex spherical polytope whose dihedral angles are at most \(\pi/2\) is a simplex \cite[Theorem~1]{coxeter-reflections}.
\end{proof}

\subsection{Proof of Theorem~\ref{thm:simplicial}}

\begin{lemma}\label{lem:vsysnonobtuse}
	Let \(\mA = \{\alpha_i\}_{i=1}^n\) be a Euclidean \(\vee\)-system in \(\R^d\). Let \(\Delta_{\mA} = \{H_i\}_{i=1}^n\) be the associated hyperplane arrangement, where \(H_i = \alpha_i^{\perp}\). Then \(\Delta_{\mA}\) is non-obtuse.
\end{lemma}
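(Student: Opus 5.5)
The plan is to reduce the statement to a computation in the two-dimensional plane orthogonal to a codimension-two face. Fix a chamber \(U\) with closure \(C\), and two adjacent facets \(F_1 \subset H_i\) and \(F_2 \subset H_j\). Adjacent means that \(F_1 \cap F_2\) is a codimension-two face of \(C\). Let \(S\) be the codimension-two subspace spanned by this face; it is an intersection \(S = H_i \cap H_j\). Set \(\Pi = S^{\perp} = \Span\{\alpha_i,\alpha_j\}\). The dihedral angle of \(C\) along \(F_1\cap F_2\) equals the angle of the planar sector \(P_\Pi(C)\), where \(P_\Pi\) is the orthogonal projection onto \(\Pi\).

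I will describe this sector directly. Near a relative interior point \(p\) of the face \(F_1\cap F_2\), the only hyperplanes of \(\Delta_{\mA}\) passing through \(p\) are those containing \(S\), that is, the \(H_k\) with \(\alpha_k\in\Pi\). So locally \(C\) coincides with \(S\times \sigma\), where \(\sigma\) is one of the sectors of \(\Pi\) cut out by the lines \(\Pi\cap H_k = \alpha_k^{\perp}\cap\Pi\), for \(\alpha_k\in\mA\cap\Pi\). Since \(F_1,F_2\) are facets along this face, \(\sigma\) is bounded by the lines \(\alpha_i^\perp\cap\Pi\) and \(\alpha_j^\perp\cap\Pi\). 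The dihedral angle is then the angle of \(\sigma\). This local-product step is the only point needing care. I would justify it by choosing \(p\) generic in the face, so that it avoids every hyperplane not containing \(S\).

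The proof then splits into two cases according to \(|\mA\cap\Pi|\).

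\emph{Case \(|\mA\cap\Pi|=2\).} Here \(\mA\cap\Pi=\{\alpha_i,\alpha_j\}\), and \ref{it:V3} gives \((\alpha_i,\alpha_j)=0\). The lines \(\alpha_i^\perp\cap\Pi\) and \(\alpha_j^\perp\cap\Pi\) are therefore perpendicular, and the angle of \(\sigma\) is exactly \(\pi/2\).

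\emph{Case \(|\mA\cap\Pi|\geq 3\).} By \ref{it:V2'} of Lemma~\ref{lem:vstf}, \(\mA\cap\Pi\) is a tight frame for \(\Pi\) consisting of at least three pairwise non-proportional vectors. By Lemma~\ref{lem:lines}, the lines \(\R\alpha_k\) divide \(\Pi\) into sectors of angle \(<\pi/2\). The lines \(\alpha_k^\perp\cap\Pi\) are the rotations of the lines \(\R\alpha_k\) by \(\pi/2\) inside \(\Pi\). Hence they cut \(\Pi\) into congruent sectors, again of angle \(<\pi/2\), and \(\sigma\) is one of them.

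In both cases the dihedral angle is at most \(\pi/2\), so \(\Delta_{\mA}\) is non-obtuse. The main obstacle I expect is making the local identification of the dihedral angle with a sector of \(\Pi\) rigorous. The rest follows directly from \ref{it:V3} and Lemma~\ref{lem:lines}.
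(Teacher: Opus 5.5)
Your proposal is correct and follows the paper's proof. It uses the same split according to whether the codimension-two face lies in exactly two hyperplanes, where \ref{it:V3} gives a right angle, or in at least three, where the tight-frame condition on $S^\perp$ and Lemma~\ref{lem:lines} give an angle $<\pi/2$. You also make explicit two points the paper leaves implicit: the local product structure of the chamber near a generic point of the face, and the fact that the lines $\alpha_k^\perp\cap\Pi$ are the $\pi/2$-rotations of the lines $\R\alpha_k$, so that Lemma~\ref{lem:lines} controls the actual sectors.
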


\begin{proof}
	Let \(C\) be the closure of a chamber of \(\Delta_{\mA}\) and let \(F_1, F_2\) be two adjacent facets of \(C\). We want to show that the dihedral angle between \(F_1\) and \(F_2\) is at most \(\pi/2\). We divide the proof into two cases.
	
	\textbf{Case 1:} the intersection \(F_1 \cap F_2\) is contained in exactly two hyperplanes of \(\Delta_{\mA}\), say \(H_i\) and \(H_j\). In this case the normal vectors to the hyperplanes \(\alpha_i\) and \(\alpha_j\) are orthogonal, and hence the dihedral angle is \(\pi/2\).
	
	\textbf{Case 2:} the intersection \(F_1 \cap F_2\) is contained in at least three hyperplanes of \(\Delta_{\mA}\). Let \(S \subset \R^d\) be the linear span of \(F_1 \cap F_2\). 
    The dihedral angle between \(F_1\) and \(F_2\) is equal to the angle of the corresponding sector in  the orthogonal plane \(S^\perp\).
    The vectors \(\{\alpha_i \,\mid\, S \subset H_i\} = \{\alpha_i \,\mid\, \alpha_i \in S^\perp\}\) form a tight frame for the \(2\)-plane \(S^{\perp} \cong \R^2\).
    By Lemma~\ref{lem:lines}, the dihedral angle between \(F_1\) and \(F_2\) is \(< \pi/2\).
\end{proof}

We can now prove the main result of the section.

\begin{proof}[Proof of Theorem~\ref{thm:simplicial}]
    By Lemma~\ref{lem:associated-arrangement-essential} the associated arrangement \(\Delta_A\) is essential.
	By Lemma~\ref{lem:vsysnonobtuse} the arrangement \(\Delta_{\mA}\) is non-obtuse. By Lemma~\ref{lem:nonobtuse}, \(\Delta_{\mA}\) is simplicial.
\end{proof}

\begin{remark} The simplicial property of real PK line arrangements follows immediately from the existence of PK metrics on the complexification \cite{panov}. Indeed, the restriction of the PK metric to $\mathbb {RP}^2$ makes all chambers of the arrangement non-obtuse (see \cite[Theorem 4.1]{panov-real-hir} for more details). Theorem~\ref{thm:simplicial}, combined with Theorem~\ref{thm:fromPKtoV}, gives a new simpler proof of simpliciality, bypassing the theory of parabolic bundles \cite{mochizuki}.
\end{remark}

\section{Euclidean \(\vee\)-systems in \(\R^3\), Theorem~\ref{thm:classification3d}}\label{sec:3dclassification}

In the final section we apply the correspondence between Euclidean \(\vee\)-systems and real PK arrangements (Theorems~\ref{thm:fromVtoPK} and~\ref{thm:fromPKtoV}) to the rank-three classification problem. A central arrangement of planes in \(\R^3\) is projectivized to an
arrangement of lines in \(\RP^2\). We use the linear criterion of Corollary~\ref{cor:linear-pk} to test which of the known simplicial line arrangements arise from real PK arrangements (in dimension three, see also \cite{panov}). In the course of the computation, we need to solve systems of linear equations with integer coefficients in up to $37$ variables —the maximal number of lines in a sporadic simplicial arrangement. To streamline these calculations, we use SageMath.
Combining this calculation with the simpliciality Theorem~\ref{thm:simplicial} and Cuntz's completeness theorem for simplicial arrangements with at most \(27\) lines \cite{cuntz} gives the classification statement in Theorem~\ref{thm:classification3d}. 

\subsection{Linear equations for PK line arrangements}

Let \(\Delta = \{H_i\}_{i=1}^n\) be an arrangement of \(2\)-planes through the origin in \(\R^3\). We projectivize and think of \(\Delta\) as an arrangement of lines in \(\RP^2\). We denote by \(L_i = \mathbb{P}(H_i)\) the line \(L_i \subset \RP^2\) corresponding to the \(2\)-plane \(H_i \subset \R^3\).

Let \(B=(B_{ij})\) be the matrix associated with \(\Delta\) by Definition~\ref{def:Bmatrix}. Since \(d=3\), the system~\eqref{eq:linear-pk-system} becomes
\begin{equation}\label{eq:pksystem}
\sum_{j=1}^{n} B_{ij} w_j = 1 \quad \textup{for } i=1, \ldots, n \quad \textup{and} \quad \sum_{j=1}^{n} w_j = 3 \,.
\end{equation}

\begin{definition}\label{def:klt}
	Let \(\mathcal U \subset \R^n\) be the open set of all \(w \in (0,1)^n\) such that for every multiple point \(p\) of the arrangement of multiplicity \(\geq 3\) we have
	\begin{equation}\label{eq:klt}
		\sum_{i \,\mid\, p \in L_i} w_i < 2 \,.
	\end{equation}
\end{definition}

The following is the dimension-three specialization of Corollary~\ref{cor:linear-pk}; see also \cite[Section~5.3]{panov}.

\begin{lemma}\label{lem:dimasystem}
	The weighted arrangement \((\Delta, w)\) is PK if and only if \(w\) solves the system \eqref{eq:pksystem} and \(w \in \mathcal{U}\).
\end{lemma}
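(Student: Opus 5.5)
This is a direct specialization of Corollary~\ref{cor:linear-pk} to \(d=3\). The plan is to check that each ingredient of that corollary becomes the corresponding ingredient here, namely the affine space \(S\) and the open set \(\mathcal U\).

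\textbf{The linear system.} With \(d=3\) we have \((d-2)\mathbf 1=\mathbf 1\). So the affine subspace \eqref{eq:linear-pk-system}, given by \(Bw=(d-2)\mathbf 1\) and \(\sum_i w_i=d\), is exactly the system \eqref{eq:pksystem}. By Lemma~\ref{lem:Q-B}, under \(\sum_j w_j=3\) this system is equivalent to \(w\in\ker Q\).

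\textbf{Matching the open sets.} We must show that, on the hyperplane \(\sum_i w_i=3\), the set of \(w\in(0,1)^n\) satisfying \ref{it:PK2} coincides with the set \(\mathcal U\) of Definition~\ref{def:klt}. The non-zero proper flats \(L\subset\R^3\) of the arrangement have dimension \(1\) or \(2\).
\begin{itemize}
\item If \(\dim L=2\), then \(L=H_i\) for a single \(i\), since the planes are distinct. Condition \ref{it:PK2} then reads \(w_i<1\), which is already built into \(w\in(0,1)^n\).
\item If \(\dim L=1\), then \(L\) corresponds to a point \(p\in\RP^2\) lying on at least two lines, and \(r(L)=2\). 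Condition \ref{it:PK2} reads \(\sum_{i\mid p\in L_i}w_i<2\).
\end{itemize}
For a double point this sum is \(w_i+w_j<2\), which holds automatically when \(w_i,w_j<1\). So only points of multiplicity \(\ge 3\) impose genuine conditions, and these are precisely the inequalities \eqref{eq:klt}.

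\textbf{Conclusion.} The intersection \(S\cap\{\text{open set of Corollary~\ref{cor:linear-pk}}\}\) equals \(\{w \text{ solving } \eqref{eq:pksystem}\}\cap\mathcal U\). Corollary~\ref{cor:linear-pk} then gives the equivalence.

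The only point needing care is that Corollary~\ref{cor:linear-pk} rests on Proposition~\ref{prop:pk3-kernel}, which uses \(Q\le 0\) on stable weights. That is quoted from earlier results, so nothing new is required. No step is a real obstacle; the one thing to verify carefully is that double points and the single-plane flats impose nothing beyond \(w\in(0,1)^n\).
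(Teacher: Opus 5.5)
Your proposal is correct and follows the same route as the paper, which simply asserts that the lemma is the $d=3$ specialization of Corollary~\ref{cor:linear-pk}. You fill in the verification the paper leaves implicit: $(d-2)\mathbf{1}=\mathbf{1}$, and for $w\in(0,1)^n$ condition~\ref{it:PK2} reduces to the inequalities~\eqref{eq:klt} at points of multiplicity $\geq 3$, since single planes and double points impose nothing.
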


\subsection{Check of the PK condition on Gr\"unbaum's catalog}

The known simplicial line arrangements were first organised in Gr\"unbaum's catalog \cite{grunbaum}. The catalog contains three infinite families, denoted by Gr\"unbaum as \(R(0)\), \(R(1)\), and \(R(2)\). In the \(A(n,k)\)-notation of the catalog, the first family \(R(0)\) consists of the near-pencils \(A(n,0)\), \(n\geq 3\): these are arrangements of \(n\)
lines in which \(n-1\) lines pass through one point and the remaining line does not pass through that point.  The second family \(R(1)\) consists of the arrangements \(A(2m,1)\), \(m\geq 3\): these are obtained from a regular \(m\)-gon by taking the \(m\) lines containing its sides together with its \(m\) axes of symmetry.  The third family \(R(2)\) consists of the arrangements \(A(4m+1,1)\), \(m\geq 2\): these are obtained from \(A(4m,1)\) by adjoining the line at infinity.  In addition to these three infinite families, the updated Gr\"unbaum catalog \cite{cel} contains \(95\) known sporadic simplicial arrangements.
The catalog is known to be complete for arrangements with at most \(27\)
lines \cite{cuntz}, but it is not known whether the full list of sporadic examples is complete, or even whether it is finite.

Given a line arrangement \(\Delta = \{L_1, \ldots, L_n\}\) in \(\RP^2\), we say that \(w \in \R^n\) is a \emph{PK weight} for \(\Delta\) if the weighted arrangement \((\Delta, w)\) is PK.

\begin{proposition}\label{prop:computer}
	Out of the 3 infinite families and 95 sporadic examples of the known simplicial line arrangements, exactly 17 of them admit PK weights as detailed in Table~\ref{tab:pk-simplicial-arrangements} of the Appendix~\ref{app:tables}.
\end{proposition}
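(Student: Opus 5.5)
The plan is to use Lemma~\ref{lem:dimasystem}, which turns the PK property into a finite linear feasibility problem. For each arrangement in the catalog we must decide whether the affine space cut out by \eqref{eq:pksystem} meets the open set \(\mathcal U\). There are two kinds of arrangements: the \(95\) sporadic ones and the three infinite families \(R(0),R(1),R(2)\). They are handled separately.

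For each sporadic arrangement we start from the explicit data in \cite{cel}, namely the normal vectors of the lines, typically over a real number field. From these we compute the intersection lattice in rank \(3\): the points of multiplicity \(\geq 3\), the numbers \(\sigma_i\), and the matrix \(B\) of Definition~\ref{def:Bmatrix}. We then solve the integer linear system \eqref{eq:pksystem} exactly in SageMath, parametrizing the affine solution space \(S\) as \(w_0+\ker\).
\begin{itemize}
\item If \(S=\emptyset\), the arrangement has no PK weights.
\item Otherwise we decide whether \(S\cap\mathcal U\ne\emptyset\). This is a linear program with strict inequalities \(0<w_i<1\) and \(\sum_{p\in L_i}w_i<2\) at each multiple point. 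We solve it by maximizing a slack \(\varepsilon\) subject to \(w_i\geq\varepsilon\), \(w_i\leq 1-\varepsilon\), \(\sum_{i\mid p\in L_i} w_i\leq 2-\varepsilon\), in exact rational arithmetic. Feasibility holds iff the optimum has \(\varepsilon>0\).
\end{itemize}
For each positive answer we record an explicit witness weight for the table. In practice \(S\) often turns out to be a single point, which makes the check immediate.

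For the infinite families the arrangements are parametrized by \(n\) or \(m\), so we argue by hand. For each family we write \(B\) explicitly using its symmetry. For \(A(2m,1)\), for example, there are two orbits of lines (sides and axes) and the multiple points are the centre, the vertices and the edge midpoints, plus the points at infinity. We then look for solutions of \eqref{eq:pksystem}. Averaging over the dihedral symmetry group preserves both \(S\) and the convex set \(\mathcal U\), so if a PK weight exists then a symmetric one exists as well. This reduces each family to a system in two or three unknowns, one per orbit of lines. We solve this system as a function of \(m\) and check the inequalities. We expect solutions only for finitely many small \(m\); those cases coincide with or are counted among the table entries, such as the reflection arrangements \(A(6,1)\), \(A(9,1)\) and \(A(13,2)\)-type cases. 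The near-pencils \(A(n,0)\) are handled the same way: the pencil lines share one weight, and the condition at the \(n-1\)-fold point combined with \(\sum w=3\) forces a contradiction or a unique value, which we then check directly.

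The main obstacle is the infinite families. We must verify, uniformly in the parameter, that \(S\cap\mathcal U\) is empty for all but the listed members. Even after symmetry reduction this requires carefully counting the multiplicities of the points at infinity and of the centre, which differ with the parity of \(m\). We expect to split into cases \(m\) even and \(m\) odd, and for \(R(2)\) into the residue of \(m\). The sporadic cases, by contrast, are routine exact computation, with the only care needed being exact arithmetic for the strict inequalities. At the end we tally the positive answers from both parts to obtain the count of \(17\).
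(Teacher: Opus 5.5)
Your overall route is the paper's. Lemma~\ref{lem:dimasystem} turns the question into a linear feasibility problem, the sporadic arrangements are settled by an exact computation over \(\mathbb{Q}\) from the database of \cite{cel}, and the three families are treated by hand. The paper reads the matroid off the wiring diagrams rather than from normal vectors, which is immaterial, and your \(\varepsilon\)-slack LP is a correct way to handle the strict inequalities. Your symmetrization is a valid addition. Since \(S\) and \(\mathcal U\) are invariant under combinatorial automorphisms and \(\mathcal U\) is convex, proving \(S\cap\mathcal U=\emptyset\) only requires the symmetric solutions. The paper instead asserts, without showing the computation, that the full system has a unique solution for \(m\geq 5\) (resp.\ \(m\geq 4\)).

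However, your description of \(A(2m,1)\) is wrong exactly where a uniform argument is needed. Any two side lines meet in one point of \(\RP^2\) (at infinity if parallel). That point lies on the axis of the reflection exchanging them. So for \(m\geq 5\) there are also triple points where non-adjacent sides meet, every side carries \(m-1\) triple points, and its only double point is its midpoint. Using only the centre, vertices, midpoints and points at infinity gives the wrong \(\sigma_i\) and a wrong \(B\).

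With the correct count, for \(m\) odd the reduced system is \((m-2)s+a=1\), \(s+\tfrac{m-1}{2}a=1\), \(m(s+a)=3\). This gives \(s=1/m\), \(a=2/m\) unless \(m=3\). For \(m\) even the axes form two orbits: those through midpoints have \(\sigma=m/2\) and two double points, those through vertices have \(\sigma=m/2+1\) and none. Again the solution is sides \(1/m\), axes \(2/m\) unless \(m=4\). In both cases the centre carries total weight exactly \(2\), so the solution is not in \(\mathcal U\).

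For \(R(2)\) you need four orbits, not ``two or three'': sides, axes parallel to a side, the other axes, and the line at infinity. The weight of the line at infinity is then forced to be \(0\) unless \(m\in\{2,3\}\). Hence the family members with PK weights are \(A(6,1)\), \(A(8,1)\), \(A(9,1)\) and \(A(13,1)\). \(A(13,2)\), which you cite, is sporadic.

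Finally, symmetrization only decides existence. For \(A(6,1)\), \(A(8,1)\) and \(A(9,1)\) the symmetric slice of \(S\cap\mathcal U\) is lower-dimensional. It is a segment in a \(3\)-simplex, a segment in a \(2\)-simplex, and a \(2\)-dimensional slice of the bipyramid, respectively. So neither the slice nor a single witness weight gives the shapes recorded in Table~\ref{tab:pk-simplicial-arrangements}. For these members you must solve the unsymmetrized system and describe \(S\cap\mathcal U\), as the paper does. Likewise, for the sporadic entries you need \(S\cap\mathcal U\) itself, not just one feasible point.
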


\begin{proof}
	Given a simplicial line arrangement \(\Delta\) as above,
	we use the criterion of Lemma~\ref{lem:dimasystem} to find the possible PK weights. That is, we first determine the affine space \(S \subset \R^n\) of solutions to the inhomogeneous system of \(n+1\) linear equations \eqref{eq:pksystem} and then we consider the intersection of \(S\) with the open set \(\mathcal{U}\) defined as the intersection of all \(w \in (0,1)^n\) such that the sum of the weights at the multiple points is \(< 2\), as in Definition~\ref{def:klt}.
	
	\emph{Infinite families.} All of the three infinite families admit solutions to the equations \eqref{eq:pksystem}. For the near-pencil \(A(n,0)\), if the first \(n-1\) lines meet, then
	\[
	S = \left\{  \sum_{i=1}^{n-1} w_i = 2 \, \textup{ and } \, w_n=1 \right\} \,.
	\]
	Clearly \(S \cap \mathcal{U} = \emptyset\), so none of these arrangement is PK. 
	
	Consider now the arrangements \(A(2m, 1)\) consisting of a regular \(m\)-gon and its axes of symmetry. For \(m \geq 5\) the system \eqref{eq:pksystem} has a unique solution with weights \(w_i =1/m\) for the sides and \(w_j=2/m\) for the axes. Clearly \(w \notin \mathcal{U}\), as the sum of the weights at the intersection of the axes is \(2\). The cases \(m=3\) and \(m=4\) are dealt with separately. Direct calculation shows that the affine subspace \(S\) is \(3\)-dimensional for \(A(6,1)\) and \(2\)-dimensional for \(A(8,1)\) and the intersection with the open set \(\mathcal{U}\) is a \(3\)-simplex and a \(2\)-simplex respectively.
	
	Consider the arrangements \(A(4m+1,1)\), obtained from \(A(4m, 1)\) by adding a line at infinity. For \(m \geq 4\) the system \eqref{eq:pksystem} admits a unique solution \(w\) with weights same as for \(A(4m,1)\) and weight zero for the line at infinity; clearly \(w \notin \mathcal{U}\). The cases \(m=2\) and \(3\) are dealt separately by direct calculation using a computer (see below). For \(m=2\) the space \(S\) is \(3\)-dimensional and the intersection \(S\cap \mathcal{U}\) is the interior of a \(3\)-dimensional polyhedron combinatorially isomorphic to a triangular bipyramid. For \(m=3\), \(S\) is one-dimensional and the intersection \(S \cap \mathcal{U}\) is an open interval.
	
	\emph{Sporadic examples.} To analyse the 95 sporadic examples we use the \texttt{cn\_hyperarr} package for SageMath from \cite{cel}. This package contains a database for the wiring diagrams for all the known simplicial line arrangements. From the wiring diagrams we can calculate the matroid (or multiple point intersections) of the arrangement. Then we solve the system of equations \eqref{eq:pksystem} and for the arrangements that have solutions \(S\) we check if the affine subspace \(S\) intersects the open set \(\mathcal{U}\) using linear programming methods of SageMath. The jupyter notebook with the calculations is available in 
    \begin{center}
        \url{https://github.com/MdeBorbon/PK-SIMPLICIAL.git}
    \end{center}
    All computations in the notebook are performed exactly over \(\mathbb{Q}\); in particular, the feasibility tests do not rely on floating-point approximations or numerical tolerances.
\end{proof}

\subsection{Classification of 3d Euclidean \(\vee\)-systems of order at most 27}

\begin{proof}[Proof of Theorem~\ref{thm:classification3d}]
	Let \(\mA\) be an irreducible Euclidean \(\vee\)-system in \(\R^3\) with
	\(|\mA|\leq 27\), and let \((\Delta_{\mA},r_{\mA})\) be its associated weighted
	arrangement. By Theorem~\ref{thm:simplicial}, the arrangement \(\Delta_{\mA}\)
	is simplicial. By Theorem~\ref{thm:fromVtoPK}, the weighted arrangement
	\((\Delta_{\mA},r_{\mA})\) is a real PK arrangement. Thus \(\Delta_{\mA}\)
	is a simplicial arrangement of \(|\mA|\leq 27\) lines in \(\RP^2\) admitting
	PK weights.

	By Cuntz's completeness theorem \cite{cuntz}, every simplicial arrangement of
	at most \(27\) lines in \(\RP^2\) occurs in the catalog considered above. Applying
	Proposition~\ref{prop:computer}, the arrangements admitting
	PK weights in this range are precisely those appearing in
	Table~\ref{tab:pk-simplicial-arrangements} of the Appendix~\ref{app:tables} with at most \(27\) lines.

	The correspondence displayed in Table~\ref{tab:pk-veselov} of the Appendix~\ref{app:tables} identifies these
	arrangements with the rank-three \(\vee\)-systems in the Schreiber--Veselov
	catalog \cite{schreiberveselov}. Hence the projective class of \(\mA\), and
	therefore \(\mA\) itself up to orthogonal projective equivalence by
	Theorem~\ref{thm:fromPKtoV}, is one of the \(\vee\)-systems listed in
	\cite{schreiberveselov}. This proves the theorem.
\end{proof}

\subsection{Remarks}

\begin{remark} According to \cite[Proposition 5.3]{panov}, any PK arrangement \(\{L_1, \ldots, L_n\}\subset \CP^2\) has the following property: for every \(L_j\), there exists a point in \(\CP^2\) that belongs to all lines \(L_k\) such that \(L_k\cap L_j\) is a double point of the arrangement. This criterion is quite effective for showing that a given line arrangement is not PK. For example, roughly half—precisely \(46\)—of the sporadic simplicial arrangements in Gr\"unbaum's list violate this property and can therefore be shown not to be PK without any calculation, simply by inspecting a picture of the arrangement. This property also holds for complex \(\vee\)-systems of rank $3$, as can be proved by exactly the same method as in \cite[Proposition 5.3]{panov}.
\end{remark}

\begin{remark}
The linear equations~\eqref{eq:linear-pk-system} also arise more generally for complex hyperplane arrangements supporting flat logarithmic connections of the type studied in \cite[Equation~(1.1)]{dbp-pkc}. This includes the connections associated with complex \(\vee\)-systems \cite{feigin-veselov-geometryVsystem} and Dunkl systems \cite{chl}. More precisely, if the arrangement consists of \(n\) hyperplanes in \(\C^d\), then the traces \(w_i\) of the residues along the hyperplanes satisfy the \(n\) homogeneous linear equations displayed in \cite[Equation~(8.11)]{dbp-pkc}, one for each hyperplane. Together with the normalization \(\sum_{i=1}^n w_i=d\), these are exactly the equations in the overdetermined system~\eqref{eq:linear-pk-system}. The requirement that this system admit a solution rules out the vast majority of combinatorial types of arrangements.
\end{remark}

\appendix
\section{Simplicial line arrangements admitting PK weights and the corresponding rank-three $\vee$-systems}\label{app:tables}

\begin{table}[ht]
	\centering
	\begin{tabular}{c c}
		\toprule
		Gr\"unbaum & PK weights \\
		\midrule
		\(A(6,1)\)  & \(3\)-simplex \\
		\(A(7,1)\)  & \(2\)-simplex \\
		\(A(8,1)\)  & \(2\)-simplex \\
		\(A(9,1)\)  & triangular bipyramid \\
		\(A(10,2)\) & interval \\
		\(A(10,3)\) & point \\
		\(A(11,1)\) & interval \\
		\(A(13,1)\) & interval \\
		\(A(13,2)\) & interval \\
		\(A(13,3)\) & point \\
		\(A(15,1)\) & point \\
		\(A(16,3)\) & point \\
		\(A(17,2)\) & point \\
		\(A(17,4)\) & point \\
		\(A(19,1)\) & point \\
		\(A(19,3)\) & point \\
		\(A(31,1)\) & point \\
		\bottomrule
	\end{tabular}
	\caption{Simplicial arrangements admitting PK weights.}
	\label{tab:pk-simplicial-arrangements}
\end{table}

\begin{table}[ht]
	\centering
	\begin{tabular}{c c}
		\hline
		Gr\"unbaum  & \(\vee\)-system \\
		\hline
		\(A(6,1)\)  & \(A_3(c_1,c_2,c_3)\) \\
		\(A(7,1)\)  & \(D_3(t,s)\) \\
		\(A(8,1)\)  & \((E_6,A_3)\) \\
		\(A(9,1)\)  & \(B_3(c_1,c_2,c_3,\gamma)\) \\
		\(A(10,2)\) & \((AB_4(t),A_1)_2\) \\
		\(A(10,3)\) & \((E_6,A_1^3)\) \\
		\(A(11,1)\) & \((AB_4(t),A_1)_1\) \\
		\(A(13,1)\) & \(G_3(t)\) \\
		\(A(13,2)\) & \(F_3(t)\) \\
		\(A(13,3)\) & \((E_7,A_1^2\times A_2)\) \\
		\(A(15,1)\) & \(H_3\) \\
		\(A(16,3)\) & \((E_8,A_1\times A_4)\) \\
		\(A(17,2)\) & \((E_8,A_1^2\times A_3)\) \\
		\(A(17,4)\) & \((E_8,A_2\times A_3)\) \\
		\(A(19,1)\) & \((E_8,A_2^2\times A_1)\) \\
		\(A(19,3)\) & \((E_8,A_1^3\times A_2)\) \\
		\(A(31,1)\) & \((H_4,A_1)\) \\
		\hline
	\end{tabular}
	\caption{The simplicial arrangements admitting PK weights and the corresponding rank-three \(\vee\)-systems in Schreiber--Veselov catalog \cite{schreiberveselov}.}
	\label{tab:pk-veselov}
\end{table}

\begin{figure}[htbp]
    \centering
    \includegraphics[scale=0.5]{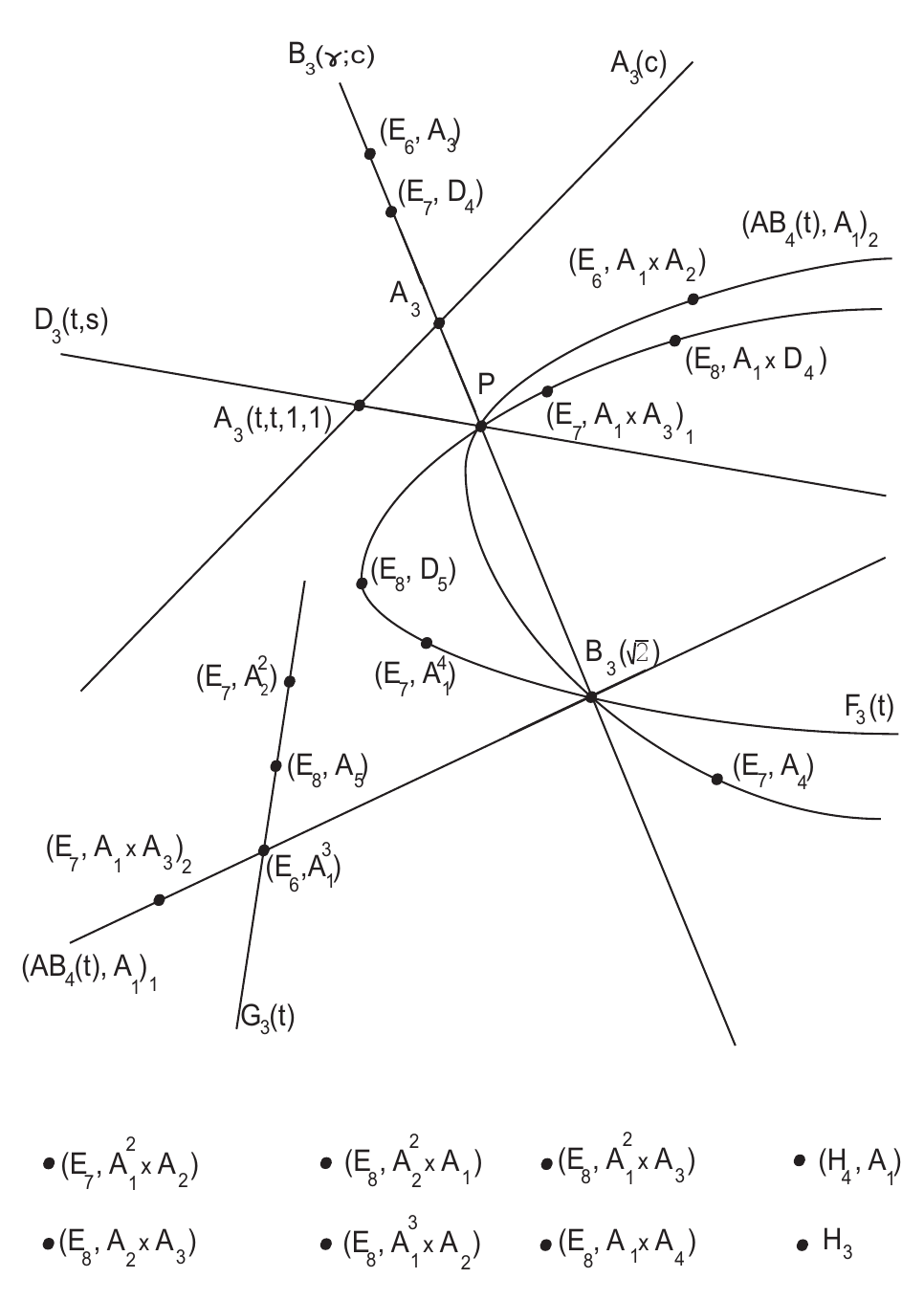}
    \caption{Schematic map of all known rank-three irreducible \(\vee\)-systems from Feigin--Veselov \cite{feigin-veselov-geometryVsystem}.}
    \label{fig:FV_list}
\end{figure}

\end{document}